\numberwithin{equation}{section}
\def\ga{\mathfrak{a}}
\def\gf{\mathfrak{f}}
\def\mfg{\mathfrak{g}}
\def\gh{\mathfrak{h}}
\def\gk{\mathfrak{k}}
\def\gl{\mathfrak{l}}
\def\gp{\mathfrak{p}}
\def\gs{\mathfrak{s}}
\def\gS{\mathfrak{S}}
\def\Ad{{\rm Ad}}
\def\trace{\mathrm{Tr}\,}
\newcommand{\res}{\mathrm{res}}
\newcommand{\Wn}{W (\fg_n, \fh_{n})}
\renewcommand{\Im}{\mathop{\rm Im} }
\def\C{\mathbb{C}}
\def\H{\mathbb{H}}
\def\N{\mathbb{N}}
\def\R{\mathbb{R}}
\def\Z{\mathbb{Z}}
\newcommand{\id}{\mathrm{id}}
\newcommand{\gL}{\Lambda }
\newcommand{\SO}{\mathrm{SO}}
\newcommand{\Sp}{\mathrm{Sp}}
\newcommand{\Spin}{\mathrm{Spin}}
\newcommand{\SU}{\mathrm{SU}}
\newcommand{\U}{\mathrm{U}}
\newcommand{\diag}{\mathrm{diag}}
\newcommand{\Tr}{\mathrm{Tr}\, }
\newcommand{\so}{\mathfrak{so}}
\newcommand{\lsp}{\mathfrak{sp}}
\newcommand{\lsl}{\mathfrak{sl}}
\def\cA{\mathcal{A}}
\def\cF{\mathcal{F}}
\def\cH{\mathcal{H}}
\def\cI{\mathcal{I}}
\def\cS{\mathcal{S}}
\newcommand{\Br}{B_r}
\newcommand{\Brc}{\overline{B_r(0)}}
\newcommand{\Exp}{\mathrm{Exp}}
\newcommand{\PW}{\mathrm{PW}}
\renewcommand{\Re}{\mathrm{Re}}
\newtheorem{theorem}[equation]{Theorem}
\newtheorem{lemma}[equation]{Lemma}
\newtheorem{corollary}[equation]{Corollary}
\newtheorem{proposition}[equation]{Proposition}
\newtheorem{remark}[equation]{Remark}
\newcommand{\fa}{\mathfrak{a}}
\newcommand{\fg}{\mathfrak{g}}
\newcommand{\fh}{\mathfrak{h}}
\newcommand{\fk}{\mathfrak{k}}
\newcommand{\fl}{\mathfrak{l}}
\newcommand{\fm}{\mathfrak{m}}
\newcommand{\fn}{\mathfrak{n}}
\newcommand{\fs}{\mathfrak{s}}
\newcommand{\fz}{\mathfrak{z}}
\newcommand{\rS}{\mathrm{S}}
\newcommand{\rI}{\mathrm{I}}
\def\sideremark#1{\ifvmode\leavevmode\fi\vadjust{\vbox to0pt{\vss% the remark
 \hbox to 0pt{\hskip\hsize\hskip1em%                          will appear only
\vbox{\hsize2cm\tiny\raggedright\pretolerance10000%          on the side
 \noindent #1\hfill}\hss}\vbox to8pt{\vfil}\vss}}}%
\begin{document}

\title[Weyl Group Invariants]{Weyl Group Invariants and Application to
Spherical Harmonic Analysis on Symmetric Spaces}

\author{Gestur \'{O}lafsson}
\address{Department of Mathematics, Louisiana State University, Baton Rouge,
LA 70803, U.S.A.}
\email{olafsson@math.lsu.edu}
\thanks{The research of G. \'Olafsson was supported by NSF grants
DMS-0402068 and DMS-0801010}

\author{Joseph A. Wolf}
\address{Department of Mathematics, University of California, Berkeley,
CA 94707, U.S.A.}
\email{jawolf@math.berkeley.edu}
\thanks{The research of J. A. Wolf was partially supported by NSF grant
DMS-0652840}

\subjclass[2000]{43A85, 53C35, 22E46}
\keywords{Invariant polynomials; Injective and projective limits;
Spherical Fourier transform; Paley-Wiener theorem}

\date{}

\begin{abstract}
Polynomial invariants are fundamental objects in analysis on
Lie groups and symmetric spaces. Invariant differential
operators on symmetric spaces are described by Weyl group
invariant polynomial.
In this article we give a simple criterion that
ensure that the restriction of invariant polynomials
to subspaces is surjective.  We apply our criterion to
problems in Fourier analysis on projective/injective limits,
specifically to theorems of Paley--Wiener type.
\end{abstract}
\maketitle

\section*{Introduction} \label{sec0}
\setcounter{equation}{0}

\noindent
Invariant polynomials play a fundamental role in several branches of
mathematics. A well known example related to the topic
of this article comes from the representation theory of
semisimple Lie groups and from the related analysis on Riemannian symmetric
spaces. Let $G$ be a connected semisimple real Lie group
with Lie algebra $\fg$. Let $\fh\subset \fg$ be a Cartan subalgebra.
Then the algebra of $G$--invariant polynomials on $\fg$ is isomorphic
to the center of the universal enveloping algebra of $\fg$. Also, the
restriction of invariant polynomials to $\fh$ is an isomorphism onto the
algebra of Weyl group invariant polynomials on $\fh$. Replace $G$ by
a Riemannian symmetric space $M=G/K$ corresponding to a
Cartan involution $\theta$ and replace $\fh$  by
a maximal abelian subspace $\fa$ in $\fs:=\{X\in\fg\mid \theta (X)=-X\}$. Then
the Weyl group invariant polynomials correspond to the invariant
differential operators on $M$. They are therefor closely related to
harmonic analysis on $M$, in particular to the determination of
the spherical functions on $M$.

In general we need $\fa \subset \fh$ and $\theta\fh = \fh$.  For this, of
course, we need only choose $\fh$ to be a Cartan subalgebra of the centralizer
of $\fa$.

Denote by $W(\fg,\fh)$ the Weyl group of $\fg$ relative to $\fh$,
$W(\fg,\fa)$ the ``baby'' Weyl group of $\fg$ relative to $\fa$,
$W_\fa (\fg,\fh)=\{w\in W(\fg,\fh)\mid w(\fa )= \fa\}$, $\rI(\fg )$
the algebra of $W(\fg,\fh)$--invariant polynomials on $\fh$ and
finally $\rI(\fa )$ the algebra of $W(\fg,\fa)$--invariant polynomials on
$\fa$.   It is well known for all semisimple
Lie algebras that $W_\fa (\fg,\fh)|_{\fa}=W (\fg,\fa)$.  In \cite{He1964}
Helgason showed for all classical semisimple Lie algebras
that $\rI(\fh)|_\fa = \rI(\fa)$. As an application, this shows that in most
cases the invariant differential operators on $M$ come from elements
in the center of the universal enveloping algebra of $\fg$.

In this article we discuss similar restriction problems
for the case of pairs of Lie groups $G_n\subset G_k$ and
symmetric spaces $M_n\subset M_k$. We use the above notation with
indices ${}_n$ respectively ${}_k$. The first question is
about restriction from $\fh_k$ to $\fh_n$. It is clear that neither
does the group $W_{\fh_n}(\fg_k,\fh_k)$ restrict to $W (\fg_n,\fh_n)$ in
general, nor is $\rI(\fh_k)|_{\fh_n}=\rI (\fh_n)$. To make this work, we
introduce the notion that $\fg_k$ is a prolongation of $\fg_n$ using
the Dynkin diagram of simple Lie classical Lie algebras.
In terms of restricted
roots, that means that either the rank and restricted root system
of the large and the small symmetric spaces are the same, or
roots are added to the left end of the Dynkin diagram.  The result is
that both symmetric spaces have the same type of root system but the
larger one can have higher rank. In that case the restriction
result above holds for all cases except when the restricted root systems
are of type $D$. This includes
all the cases of classical Lie groups of the same type. If $G_k$ is
a prolongation of $G_n$, then $W_{\fh_n}(\fg_k,\fh_k)|_{\fh_n}=W (\fg_n,\fh_n)$ and
$\rI(\fh_n)|_{\fh_n}=\rI (\fh_n)$, except in the case of simple algebras
of type $D$, where a parity condition is needed, i.e., we have to extend
the Weyl group by incorporating odd sign changes for simple factors of type $D$.
The resulting finite group is denoted by $\widetilde{W} (\fg,\fh)$. Then,
in all classical cases, the
$\widetilde{W}(\fg_k,\fh_k)$-invariant polynomials restrict to
$ \widetilde{W} (\fg_n,\fh_n)$-invariant polynomials. We also
show that $\widetilde{W}_\fa(\fg,\fh)|_\fa = \widetilde{W}(\fg,\fa)$.

When a compact symmetric space $M_k$ is a prolongation of another, say
$M_n$, we prove surjectivity for restriction of
smooth functions supported in a ball of a given radius $r$ on $M_k$ to
smooth functions supported in a ball of radius $r$ on $M_n$, resulting is
a corresponding restriction result on their Fourier transform spaces.
Using results on conjugate and cut locus of compact symmetric spaces
we show that the radius of injectivity in the symmetric spaces forming
a direct system, related by prolongation, is constant.  If $R$ is that
radius then the condition on the support size $r$ is given in terms of
$R$, thus constant for the direct system, and this allows us to
carry the finite dimensional Paley--Wiener theorem to the limit.

The classical Paley--Wiener Theorem describes the growth of the
Fourier transform of a function $f \in C_c^\infty(\R^n)$ in terms of the
size of its support.  Helgason and Gangolli generalized it to Riemannian
symmetric spaces of noncompact type, Arthur extended it to semisimple Lie
groups, van den Ban and Schlichtkrull made the extension to pseudo-Riemannian
reductive symmetric spaces, and finally \'Olafsson and Schlichtkrull
worked out the corresponding result for compact Riemannian symmetric spaces.
Here we extend their result to a class of infinite dimensional
Riemannian symmetric spaces, the classical direct limits compact symmetric
spaces.  The main idea is to combine the results of \'Olafsson and Schlichtkrull
with Wolf's results on direct limits
$\varinjlim M_n$ of Riemannian symmetric spaces and limits of the
corresponding function spaces on the $M_n$.

Of course compact support in the Paley--Wiener Theorem is
irrelevant for functions on a compact
symmetric space, and there one concentrates on the radius of the support.
The Fourier transform space is interpreted as the parameter space for
spherical functions, the linear dual space of the complex span of the
restricted roots.  When we pass to direct limits it is crucial that
these ingredients be properly normalized.  In order to do this
we introduce the notion of propagation for pairs of root systems, pairs
of groups, and pairs of symmetric spaces.

In Section \ref{sec1} we recall some basic facts concerning Paley--Wiener
theorems, their behavior under finite symmetry groups, and restrictions
of Paley--Wiener spaces.  In order to apply this to direct systems of
symmetric spaces, in Section \ref{sec2} we introduce the notion of
propagation and examine the corresponding invariants explicitly for each
type of root system.  The main result, Theorem \ref{th-AdmExt},
summarizes the facts on restriction of Weyl groups for
propagation of symmetric spaces. The proof is by case by case consideration
of each simple root system.

In Section \ref{sec3} we prove surjectivity of Weyl group invariant
polynomials for propagation of symmetric spaces.  As mentioned above,
this is analogous to Helgason's result on restriction of invariants from
the full Cartan $\fh$ of $\fg$ to the Cartan $\fa$ of $(\fg,\fk)$.

In Section \ref{sec4} we apply our results on Weyl group invariants to
Fourier analysis on Riemannian symmetric spaces of noncompact type.  The
main result is Theorem \ref{th-ProjLimNonCompact}, the Paley--Wiener
Theorem for classical direct limits of those spaces.
As indicated earlier, a $\Z_2$ extension of the Weyl group is needed in
case of root systems of type $D$. The extension can be realized by
an automorphism $\sigma$ of the of the Dynkin diagram. We show that
there exists an automorphism $\widetilde{\sigma}$ of $G$ or a double cover
such that $d\widetilde{\sigma}|_\fa =\sigma$ and the spherical function
with spectral parameter $\lambda$ satisfies
$\varphi_\lambda (\widetilde{\sigma}(x))
=\varphi_{\sigma'(\lambda )}(x)$.

In Section \ref{sec5} we set up the basic surjectivity of the direct
limit Paley--Wiener Theorem for the classical sequences $\{SU(n)\}$,
$\{SO(2n)\}$, $\{SO(2n+1)\}$ and $\{Sp(2n)\}$.  The main tool is
Theorem \ref{re-inj-rad}, the calculation of the injectivity radius; it
turns out to be a simple constant ($\sqrt{2}\,\pi$ or $2\pi$) for each
of those series.  The main result is Theorem \ref{th-CknSurjective}, which
sets up the projective systems of functions used in the Paley--Wiener
Theorem for $SU(\infty)$, $SO(\infty)$ and $Sp(\infty)$.  All this is needed
when we go to limits of symmetric spaces.  Theorem \ref{l-inductiveSystemOfRep},
the main result of Section \ref{sec6},
sets up the sequence of function spaces corresponding to a direct system
$\{M_n\}$ of compact Riemannian symmetric spaces in which $M_k$ propagates
$M_n$ for $k \geqq n$. On the way we show that for compact symmetric
spaces the map
$Q: C^\infty (G)^G\to C^\infty (G/K)^K$, $Q(f)(xK)=\int_K f(xk)\, dk$, which
is surjective, is in fact surjective as a map $C^\infty_r(G)^G \to C_r^\infty (G/K)^K$,
where the subscript ${}_r$ denotes the size of the support.

Finally, in Section \ref{sec7}, we relate the spherical Fourier transforms
for the sequence $\{M_n\}$, show how the injectivity radii remain constant
on the sequence, and prove the Paley--Wiener Theorem \ref{t: PW}, and a
Paley--Wiener Theorem \ref{th-PWcompactII}, for direct limits
$M_\infty = \varinjlim M_n$ of compact Riemannian symmetric spaces in which
$M_k$ propagates $M_n$ for $k \geqq n$.  Along the way we obtain a stronger
form, Theorem \ref{stronger}, of one of the key ingredients in the proof
of the surjectivity.

Our discussion of direct limit Paley--Wiener Theorems involves function
space maps that have a somewhat complicated relation \cite{W2009}
to the $L^2$ theory of \cite{W2008a}.  This is
discussed in Section \ref{sec8}, where we compare our maps with the
partial isometries of \cite{W2008a}.

\section{Polynomial Invariants and Restriction of
Paley-Wiener spaces}\label{sec1}\setcounter{equation}{0}

\noindent
Let $E\cong \R^n$ be a finite dimensional Euclidean space
and $E_\C\cong \C^n$ its complexification. Denote by
$C_r^\infty ( E )$ the space of smooth functions on
$E$ with support in a closed ball $\Brc$ of radius $r>0$. Denote
by $\PW_r(E_\C)$ the space of holomorphic function on $E_\C$ with
the property that for each $n\in\Z^+$ there exists
a constant $C_n>0$ such that
\begin{equation}\label{eq-DefPW}
|F(z)|\leqq C_n (1+|z|)^{-n}e^{r|\Im z |}\, .
\end{equation}

Let $\langle x,y\rangle_E = \langle x,y\rangle =x\cdot y$ denote the inner
product on $E$ as well as its $\C$--bilinear extension to $E_\C$.
Denote by $\mathrm{O}(E)$ the orthogonal group of $E$
with respect to this inner product. If $T:E\to E$ is $\R$-linear
then we will also view $T$ as a complex linear map on $E_\C$.
If $w \in \mathrm{O}(E)$ and $f$ is a function on $E$ or $E_\C$
then $L_w(f)$ denotes the (left) translate of $f$ by $w$,
$L_w(f)(x)=f(w^{-1}x)$.  If $G$ is a subgroup of $\mathrm{O}(E)$
and $L_wf=f$ for all $w\in G$ then we say that
$f$ is $G$--invariant.  For a $G$-module $V$ set
\begin{equation}\label{eq-invariants}
V^G=\{v\in V\mid g\cdot v=v\text{ for all } g\in G\}\, .
\end{equation}
In particular $\PW_r(E_\C)^G$ and $C_r^\infty (E)^G$ are well defined.

We normalize
the Fourier transform on $E$ as
\begin{equation}\label{eq-FourierTransform}
\cF_E(f)(\lambda )=\widehat{f}(\lambda )
=(2\pi )^{- n/2}\int_E f(x)e^{-i\lambda \cdot x}\, dx.
\end{equation}
The Paley--Wiener Theorem says that
$\mathcal{F}_E :C_r^\infty (E)^G\to \PW_r(E_\C)^G$
is an isomorphism.

Denote by $\rS (E)$ the algebra of polynomial functions on $E$ and
$\rI (E)=\rI_G (E)=\rS (E)^G$ the algebra of $G$--invariant invariant
polynomial functions on $E$.

{}From now on we assume that $F$ is another
Euclidean space and that $E\subseteqq F$.
We will always assume that the inner products on $E$ and
$F$ are chosen so that $\langle x,y\rangle_E=\langle x,y\rangle_F$
for all $x,y\in E$. Furthermore, if
$W(E)$ and $W(F)$ are closed subgroups of the respective orthogonal groups
acting on $E$ and $F$, then $$W_E(F)=\{w\in W(F)\mid w(E)=E\}$$
is the subgroup of $W(F)$ that maps $E$ into $E$.
We will always assume that
$W(E)$ and $W(F)$ are generated by reflections $s_\alpha : v\mapsto
v-\frac{2(\alpha ,v)}{(\alpha ,\alpha )}\alpha$, for $\alpha$ in a root system
in $E$ respectively $F$. However, it should be pointed out that the Cowling result, see
below, holds for arbitrary closed subgroup of $\mathrm{O} (E)$ respectively $\mathrm{O} (F)$.

We recall the following theorems of Cowling \cite{cowling}
and Rais \cite{Rais} in the form that we need in the
sequel.  Cowling states his result for finite groups but his argument is
valid for compact groups.

\begin{theorem}[Cowling]\label{th-cowling} The restriction map
$\PW_r(F_\C)^{W_E(F)} \to \PW_r(E_\C)^{W_E (F)|_{E_\C}}$, given by
$F\mapsto F|_{E_\C}$, is surjective.
\end{theorem}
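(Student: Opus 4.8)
The plan is to transfer the problem across the Fourier transform to the ``space side,'' where surjectivity becomes a concrete extension problem for compactly supported smooth functions. Write $F = E \oplus E^\perp$ orthogonally with respect to the common inner product, and let $d = \dim E^\perp$; if $d = 0$ the restriction map is the identity, so I assume $d \geq 1$. For $f \in C_r^\infty(F)$ define the fiber integral $\pi f \in C^\infty(E)$ by $\pi f(x_1) = \int_{E^\perp} f(x_1, x_2)\, dx_2$. Since orthogonal projection $F \to E$ does not increase norms, $\pi$ carries $C_r^\infty(F)$ into $C_r^\infty(E)$. A direct computation with \eqref{eq-FourierTransform}, splitting $x = (x_1, x_2)$ and setting the $E^\perp$-frequency to zero, gives $\mathcal{F}_E(\pi f) = (2\pi)^{d/2}\,(\mathcal{F}_F f)|_{E_\C}$. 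Thus, by the Paley--Wiener theorem, the restriction map $\Phi \mapsto \Phi|_{E_\C}$ is, up to the nonzero constant $(2\pi)^{d/2}$ and the isomorphisms $\mathcal{F}_E, \mathcal{F}_F$, identified with $\pi$, and it suffices to prove that $\pi \colon C_r^\infty(F) \to C_r^\infty(E)$ is surjective.

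Next I would install the invariance. Each $w \in W_E(F)$ preserves $E$ and hence its orthogonal complement $E^\perp$, so $w$ is block diagonal and a change of variables shows $\pi(L_w f) = L_{w|_E}(\pi f)$; that is, $\pi$ is $W_E(F)$-equivariant and sends $W_E(F)$-invariants to $W_E(F)|_E$-invariants. Because $W_E(F)$ is a closed, hence compact, subgroup of $\mathrm{O}(F)$, averaging reduces the invariant statement to the plain one: given $g \in C_r^\infty(E)^{W_E(F)|_E}$, I pick any $f \in C_r^\infty(F)$ with $\pi f = g$ and replace it by $\tilde f = \int_{W_E(F)} L_w f\, dw$. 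Orthogonality keeps $\Brc$ invariant, so $\tilde f \in C_r^\infty(F)^{W_E(F)}$, and equivariance together with the invariance of $g$ gives $\pi \tilde f = \int_{W_E(F)} L_{w|_E} g\, dw = g$. Since $\mathcal{F}_E$ intertwines the two $\mathrm{O}(E)$-actions, this matches the invariance groups in the statement, and the theorem follows once $\pi$ is shown surjective.

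The heart of the matter is to produce, for given $g \in C_r^\infty(E)$, a smooth $f$ supported in $\Brc \subset F$ with prescribed fiber integral $g$; the obstacle is that the available fiber $\{x_2 : |x_1|^2 + |x_2|^2 \leq r^2\}$ shrinks to a point as $|x_1| \to r$, so a naive product $g(x_1)\phi(x_2)$ would enlarge the support radius. I would fix a nonnegative $\phi \in C_c^\infty(E^\perp)$ with $\int \phi = 1$ supported in $\{|x_2| \leq 1\}$, set $\rho(x_1) = \sqrt{r^2 - |x_1|^2}$ for $|x_1| < r$, and define
\[
f(x_1, x_2) = g(x_1)\, \rho(x_1)^{-d}\, \phi\bigl(\rho(x_1)^{-1} x_2\bigr), \qquad |x_1| < r,
\]
with $f = 0$ for $|x_1| \geq r$. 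For each $x_1$ the substitution $x_2 \mapsto \rho(x_1)\, x_2$ gives $\int_{E^\perp} f(x_1, x_2)\, dx_2 = g(x_1)$, while $f(x_1,x_2) \neq 0$ forces $|x_2| \leq \rho(x_1)$, i.e. $|(x_1,x_2)| \leq r$, so $\Supp f \subseteq \Brc$.

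The remaining, and genuinely delicate, point is smoothness across the sphere $|x_1| = r$, where the rescaling factors blow up. Here I would use that $g \in C_r^\infty(E)$ forces $g$ and all its derivatives to vanish to infinite order on $|x_1| = r$: being smooth on $E$ and identically zero outside the closed ball, every derivative of $g$ has vanishing Taylor expansion at each boundary point, whence $\partial^\beta g(x_1) = O\bigl((r - |x_1|)^N\bigr)$ for all $N$ as $|x_1| \to r^-$. On the other hand, each differentiation of $f$ produces at worst a fixed negative power of $\rho(x_1) \sim (r - |x_1|)^{1/2}$ multiplying derivatives of $g$ and of $\phi$. The infinite-order flatness of the $g$-factor therefore dominates every such power, so $f$ and all its partial derivatives extend continuously by $0$ across $|x_1| = r$, giving $f \in C^\infty(F)$ with support in $\Brc$. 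This establishes surjectivity of $\pi$ and, by the first two steps, the theorem. I expect this last verification --- balancing the blow-up of the $\rho$-rescaling against the infinite-order vanishing of $g$ --- to be the main technical obstacle.
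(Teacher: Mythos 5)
Your proof is correct, but note that the paper does not actually prove this statement: it is quoted from Cowling's note \cite{cowling} (with the remark that his finite-group argument extends to compact groups), so there is no in-paper argument to match yours against. What you have supplied is a complete, self-contained proof, and its route reverses the logic of Section \ref{sec1}: the paper deduces the surjectivity of the fiber-integration map on $C^\infty_r$ spaces (Corollary \ref{co-Cowling}) \emph{from} the Paley--Wiener-side surjectivity via $\cF_E,\cF_F$, whereas you prove the surjectivity of $\pi$ on $C_r^\infty$ directly --- by the anisotropic rescaling $f(x_1,x_2)=g(x_1)\rho(x_1)^{-d}\phi\bigl(\rho(x_1)^{-1}x_2\bigr)$ with $\rho(x_1)=\sqrt{r^2-|x_1|^2}$ --- and then transport it back through the Fourier transform to obtain the statement about $\PW_r$. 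The identity $\cF_E(\pi f)=(2\pi)^{d/2}(\cF_F f)|_{E_\C}$ (consistent with the constant $c=(2\pi)^{(n-m)/2}$ in Corollary \ref{co-Cowling}) and the averaging step over the compact group $W_E(F)$ are both fine, and the two delicate points are handled correctly: a function in $C_r^\infty(E)$ vanishes to infinite order on the sphere $|x_1|=r$, uniformly (Taylor expansion at the nearest boundary point, where all derivatives vanish), while each differentiation of $f$ costs only a bounded power of $\rho^{-1}\sim(r-|x_1|)^{-1/2}$ on the support $|x_2|\leqq\rho(x_1)$, so every derivative of $f$ extends continuously by $0$ across $|x_1|=r$ and the standard flat-gluing lemma gives $f\in C^\infty(F)$ with $\Supp f\subseteqq\Brc$. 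The payoff of your approach is that it yields Corollary \ref{co-Cowling} as an intermediate step rather than a consequence and makes the normalizing constant transparent; the cost is the boundary-regularity verification, which the paper avoids entirely by outsourcing the theorem to \cite{cowling}.
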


\begin{theorem}[Rais]\label{th-rais}
Let $P_1,\ldots ,P_n$ be a basis for $\rS (F)$ over
$I_{W(F)}(F)$. If $F\in\PW_r(F_\C)$
there exist  $\Phi_1,\ldots ,\Phi_n\in\PW_r(F_\C)^{W(F)}$ such
that
$$F=P_1\Phi_1+ \ldots + P_n\Phi_n\, .$$
\end{theorem}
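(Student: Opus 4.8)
Write $W:=W(F)$ and $I:=I_{W(F)}(F)$. By Chevalley's theorem $\rS(F)$ is free over $I$ of rank $|W|$, so $n=|W|$, and the $P_i$ are in particular a basis of $L:=\mathrm{Frac}(\rS(F))$ over $K:=\mathrm{Frac}(I)=L^{W}$, a Galois extension with group $W$ acting by $(w\cdot p)(z)=p(w^{-1}z)$. My plan is to construct the $\Phi_i$ by Cramer's rule and then verify, in order, their $W$-invariance, their holomorphy, and the growth estimates \eqref{eq-DefPW}. First I would form the $|W|\times|W|$ matrix $M(z)=\bigl(P_i(w^{-1}z)\bigr)_{w\in W,\,1\le i\le n}$ and the column $v(z)=\bigl(F(w^{-1}z)\bigr)_{w\in W}$. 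Since $\{P_i\}$ is a $K$-basis of the separable extension $L/K$, the determinant $D:=\det M$ satisfies $D^{2}=\pm\,\mathrm{disc}(P_1,\dots,P_n)\neq0$, so $D$ is a nonzero polynomial; moreover the rows indexed by $w$ and $s_\alpha w$ coincide when $z\in H_\alpha$ (because $(s_\alpha w)^{-1}z=w^{-1}s_\alpha z=w^{-1}z$ there), so $D$ vanishes on every reflection hyperplane and $\{D=0\}$ is exactly the complexified reflection arrangement. On the dense open set $\{D\neq0\}$ I then define $\Phi_i:=D_i/D$, where $D_i$ is $\det M$ with its $i$-th column replaced by $v$; reading off the row $w=1$ gives the identity $F=\sum_i P_i\Phi_i$ there.

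Next I would prove $W$-invariance. Replacing $z$ by $w_0^{-1}z$ permutes the rows of the system $v=M\Phi$ by $w\mapsto w_0w$ and permutes the entries of $v$ the same way; since a common row permutation of both sides does not change the solution, the tuple $(\Phi_i(w_0^{-1}z))_i$ solves exactly the Cramer system that defines $(\Phi_i(z))_i$. Uniqueness of the solution where $D\neq0$ forces $\Phi_i(w_0^{-1}z)=\Phi_i(z)$, so each $\Phi_i$ is a $W$-invariant meromorphic function whose polar set lies in the reflection arrangement.

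The main obstacle is holomorphy: I must show the apparent poles along the $H_\alpha$ cancel, and here the hypothesis that $F$ is entire is essential (invariance alone does not suffice, since invariant functions such as $1/\langle\alpha,\cdot\rangle^{2}$ have poles). Because the polar set of a meromorphic function is pure of codimension one, it is enough to check that $\Phi_i$ extends holomorphically across a generic point $z_0$ of each $H_\alpha$. There I would localize in the normal coordinate $t=\langle\alpha,z\rangle$, on which $s_\alpha$ acts by $t\mapsto -t$, and decompose everything under the fixed-point-free involution $w\mapsto s_\alpha w$ of $W$: one computes $v(s_\alpha z)$ and $M(s_\alpha z)$ to be $v(z)$ and $M(z)$ permuted by this involution, so splitting into its $(\pm1)$-eigenspaces block-diagonalizes the Cramer system into an even block that is invertible at $t=0$ and an odd block, and shows that the odd parts of $v$ and of $D_i,D$ pick up a common factor $t$. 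These factors cancel, so $\Phi_i$ is holomorphic near $z_0$; as this holds along every $H_\alpha$, the invariant meromorphic $\Phi_i$ has empty polar set and is entire. I expect this cancellation to be the delicate step and would carry it out carefully.

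Finally I would verify the type-$r$ estimates. Since each $w\in W\subset\mathrm{O}(F)$ is real-orthogonal, $|\Im(w^{-1}z)|=|\Im z|$ and $|w^{-1}z|=|z|$, so every translate $F\circ w^{-1}$ again obeys \eqref{eq-DefPW} with the same constants; expanding the determinant $D_i$ then bounds $|\Phi_i|=|D_i/D|$ by $C_N(1+|z|)^{-N}e^{r|\Im z|}$ on any region bounded away from the hyperplanes. To propagate the bound across the hyperplanes I would exploit that $\Phi_i$ is already known to be entire and apply the one-variable Paley--Wiener division lemma along the root direction $\alpha$: an entire function of one complex variable of exponential type $\le r$ with rapid decay that is divisible by a polynomial has quotient again of exponential type $\le r$ with rapid decay. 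This yields $\Phi_i\in\PW_r(F_\C)^{W}$ and completes the decomposition $F=P_1\Phi_1+\dots+P_n\Phi_n$.
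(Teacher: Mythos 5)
The paper does not actually prove this statement: it is quoted verbatim from Rais \cite{Rais} (just as Theorem \ref{th-cowling} is quoted from Cowling), so there is no in-paper argument to compare yours against. Judged on its own, your Cramer's-rule reconstruction is sound and is essentially the classical route: view $\{P_i\}$ as a basis of $L=\mathrm{Frac}(\rS(F))$ over $K=L^{W}$, solve the linear system $\sum_i P_i(w^{-1}z)\Phi_i(z)=F(w^{-1}z)$, $w\in W$, and show the Cramer quotients $\Phi_i=D_i/D$ are entire, $W$-invariant, and of Paley--Wiener type. The invariance argument (a common row permutation of $M$ and $v$ leaves the solution unchanged) and the reduction of the growth estimates to the one-variable division lemma (the same \cite[Lemma 5.13]{He1994} the paper itself uses in Lemma \ref{isoPWspaces}) are both correct.

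Two places in the holomorphy step need more care than your sketch suggests. First, you assert that $\{D=0\}$ is \emph{exactly} the reflection arrangement, but your row-coincidence argument only gives one containment; the reverse containment follows, e.g., because for $z$ off the arrangement the orbit $Wz$ has $|W|$ distinct points, polynomials separate them, and the $P_i$ generate $\rS(F)$ over $\rI_{W(F)}(F)$, so the columns of $M(z)$ span $\C^{|W|}$. Second, and more importantly, along a generic point of $H_\alpha$ the determinant $D$ vanishes to order $|W|/2$, not $1$: all $|W|/2$ row-pairs $\{w,s_\alpha w\}$ collapse, so each of the $|W|/2$ antisymmetrized rows contributes a factor of $t=\langle\alpha,z\rangle$. (Indeed $D=c\prod_{\beta>0}\langle\beta,\cdot\rangle^{|W|/2}$, which one gets from the degree count $\sum_i\deg P_i=|W|N/2$ for a homogeneous basis together with the fact that a change of basis multiplies $D$ by a unit of $\rI_{W(F)}(F)$, i.e.\ a nonzero constant.) Your cancellation therefore must produce the factor $t^{|W|/2}$ in $D_i$ as well --- which it does, since the same row-pairs of the modified matrix coincide on $H_\alpha$ because $F((s_\alpha w)^{-1}z)=F(w^{-1}z)$ there; this is precisely where entirety of $F$ enters, as an odd holomorphic function of $t$ is divisible by $t$ --- and you must also know that $D/t^{|W|/2}$ is generically nonzero on $H_\alpha$ (your ``even block invertible'' claim), which is exactly the statement that the order of vanishing is not higher; the displayed identity for $D$ settles it. Also note the system does not literally block-diagonalize (there is no natural action on the column index $i$); what you really use is only that the odd rows are divisible by $t$. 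With these points made explicit, the argument is complete.
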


If $W_E(F)|_E=W(E)$ then Cowling's Theorem implies that the restriction map
\[\PW_r(F_\C)^{W_E(F)} \to \PW_r(E_\C)^{W_E (F)|_{E_\C}}\, ,\quad F\mapsto F|_{E_\C}\, ,\]
is surjective. If we, on the right hand side, replace $W_E(F)$ by the full
group $W(F)$ then the subspace of invariant functions becomes smaller, so one would
in general not expect the restriction map to remain surjective. The following
theorem gives a sufficient condition for that to happen.

\begin{theorem}\label{th-IsoPW1} Let the notation be as above. Assume that
\begin{enumerate}
\item $W_E(F)|_E =W(E)$
\item The restriction map $\rI_{W(F)} (F)\to \rI_{W(E)}(E)$ is surjective.
\end{enumerate}
Then the restriction map
$$
\PW_r(F_\C)^{W(F)}\to \PW_r(E_\C)^{W(E)}\, \text{, given by }
F\mapsto F|_{E_\C}\, ,
$$
is surjective.
\end{theorem}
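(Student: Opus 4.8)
The plan is to start from an arbitrary $g\in\PW_r(E_\C)^{W(E)}$ and manufacture a $W(F)$-invariant extension in two movements: first produce \emph{some} Paley--Wiener extension to $F_\C$ of the same exponential type, then repair its failure to be $W(F)$-invariant by feeding the Rais decomposition into hypothesis (2). The two hypotheses enter at opposite ends of the argument: hypothesis (1) is what lets Cowling's theorem start the extension, and hypothesis (2) is what lets us finish it.

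First I would invoke hypothesis (1) to note that $W(E)=W_E(F)|_E$, so that $g$ lies in $\PW_r(E_\C)^{W_E(F)|_{E_\C}}$. Cowling's Theorem \ref{th-cowling} then furnishes some $\tilde g\in\PW_r(F_\C)^{W_E(F)}$ with $\tilde g|_{E_\C}=g$. This $\tilde g$ is invariant only under $W_E(F)$, not under all of $W(F)$, and removing that defect is the crux of the proof.

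Next I would apply Rais's Theorem \ref{th-rais} to $\tilde g$. Fixing a basis $P_1,\dots,P_n$ of $\rS(F)$ over $\rI_{W(F)}(F)$, write $\tilde g=\sum_i P_i\Phi_i$ with $\Phi_i\in\PW_r(F_\C)^{W(F)}$, and restrict to $E_\C$ to get $g=\sum_i (P_i|_{E_\C})(\Phi_i|_{E_\C})$. The gain from passing through Rais is that each coefficient $\Phi_i$ is $W(F)$-invariant, hence $W_E(F)$-invariant, so by hypothesis (1) the restrictions $\phi_i:=\Phi_i|_{E_\C}$ already lie in $\PW_r(E_\C)^{W(E)}$, i.e.\ in the image of the very map we are studying. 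To eliminate the non-invariant polynomial factors $p_i:=P_i|_{E_\C}$, I would average the identity $g=\sum_i p_i\phi_i$ over $W(E)$: since $g$ and each $\phi_i$ are $W(E)$-invariant and $L_w(p_i\phi_i)=(L_w p_i)\phi_i$, this yields
\begin{equation*}
g=\sum_i \bar p_i\,\phi_i,\qquad \bar p_i:=\frac{1}{|W(E)|}\sum_{w\in W(E)} L_w p_i\in\rI_{W(E)}(E).
\end{equation*}

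Finally I would use hypothesis (2) to lift each invariant polynomial $\bar p_i$ to some $\hat p_i\in\rI_{W(F)}(F)$ with $\hat p_i|_E=\bar p_i$, and set $G:=\sum_i \hat p_i\,\Phi_i$. Since multiplication by a polynomial preserves the estimate \eqref{eq-DefPW} (it only shifts the polynomial prefactor) and both $\hat p_i$ and $\Phi_i$ are $W(F)$-invariant, we get $G\in\PW_r(F_\C)^{W(F)}$; moreover $G|_{E_\C}=\sum_i(\hat p_i|_{E_\C})(\Phi_i|_{E_\C})=\sum_i\bar p_i\phi_i=g$, so $G$ is the required invariant extension and the restriction map is surjective. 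The step I expect to carry the weight is the third paragraph: Rais converts the mere existence of an extension into a decomposition whose scalar coefficients are honest $W(E)$-invariant restrictions of $W(F)$-invariant Paley--Wiener functions, and it is precisely for transporting the resulting invariant polynomial coefficients back up to $F$ that hypothesis (2) is tailor-made.
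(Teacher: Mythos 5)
Your proposal is correct and follows essentially the same route as the paper's proof: Cowling's theorem via hypothesis (1) to get a $W_E(F)$-invariant extension, Rais's decomposition restricted to $E_\C$, and hypothesis (2) to lift the invariant polynomial coefficients back to $F$. Your explicit averaging over $W(E)$ is exactly the justification behind the paper's terser ``we can assume that $P_j|_{E_\C} \in \rI_{W(E)}(E)$.''
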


\begin{proof} It follows from assumption (1) that if $F\in\PW_r(F_\C)^{W(F)}$ then
$F|_E\in \PW_r(E_\C)^{W(E)}$.

Now, let $G\in \PW_r(E_\C)^{W(E)}$. By Theorem \ref{th-cowling} and
assumption (1) there
exists a function $\widetilde{G}\in \PW_r(F_\C)^{W_E(F)}$ such that
$\widetilde{G}|_{E_\C}=G$. By Theorem \ref{th-rais}, there exist
$\Phi_1,\ldots ,\Phi_n\in \PW_r(F_\C)^{W(F)}$ and
polynomials $P_1,\ldots ,P_n\in \rS (F)$ such that
$$\widetilde G =P_1\Phi_1+\ldots + P_n\Phi_n\, .$$
But then
$$G=\widetilde{G}|_{E_\C}=(P_1|_{E_\C})(\Phi_1|_{E_\C})+\ldots + (P_n|_{E_\C})(\Phi_n|_{E_\C})\, .$$
As $W (E) =W_E(F)|_E$, $G$ is $W (E)$--invariant and the functions $\Phi_j$ are $W(F)$--invariant,
we can assume that $P_j|_{E_\C} \in \rI_{W (E)}(E)$.
By assumption (2) there exists $Q_j\in \rI_{W(F)} (F)$ such that $Q_j|_{E_\C}=P_j|_{E_\C}$. But then
$$\Phi:=Q_1\Phi_1+\ldots +Q_r\Phi_r\in \PW_r(F_\C)^{W(F)}
\text{ satisfies } \Phi|_{E_\C}=G\, .$$
Hence the restriction map is surjective.
\end{proof}

\begin{remark}{\em
We note that (1) above does not imply (2) in Theorem \ref{th-IsoPW1}.
Let $G/K$ be a
semisimple symmetric space of the noncompact type. Let $\fg =\fk \oplus \fs$
be the corresponding Cartan decomposition. Thus, there exists an involution
$\theta : \fg \to \fg$ such that $\fk$, the Lie algebra of $K$, is
the $+1$-eigenspace of $\theta$ and $\fs = \{X\in \fg\mid \theta (X) = -X\}$.
Let $\fa$ be a maximal abelian subspace of $\fs$ and let
$\Sigma (\fg,\fa)$
be the set of roots of $\fa$ in $\fg$. We write $W(\fg,\fa )$  for
the corresponding
Weyl group. Let $\fh$ be a Cartan subalgebra of $\fg$ containing $\fa$. Then
$\fh = \fh_k\oplus \fa$, where $\fh_k=\fh\cap \fk$. Let $\Delta (\fg_\C,\fh_\C)$
be the roots of $\fh_\C$ in $\fg_\C$ and let $W (\fg,\fh)$ be the corresponding
Weyl group. Then by \cite[p. 366]{He1984},
\[W (\fg,\fa)=\{w|_{\fa}\mid w\in W (\fg,\fh) \text{ such that }  w(\fa )=\fa\}
=W_{\fa}(\fg,\fh)|_\fa.\]
But for some of the exceptional algebras (2) can fail; see \cite{He1992} for exact statement.
} \hfill $\diamondsuit$
\end{remark}

Let $n=\dim E$ and $m=\dim F$. Denote by $\cF_E$ respectively $\cF_F$ the
Euclidean Fourier transforms on $E$ and $F$.
The following map $C$ was denoted by $P$ in \cite{cowling}.

\begin{corollary}[Cowling]\label{co-Cowling} Let the assumptions be as above.
Then the  map
$$
C: C_r^\infty (F)^{W(F)}\to C_r^\infty (E)^{W(E)}\, ,\text{ given by }
C f(x)=\int_{E^\perp} f(x,y)\, dy,
$$
is surjective.
\end{corollary}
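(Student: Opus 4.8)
The plan is to transport the assertion to the Fourier-transform side, where it reduces directly to Theorem~\ref{th-IsoPW1}. The bridge is the elementary observation that, under the Euclidean Fourier transforms, the fiber-integration map $C$ corresponds to restriction from $F_\C$ to $E_\C$, up to an explicit constant. Writing $m=\dim F$, $n=\dim E$ and decomposing $w=x+y$ with $x\in E$, $y\in E^\perp$, so that $\lambda\cdot w=\lambda\cdot x$ for $\lambda\in E$, a Fubini computation gives
\[
\cF_E(Cf)=(2\pi)^{(m-n)/2}\,(\cF_F f)|_{E_\C}\qquad (f\in C_r^\infty(F)).
\]
Thus $C$ is intertwined with the restriction map $\res\colon \PW_r(F_\C)\to \PW_r(E_\C)$ by the factor $(2\pi)^{(m-n)/2}$, and the corollary becomes the Fourier-analytic shadow of the surjectivity already established in Theorem~\ref{th-IsoPW1}.

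First I would check that $C$ actually maps $C_r^\infty(F)^{W(F)}$ into $C_r^\infty(E)^{W(E)}$. Smoothness and the support bound are immediate: differentiating under the integral sign is legitimate on a compact support region, and if $f$ vanishes off $\Brc\subset F$ then $f(x,y)=0$ whenever $|x|>r$, so $Cf$ is supported in the ball of radius $r$ in $E$. For invariance, each $w\in W_E(F)$ is orthogonal and preserves $E$, hence preserves $E^\perp=(wE)^\perp$ and factors as $w=(w|_E,\,w|_{E^\perp})$; the measure-preserving change of variables $y\mapsto w|_{E^\perp}y$ together with the $W(F)$-invariance of $f$ shows $Cf\circ(w|_E)=Cf$, and assumption (1), $W_E(F)|_E=W(E)$, then makes $Cf$ genuinely $W(E)$-invariant. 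For the surjectivity itself, let $g\in C_r^\infty(E)^{W(E)}$ be given. By the Paley--Wiener theorem on $E$ we have $\cF_E g\in \PW_r(E_\C)^{W(E)}$; by Theorem~\ref{th-IsoPW1} there is $\Phi\in \PW_r(F_\C)^{W(F)}$ with $\Phi|_{E_\C}=(2\pi)^{-(m-n)/2}\,\cF_E g$; by the Paley--Wiener theorem on $F$ we set $f:=\cF_F^{-1}\Phi\in C_r^\infty(F)^{W(F)}$. The displayed identity then yields $\cF_E(Cf)=(2\pi)^{(m-n)/2}\Phi|_{E_\C}=\cF_E g$, and injectivity of $\cF_E$ gives $Cf=g$.

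The genuinely substantive input, surjectivity of the restriction of Paley--Wiener spaces, is already supplied by Theorem~\ref{th-IsoPW1} (which rests on the theorems of Cowling and Rais together with assumptions (1) and (2)), so once that is in hand the corollary is essentially a change of viewpoint via the Fourier transform. Accordingly, I do not expect a serious obstacle; the only points demanding care are the derivation of the intertwining identity and the bookkeeping of the normalizing constant $(2\pi)^{(m-n)/2}$, which must be inserted correctly when lifting $\cF_E g$ so that it cancels after applying $\cF_E\circ C$. The mildly technical part is confirming that the orthogonal splitting $F=E\oplus E^\perp$ behaves well under every $w\in W_E(F)$, which is exactly what lets the invariance of $Cf$ be upgraded from $W_E(F)|_E$ to $W(E)$ via hypothesis (1).
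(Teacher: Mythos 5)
Your proposal is correct and follows essentially the same route as the paper's proof: lift $\cF_E g$ (suitably normalized) to a $W(F)$-invariant function in $\PW_r(F_\C)$ via Theorem \ref{th-IsoPW1}, take the inverse Fourier transform on $F$, and conclude from the Fubini identity $\cF_E\circ C=(2\pi)^{(m-n)/2}\,(\cF_F\,\cdot\,)|_{E_\C}$ together with injectivity of $\cF_E$ on compactly supported functions. Your only addition is the explicit check that $C$ maps $C_r^\infty(F)^{W(F)}$ into $C_r^\infty(E)^{W(E)}$, which the paper leaves implicit; your normalizing constant is the consistent one.
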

\begin{proof} We follow \cite{cowling}. Let $c=(2\pi )^{(n-m)/2}$ Let $g\in C_r^\infty (E)^{W(E)}$ and $G=\cF_E(g)$ its
Fourier transform in $\PW_r(E_\C)^{W(E)}$. Let $F\in \PW_r(F_\C)^{W(F)}$ be
such that $F|_E=c^{-1} G$ and let $f$ be the inverse Fourier transform of
$F$. We claim that $g=Cf$. For that we note
\begin{eqnarray*}
\cF_E(g)(\lambda )&=&c^{-1} F(\lambda ,0)\\
&=&c^{-1} (2\pi )^{-m/2} \int_{E_x}\int_{E^\perp_y} f(x,y)e^{-i\lambda\cdot x}\, dxdy\\
&=&(2\pi)^{-n/2}\int_E C f(x)e^{-i\lambda\cdot x}\, dx\\
&=&\cF_E( C f)(\lambda )\, .
\end{eqnarray*}
The claim follows now, as obviously $Cf$ has compact support and the Fourier transform
is injective on the space of compactly supported functions.
\end{proof}

\begin{theorem}\label{th-projectiveLimitPW}
Let $\{E_j\}$ be a sequence of Euclidean spaces such that
$E_j\subseteqq E_{j+1}$ and such that hypotheses $(1)$ and $(2)$
of {\rm Theorem \ref{th-IsoPW1}} are satisfied for
each pair $(E_j,E_k)$, $k\geqq j$.  Let
$P_{k,j} :\PW_r(E_{k,\C})^{W(E_{k})}\to
\PW_r (E_{j,\C})^{W(E_j)}$ be the restriction map. Then
$\{\PW_r (E_{j,\C})^{W(E_j)}, P_j\}$ is a projective system and
$\varprojlim  \{\PW_r (E_{j,\C})^{W(E_j)}\} \not= \{0\}$.
\end{theorem}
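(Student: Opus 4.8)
The plan is to treat the two assertions in turn: first that the restriction maps genuinely assemble into a projective system indexed by $\Z^+$, and then that this system carries a nonzero compatible family. The one substantive input for both steps is the surjectivity supplied by Theorem \ref{th-IsoPW1}; everything else is formal.

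First I would check that each transition map is well defined and surjective. For $k\geqq j$ the pair $(E_j,E_k)$ satisfies hypotheses $(1)$ and $(2)$, so by the opening observation in the proof of Theorem \ref{th-IsoPW1} restriction carries $\PW_r(E_{k,\C})^{W(E_k)}$ into $\PW_r(E_{j,\C})^{W(E_j)}$. Here one uses the standing compatibility of inner products, so that for $z\in E_{j,\C}$ the quantity $|\Im z|$ is unchanged under $E_{j,\C}\hookrightarrow E_{k,\C}$ and the estimate \eqref{eq-DefPW} is preserved, while $W(E_j)$-invariance of the restriction comes from hypothesis $(1)$. Theorem \ref{th-IsoPW1} then gives that $P_{k,j}$ is surjective. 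The projective-system axioms are immediate from the fact that these are honest restriction-of-functions maps: $P_{j,j}=\id$, and for $j\leqq \ell\leqq k$ one has $P_{\ell,j}\circ P_{k,\ell}=P_{k,j}$, since restricting from $E_k$ to $E_j$ through $E_\ell$ agrees with restricting directly. Thus $\{\PW_r(E_{j,\C})^{W(E_j)},P_{k,j}\}$ is a projective system with surjective transition maps, the consecutive maps $P_{j+1,j}$ generating all the others.

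It remains to produce a nonzero compatible family. I would first note that each term is nonzero: choose any nontrivial smooth radial bump function supported in $\Brc\subset E_j$; being radial it is automatically $W(E_j)$-invariant, as $W(E_j)\subset \mathrm{O}(E_j)$ preserves the norm, and its Euclidean Fourier transform is then a nonzero element of $\PW_r(E_{j,\C})^{W(E_j)}$ by the Paley--Wiener Theorem. Now I build an element of the inverse limit by recursion along the tower: pick a nonzero $x_1\in\PW_r(E_{1,\C})^{W(E_1)}$, and given $x_j$ use surjectivity of $P_{j+1,j}$ to select $x_{j+1}$ with $P_{j+1,j}(x_{j+1})=x_j$. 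The resulting sequence $(x_j)$ is compatible, hence defines a nonzero element of $\varprojlim\{\PW_r(E_{j,\C})^{W(E_j)}\}$.

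I do not expect a genuine obstacle once Theorem \ref{th-IsoPW1} is in hand. The only point requiring attention is that the recursion needs every consecutive transition to be surjective in order to continue, and this is exactly what hypotheses $(1)$ and $(2)$ for each pair guarantee; the construction then uses nothing beyond dependent choice over the countable index set $\Z^+$.
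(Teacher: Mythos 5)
Your proof is correct and follows the same route as the paper: verify the projective system axioms, observe each term is nonzero, and recursively lift a nonzero element using the surjectivity furnished by Theorem \ref{th-IsoPW1}. The only difference is that you spell out the nonvanishing of each $\PW_r(E_{j,\C})^{W(E_j)}$ via a radial bump function, a point the paper leaves implicit.
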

\begin{proof} It is clear that $\{\PW_r (E_{j,\C})^{W(E_j)}, P_{n,j}\}$
is a projective system.
Fix $j$ and let $F\in \PW_r (E_{j,\C})^{W(E_j)}$, $F\not= 0$. Recursively
choose $F_k \in \PW_r(E_{k,\C})^{W(E_k)}$, $k\geqq j$ such that
$F_{k+1}|_{E_{k,\C}}=F_k$.  Then the sequence $\{F_k\}$ is a non-zero element of
$\varprojlim \PW_r (E_{j,\C})^{W(E_j)}$.
\end{proof}

\begin{theorem} Let $\{E_j\}$ be a sequence of Euclidean spaces such that
$E_j\subseteqq E_{j+1}$ and such that hypotheses {\rm (1)} and  {\rm (2)}
of  {\rm Theorem \ref{th-IsoPW1}} are satisfied for
each pair $(E_j,E_k)$, $k\geqq j$. Define $C_{k,j} :C^\infty_r(E_{k})^{W(E_{k})}\to
C^\infty_r (E_j)^{W(E_j)}$ by
\[[C_{k,j}(f)](x)=\int_{E_j^\perp} f(x,y)\, dy\, .\]
Then the maps $C_{k,j}$ are surjective,
$\{C^\infty_r (E_j)^{W(E_j)}, C_{k,j}\}$ is a projective system, and
its limit satisfies $\varprojlim C^\infty_r (E_j)^{W(E_j)}\not= \{0\}$.
\end{theorem}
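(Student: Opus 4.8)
The plan is to reduce all three assertions to Corollary \ref{co-Cowling} together with an elementary Fubini computation, exactly paralleling the treatment of the Paley--Wiener spaces in Theorem \ref{th-projectiveLimitPW}.

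First I would establish surjectivity of each $C_{k,j}$. Since the pair $(E_j,E_k)$ satisfies hypotheses (1) and (2) of Theorem \ref{th-IsoPW1}, Corollary \ref{co-Cowling} applies verbatim with $F=E_k$ and $E=E_j$: the map $f\mapsto \int_{E_j^\perp} f(x,y)\,dy$ from $C^\infty_r(E_k)^{W(E_k)}$ to $C^\infty_r(E_j)^{W(E_j)}$ is surjective, where $E_j^\perp$ denotes the orthogonal complement of $E_j$ inside $E_k$. This map is precisely $C_{k,j}$, so all the $C_{k,j}$ are surjective.

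Next I would check the projective-system (transitivity) relation $C_{k,j}=C_{m,j}\circ C_{k,m}$ for $j\leqq m\leqq k$. Using the chain $E_j\subseteq E_m\subseteq E_k$ and the compatibility of the inner products, one has the orthogonal decomposition $E_k=E_j\oplus(E_j^\perp\cap E_m)\oplus(E_m^\perp\cap E_k)$, so the orthogonal complement of $E_j$ in $E_k$ splits as $(E_j^\perp\cap E_m)\oplus(E_m^\perp\cap E_k)$. Writing a point of $E_k$ accordingly as $(x,z,w)$ and applying Fubini to the compactly supported smooth integrand, the iterated integral $\int_{E_j^\perp\cap E_m}\bigl(\int_{E_m^\perp\cap E_k} f(x,z,w)\,dw\bigr)\,dz$ equals the single integral $\int_{E_j^\perp\cap E_k} f(x,u)\,du$. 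This is the desired identity, which together with the surjectivity shows that $\{C^\infty_r(E_j)^{W(E_j)},\,C_{k,j}\}$ is a projective system.

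Finally, for non-triviality of the limit I would argue as in Theorem \ref{th-projectiveLimitPW}. Fix an index $j_0$; the space $C^\infty_r(E_{j_0})^{W(E_{j_0})}$ is nonzero because any radial bump function supported in $\Brc$ is automatically $W(E_{j_0})$-invariant, the Weyl group acting by orthogonal transformations fixing the origin. Choose such an $f_{j_0}\neq 0$, and using the surjectivity of each $C_{k+1,k}$ recursively select $f_{k+1}\in C^\infty_r(E_{k+1})^{W(E_{k+1})}$ with $C_{k+1,k}(f_{k+1})=f_k$ for $k\geqq j_0$, while setting $f_i=C_{j_0,i}(f_{j_0})$ for $i<j_0$. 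The transitivity relation guarantees that the resulting family $(f_j)$ is compatible, hence a nonzero element of $\varprojlim C^\infty_r(E_j)^{W(E_j)}$, its $j_0$-component being $f_{j_0}\neq 0$. The genuinely load-bearing step is the surjectivity, but that is handed to us by Corollary \ref{co-Cowling}; the only point requiring care is the orthogonal-complement bookkeeping in the Fubini step, ensuring that the two ways of integrating out the complement of $E_j$ in $E_k$ agree.
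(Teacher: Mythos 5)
Your proof is correct and follows essentially the same route as the paper, which simply says the argument is that of Theorem \ref{th-projectiveLimitPW} combined with Corollary \ref{co-Cowling}: surjectivity of each $C_{k,j}$ comes from the corollary, and the nonzero element of the limit is built by recursively lifting a nonzero function through the surjections. The one detail you make explicit that the paper leaves tacit is the Fubini verification of the transitivity $C_{k,j}=C_{m,j}\circ C_{k,m}$ via the orthogonal splitting of $E_j^\perp\cap E_k$, which is a worthwhile (and correctly executed) piece of bookkeeping.
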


\begin{proof} The proof is the same as that of Theorem
\ref{th-projectiveLimitPW}, making use of Corollary \ref{co-Cowling}.
\end{proof}

\begin{remark} The last two theorems remain valid if the assumptions holds for
a cofinite sequence $\{E_j\}_{j\in J}$. \hfill $\diamondsuit$
\end{remark}
\section{Restriction of Invariants for Classical Simple Lie Algebras}
\label{sec2}
\noindent
We will now apply this to the classical simple Lie algebras and related
symmetric spaces.  Let $\fg_n$ be a simple Lie algebra of classical type
and let $\fh_n\subset \fg_n$ be a Cartan subalgebra. Let
$\Delta_n = \Delta(\fg_n, \fh_n)$ be the set of roots of $\fh_{n,\C}$ in
$\fg_{n,\C}$ and $\Psi_n = \Psi(\fg_n, \fh_n)$ a set of simple roots. We label
the corresponding
Dynkin diagram so that $\alpha_1$ is the \textit{right} endpoint.
If $\fg_n\subseteqq \fg_k$ then we chose $\fh_n$ and $\fh_k$ so
that $\fh_n=\fg_n\cap \fh_k$.
We say that $\fg_k$ \textit{propagates} $\fg_n$, if
$\Psi_k$ is constructed from $\Psi_n$ by adding simple roots to the \textit{left} end
of the Dynkin diagrams. Thus
\begin{equation}\label{rootorder}
\begin{aligned}
&\begin{tabular}{|c|l|c|}\hline
$\Psi_n=A_n$ &
\setlength{\unitlength}{.5 mm}
\begin{picture}(155,18)
\put(48,2){\circle{2}}
\put(45,5){$\alpha_n$}
\put(49,2){\line(1,0){23}}
\put(73,2){\circle{2}}
\put(70,5){$\alpha_{n-1}$}
\put(74,2){\line(1,0){23}}
\put(98,2){\circle{2}}
\put(95,5){$\alpha_{n-2}$}
\put(99,2){\line(1,0){13}}
\put(117,2){\circle*{1}}
\put(120,2){\circle*{1}}
\put(123,2){\circle*{1}}
\put(129,2){\line(1,0){13}}
\put(143,2){\circle{2}}
\put(140,5){$\alpha_1$}
\end{picture}
&$n\geqq 1$
\\
\hline
$\Psi_k=A_k$&
\setlength{\unitlength}{.5 mm}
\begin{picture}(155,18)
\put(5,2){\circle{2}}
\put(2,5){$\alpha_{k}$}
\put(6,2){\line(1,0){13}}
\put(24,2){\circle*{1}}
\put(27,2){\circle*{1}}
\put(30,2){\circle*{1}}
\put(34,2){\line(1,0){13}}
\put(48,2){\circle{2}}
\put(45,5){$\alpha_n$}
\put(49,2){\line(1,0){23}}
\put(73,2){\circle{2}}
\put(70,5){$\alpha_{n-1}$}
\put(74,2){\line(1,0){23}}
\put(98,2){\circle{2}}
\put(95,5){$\alpha_{n-2}$}
\put(99,2){\line(1,0){13}}
\put(117,2){\circle*{1}}
\put(120,2){\circle*{1}}
\put(123,2){\circle*{1}}
\put(129,2){\line(1,0){13}}
\put(143,2){\circle{2}}
\put(140,5){$\alpha_1$}
\end{picture}
&$k\geqq n$
\\
\hline
\end{tabular}\\
&\begin{tabular}{|c|l|c|}\hline
$\Psi_n=B_n$ &
\setlength{\unitlength}{.5 mm}
\begin{picture}(120,18)
\put(48,2){\circle{2}}
\put(45,5){$\alpha_n$}
\put(49,2){\line(1,0){23}}
\put(73,2){\circle{2}}
\put(70,5){$\alpha_{n-1}$}
\put(74,2){\line(1,0){13}}
\put(93,2){\circle*{1}}
\put(96,2){\circle*{1}}
\put(99,2){\circle*{1}}
\put(104,2){\line(1,0){13}}
\put(118,2){\circle{2}}
\put(115,5){$\alpha_2$}
\put(119,2.5){\line(1,0){23}}
\put(119,1.5){\line(1,0){23}}
\put(143,2){\circle*{2}}
\put(140,5){$\alpha_1$}
\end{picture}&$n\geqq 2$
\\
\hline
$\Psi_k=B_k$&
\setlength{\unitlength}{.5 mm}
\begin{picture}(155,18)
\put(5,2){\circle{2}}
\put(2,5){$\alpha_{k}$}
\put(6,2){\line(1,0){13}}
\put(24,2){\circle*{1}}
\put(27,2){\circle*{1}}
\put(30,2){\circle*{1}}
\put(34,2){\line(1,0){13}}
\put(48,2){\circle{2}}
\put(45,5){$\alpha_n$}
\put(49,2){\line(1,0){23}}
\put(73,2){\circle{2}}
\put(70,5){$\alpha_{n-1}$}
\put(74,2){\line(1,0){13}}
\put(93,2){\circle*{1}}
\put(96,2){\circle*{1}}
\put(99,2){\circle*{1}}
\put(104,2){\line(1,0){13}}
\put(118,2){\circle{2}}
\put(115,5){$\alpha_2$}
\put(119,2.5){\line(1,0){23}}
\put(119,1.5){\line(1,0){23}}
\put(143,2){\circle*{2}}
\put(140,5){$\alpha_1$}
\end{picture}
&$k\geqq n$\\
\hline
\end{tabular} \\
&\begin{tabular}{|c|l|c|}\hline
$\Psi_n=C_n$ &
\setlength{\unitlength}{.5 mm}
\begin{picture}(155,18)
\put(48,2){\circle*{2}}
\put(45,5){$\alpha_n$}
\put(49,2){\line(1,0){23}}
\put(73,2){\circle*{2}}
\put(70,5){$\alpha_{n-1}$}
\put(74,2){\line(1,0){13}}
\put(93,2){\circle*{1}}
\put(96,2){\circle*{1}}
\put(99,2){\circle*{1}}
\put(104,2){\line(1,0){13}}
\put(118,2){\circle*{2}}
\put(115,5){$\alpha_2$}
\put(119,2.5){\line(1,0){23}}
\put(119,1.5){\line(1,0){23}}
\put(143,2){\circle{2}}
\put(140,5){$\alpha_1$}
\end{picture}
& $n \geqq 3$
\\
\hline
$\Psi_k=C_k$ &
\setlength{\unitlength}{.5 mm}
\begin{picture}(155,18)
\put(5,2){\circle*{2}}
\put(2,5){$\alpha_{k}$}
\put(6,2){\line(1,0){13}}
\put(24,2){\circle*{1}}
\put(27,2){\circle*{1}}
\put(30,2){\circle*{1}}
\put(34,2){\line(1,0){13}}
\put(48,2){\circle*{2}}
\put(45,5){$\alpha_n$}
\put(49,2){\line(1,0){23}}
\put(73,2){\circle*{2}}
\put(70,5){$\alpha_{n-1}$}
\put(74,2){\line(1,0){13}}
\put(93,2){\circle*{1}}
\put(96,2){\circle*{1}}
\put(99,2){\circle*{1}}
\put(104,2){\line(1,0){13}}
\put(118,2){\circle*{2}}
\put(115,5){$\alpha_2$}
\put(119,2.5){\line(1,0){23}}
\put(119,1.5){\line(1,0){23}}
\put(143,2){\circle{2}}
\put(140,5){$\alpha_1$}
\end{picture}
& $k\geqq n$
\\
\hline
\end{tabular}\\
&\begin{tabular}{|c|l|c|}\hline
$\Psi_n=D_n$ &
\setlength{\unitlength}{.5 mm}
\begin{picture}(155,20)
\put(48,9){\circle{2}}
\put(45,12){$\alpha_n$}
\put(49,9){\line(1,0){23}}
\put(73,9){\circle{2}}
\put(70,12){$\alpha_{n-1}$}
\put(74,9){\line(1,0){13}}
\put(93,9){\circle*{1}}
\put(96,9){\circle*{1}}
\put(99,9){\circle*{1}}
\put(104,9){\line(1,0){13}}
\put(118,9){\circle{2}}
\put(113,12){$\alpha_3$}
\put(119,8.5){\line(2,-1){13}}
\put(133,2){\circle{2}}
\put(136,0){$\alpha_1$}
\put(119,9.5){\line(2,1){13}}
\put(133,16){\circle{2}}
\put(136,14){$\alpha_2$}
\end{picture}
& $n\geqq 4$
\\
\hline
$\Psi_k=D_k$ &
\setlength{\unitlength}{.5 mm}
\begin{picture}(155,20)
\put(5,9){\circle{2}}
\put(2,12){$\alpha_{k}$}
\put(6,9){\line(1,0){13}}
\put(24,9){\circle*{1}}
\put(27,9){\circle*{1}}
\put(30,9){\circle*{1}}
\put(34,9){\line(1,0){13}}
\put(48,9){\circle{2}}
\put(45,12){$\alpha_n$}
\put(49,9){\line(1,0){23}}
\put(73,9){\circle{2}}
\put(70,12){$\alpha_{n-1}$}
\put(74,9){\line(1,0){13}}
\put(93,9){\circle*{1}}
\put(96,9){\circle*{1}}
\put(99,9){\circle*{1}}
\put(104,9){\line(1,0){13}}
\put(118,9){\circle{2}}
\put(113,12){$\alpha_3$}
\put(119,8.5){\line(2,-1){13}}
\put(133,2){\circle{2}}
\put(136,0){$\alpha_1$}
\put(119,9.5){\line(2,1){13}}
\put(133,16){\circle{2}}
\put(136,14){$\alpha_2$}
\end{picture}
& $k\geqq n$
\\
\hline
\end{tabular}
\end{aligned}
\end{equation}

Let $\fg$ and ${}'\fg\subset \fg$ be semisimple Lie algebras. Then
$\fg$ \textit{propagates} ${}'\fg$
if we can number the simple ideals $\fg_j$, $j=1,2, \ldots ,r$, in $\fg$ and
the simple ideals ${}'\fg_i$, $i = 1, 2, \dots , s$, in ${}'\fg$, so that
$\fg_j$ propagates ${}'\fg_j$ for $j=1,\ldots ,s$.

When $\fg_k$ propagates $\fg_n$ as above, they have Cartan subalgebra
$\fh_k$ and $\fh_n$ such that $\fh_n\subseteqq \fh_k$, and we have
choices of root order such that
\[\text{if } \alpha \in \Psi_n \text{ then there is a unique }
\alpha' \in \Psi_k \text{ such that } \alpha'|_{\fh_n} = \alpha.\]
It follows that
\[\Delta_n \subseteqq \{\alpha|_{\fh_n}\mid \alpha\in \Delta_k \text{ and }
\alpha|_{\fh_n}\not= 0\}\, .\]

For a Cartan subalgebra $\fh_\C$ in a simple complex Lie algebra $\fg_\C$
denote by $\fh_\R$ the Euclidean vector space
\[
\fh_\R=\{X\in\fh_\C\mid
\alpha (X) \in \R \text{ for all } \alpha \in \Delta (\fg_\C,\fh_\C)\}\, .
\]

We now discuss case by case the classical simple Lie algebras
and how the Weyl group and the invariants behave under
propagation. The result will be collected in
Theorem \ref{th-AdmExt} below. The corresponding result for
Riemannian symmetric spaces is Theorem \ref{th-AdmExtG/K}.

For $s\in \N$ identify $\R^s$ with its dual. Let $f_1= (0, 0, \ldots ,0,1) $,
\ldots , $f_s=(1,0,0,\ldots ,0)$ be the standard
basis for $\R^s$ numbered in order opposite to the usual one.
We write
\[x=x_1f_1+\ldots +x_s f_s=(x_s,\ldots ,x_1)\]
to indicate that in the following we will be adding zeros to the left
to adjust for our numbering in the Dynkin diagrams. We use the discussion in
\cite[p. 293]{V1974} as a reference for the realization of
the classical Lie algebras.

For a classical simple Lie algebra $\fg$ of rank $n$ denote by $\pi_n$ the
defining representation and
\[F_n(t,X):=\det (t+\pi_n (X))\, .\]
We denote by the same letter the restriction of $F_n(t, \cdot )$ to $\fh_n$.
In this section only we use the following simplified notation:
$W_k = W(\fg_k, \fh_k)$ denotes the usual Weyl group of the pair $(\fg_k,\fh_k)$ and
\[W_{k,n}=
W_{\fh_{n,\R}}(\fg, \fh_{k})=\{w\in W_k\mid w(\fh_{n,\R})=\fh_{n,\R}\}\]
is the subgroup with well defined restriction to $\fh_n$.

\noindent{\textbf{The case
$\mathbf{A_n}$, where $\mathbf{\fg =\mathfrak{sl}(n+1,\C)}$.}} In this case
\begin{equation}\label{an}
\fh_{k,\R}=\{(x_{k+1},\ldots ,x_{1})\in \R^{k+1}\mid x_1+\ldots +x_{k+1}=0\}\, ,
\end{equation}
where $x\in \R^{k+1}$ corresponds to the diagonal matrix
\[x\leftrightarrow \mathrm{diag}(x):=\begin{pmatrix} x_{k+1} &0 & \ldots &0 \cr 0 &x_{k} & & \cr
& &  \ddots & \cr &&& x_1\end{pmatrix}\]
Then $\Delta =\{ f_i-f_j\mid 1\leqq i\not= j\leqq k+1\}$ where $f_\ell$
maps a diagonal matrix to its $\ell^{th}$ diagonal element.  Here
$W (\fg_k,\fh_k)$ is the symmetric group $\gS_{k+1}$, all permutations of
$\{1, \dots , k+1\}$, acting on the $\fh_{k}$ by
\[\sigma\cdot (x_{k+1},\ldots ,x_{1})=
(x_{\sigma^{-1}(k+1)},\ldots ,x_{\sigma^{-1}(1)})\, .\]
We will use the simple root system
\[\Psi (\fg_k,\fh_k) =\{f_j-f_{j-1}\mid j=2,\ldots ,k+1\}\, .\]
The analogous notation will be used for $A_n$. In particular, denoting
the zero vector of length $j$ by $0_j$, we have
\begin{equation}\label{eq-hnInhk}
\fh_{n,\R}=\left \{ (0_{k-n},x_{n+1},\ldots ,x_1)\mid  x_j\in\R\quad
\text{and}\quad \sum_{j=1}^{n+1}x_j=0 \right \}\subset \fh_{k,\R}\, .
\end{equation}
This corresponds to the embedding
\[\mathfrak{sl} (n,\C)\hookrightarrow \mathfrak{sl} (k,\C)\, ,\quad X\mapsto
\begin{pmatrix} 0_{k-n,k-n} & 0\cr
0 & X\end{pmatrix}\, .\]
It follows that
\[W_{k,n}=\gS_{k-n}\times \gS_{n+1}\, .\]
Hence $W_{k,n}|_{\fh_{n,\R}}=W (\fg_n,\fh_)$
and the kernel of the restriction map is the
first factor $\gS_{k-n}$.

According to \cite[Exercise 58, p. 410]{V1974} we have
\[F_k(t,X)=\prod_{j=1}^{k+1}(t+x_j)=
t^{k+1}+\sum_{ \nu=1}^{k+1} p_{k, \nu} (X)t^{\nu -1}\, .\]
The polynomials $p_{k,\nu}$ generate $\rI_{W (\fg_k,\fh_k)}(\fh_{k,\R})$.
By (\ref{eq-hnInhk}), if $X=(0_{k-n},x)\in \fh_{n,\R}$, then
\begin{eqnarray*}
F_k(t,(0_{k-n},x))&=&t^{k+1}+\sum_{\nu=1}^{k+1} p_{k, \nu} (X)t^{\nu -1}\\
&=&t^{k-n}\det (t+\pi_{n}(x))\\
&=&t^{k-n}(t^{n+1} +\sum_{\nu =1}^{n+1} p_{n,\nu }(x)t^{\nu -1})\\
&=&t^{k+1}+\sum_{\nu =k-n+1}^{k+1} p_{n,\nu +n-k}(x)t^{\nu -1}\, .
\end{eqnarray*}
Hence
\[p_{k,\nu}|_{\fh_{n,\R}}= p_{n,\nu +n-k} \text{ for }
k-n+1 \leqq \nu \leqq k\]
and
\[p_{k,\nu}|_{\fh_{n,\R}}=0 \text{ for } 1 \leqq \nu \leqq k-n\, .\]
In particular the restriction map $\rI_{W (\fg_k,\fh_k)}(\fh_{k,\R})\to \rI_{W (\fg_n,\fh_n)}(\fh_{n,\R})$
is surjective.

\noindent
\textbf{The case $\mathbf{B_n}$, where $\mathbf{\fg=\so (2n+1,\C)}$.} In this case
$\fh_{k,\R}=\R^k$ where $\R^k$ is embedded into $\so (2n+1,\C)$ by
\begin{equation}\label{bn}
x\mapsto \begin{pmatrix} 0 & 0 &
   0\cr 0 &\mathrm{diag}(x)& 0\cr 0 & 0 &-\mathrm{diag}(x)\end{pmatrix}\, .
\end{equation}
Here $\Delta_k=\{\pm (f_i\pm f_j)  \mid 1\leqq j < i \leqq k\}\bigcup
\{\pm f_1,\ldots ,\pm  f_k\}$ and we have the positive system
$\Delta_k^+=\{f_i\pm f_j\mid 1\leqq j< i\leqq n\}\cup \{f_1,\ldots ,f_n\}$.
The simple root system is
$\Psi = \Psi(\fg_k,\fh_k) = \{\alpha_1, \dots , \alpha_k\}$ where
\[\text{the simple root } \alpha_1=f_1 \text{, and } \alpha_j=f_{j}-f_{j-1}
\text{ for } 2 \leqq j \leqq k.\]
In this case the Weyl group $W (\fg_k,\fh_k)$ is the semidirect product
$\gS_k\rtimes \{1,-1\}^k$, where
$\gS_k$ acts as before and
\[\{1,-1\}^k\cong (\Z /2\Z )^k=\{\mathbf{\epsilon}=(\epsilon_k,\ldots ,\epsilon_1)\mid \epsilon_j=\pm 1\}\]
acts by sign changes
\[\mathbf{\epsilon}\cdot x=(\epsilon_k x_k,\ldots ,\epsilon_n x_1)\, .\]
Similar notation holds for $\fh_{n,\R}$. Our embedding of
$\fh_{n,\R}\hookrightarrow \fh_{k,\R}$ corresponds to the (non-standard) embedding of
$\so (2n+1,\C)$ into $\so (2k+1,\C)$ given by
\[\begin{pmatrix} 0 & a & b \cr -b^t & A & B\cr -a^t & C & -A^t\end{pmatrix}
\mapsto \begin{pmatrix}0 & 0_{k-n} & a & 0_{k-n} & b  \cr 0_{k-n}^t & 0  & 0 & 0 &
0 \cr
-b^t & 0  & A & 0 & B\cr
0_{k-n}^t & 0  & 0 & 0 &
0\cr
-a^t & 0 & C & 0 & -A^t
\end{pmatrix}
\]
where the zeros stands for the zero matrix of the obvious size and we use
the realization from \cite[p. 303]{V1974}.

We see that
\[W_{k,n}=(\gS_{k-n}\rtimes \{1,-1\}^{k-n})\times
(\gS_n\rtimes \{1,-1\}^n)\, .\]
Thus $W_{k,n}|_{\fh_{n,\R}}= W (\fg_n,\fh_n)$ and the kernel
of the restriction map is
$\gS_{k-n}\rtimes \{1,-1\}^{k-n}$.

For the invariant polynomials we have, again using
\cite[Exercise 58, p. 410]{V1974}, that
\[F_k(t,X)= \det(t + \pi_k(X)) =
t^{2k+1}+\sum_{\nu =1}^k p_{k,\nu} (X)t^{2\nu -1}\]
and the polynomials $ p_{k,\nu}$ freely generate $\rI_{W(\fg_k,\fh_k)}(\fh_{k,\R})$.
For $X \in \fh_k$,  $F_k(t,X)$ is given by
$t\prod_{j=1}^n(t+x_j)(t-x_j)=t\prod_{j=1}^n(t^2-x_j^2)$.
By the same argument as above we have
for $X=(0_{k-n},x)\in \fh_{n,\R}\subseteqq \fh_{k,\R}$:
\begin{eqnarray*}
F_k(t,(0_{k-n},x))&=&t^{2k+1}+\sum_{\nu=1}^k p_{k, \nu} (X)t^{2\nu -1}\\
&=&t^{2(k-n)}\det (t+\pi_{n}(x))\\
&=&t^{2(k-n)}(t^{2n+1} +\sum_{\nu =1}^n p_{n,\nu }(x)t^{2\nu -1})\\
&=&t^{2k+1}+\sum_{\nu =k-n+1}^k p_{n,\nu +n-k}(x)t^{2\nu -1}\, .
\end{eqnarray*}
Hence
\[p_{k,\nu}|_{\fh_{n,\R}}= p_{n,\nu +n-k} \text{ for }
k-n+1 \leqq \nu \leqq k \]
and
\[p_{k,\nu}|_{\fh_{n,\R}}=0 \text{ for } 1 \leqq \nu \leqq k-n\, .\]
In particular, the restriction map $\rI_{W (\fg_k,\fh_k)}(\fh_{k,\R})\to \rI_{W (\fg_n,\fh_n)}(\fh_{n,\R})$
is surjective.

\noindent
\textbf{The case $\mathbf{C_n}$, where $\mathbf{\fg=\lsp (n,\C)}$.} Again
$\fh_{k,\R}=\R^k$ embedded in $\lsp (n,\C)$ by
\begin{equation}\label{cn}
x\mapsto \begin{pmatrix} \diag (x) & 0 \cr 0 &-\diag (x)\end{pmatrix}\, .
\end{equation}
In this case
\[\Delta_k=\{\pm (f_i\pm f_j)  \mid 1\leqq j < i \leqq k\}\bigcup \{\pm 2f_1,\ldots ,\pm 2f_k\}\, .\]
Take $\Delta_k^+=\{f_i-f_j\mid 1\leqq j< i\leqq n\}\cup \{2f_1,\ldots ,2 f_n\}$
as a positive system. Then the simple root system
$\Psi = \Psi (\fg_k,\fh_k) = \{\alpha_1 , \dots , \alpha_k\}$ is given by
\[\text{the simple root } \alpha_1=2f_1 \text{, and } \alpha_j=f_{j}-f_{j-1}
\text{ for } 2 \leqq j \leqq k.\]
The Weyl group $W(\fg_k,\fh_k)$ is again $\gS_k\rtimes \{1,-1\}^k$ and
\[W_{k,n}=(\gS_{k-n}\rtimes \{1,-1\}^{k-n})\times (\gS_n\rtimes\{1,-1\}^n)\, .\]
Thus, $W_{k,n}|_{\fh_{n,\R}}= W (\fg_n,\fh_n)$ and the kernel of the restriction map is
$\gS_{k-n}\rtimes \{1,-1\}^{k-n}$.

For the invariant polynomials we have, again using
\cite[Exercise 58, p. 410]{V1974}, that
\[F_k(t,X)=
t^{2k}+\sum_{\nu =1}^k p_{k,\nu} (X)t^{2(\nu -1)} =
\prod_{j=1}^n (t^2-x_j^2)\]
and the polynomials $ p_{k,\nu}$ freely generate $\rI_{W (\fg_k,\fh_k)}(\fh_{k,\R})$.
We embed $\lsp (n,\C)$ into $\lsp (k,\C)$ by
\[\begin{pmatrix} A & B \cr C & -A^t\end{pmatrix}
\mapsto \begin{pmatrix} 0_{k-n,k-n} & 0 & 0  & 0 \cr
0& A &0 & B\cr
0 & 0 & 0_{k-n,k-n} & 0\cr
0 & C& 0 &-A^t\end{pmatrix}\]
where as usual $0$ stands for a zero matrix of the correct size.
Then
\begin{eqnarray*}
F_k(t,(0_{k-n},x))&=&t^{2k}+\sum_{\nu=1}^k p_{k, \nu} (X)t^{2(\nu -1)}\\
&=&t^{2(k-n)}\det (t+\pi_{n}(x))\\
&=&t^{2(k-n)}(t^{2n} +\sum_{\nu =1}^n p_{n,\nu }(x)t^{2(\nu -1)})\\
&=&t^{2k}+\sum_{\nu =k-n+1}^k p_{n,\nu +n-k}(x)t^{2(\nu -1)}\, .
\end{eqnarray*}
Hence
\[p_{k,\nu}|_{\fh_{n,\R}}= p_{n,\nu +n-k} \text{ for }
k-n+1 \leqq \nu \leqq k\]
and
\[p_{k,\nu}|_{\fh_{n,\R}}=0 \text{ for } 1 \leqq \nu \leqq k-n\, .\]
In particular, the restriction map
$\rI_{W (\fg_k,\fh_k)}(\fh_{k,\R})\to \rI_{W (\fg_n,\fh_n)}(\fh_{n,\R})$ is surjective.

\noindent
\textbf{The case $\mathbf{D_n}$, where $\mathbf{\fg=\so (2n,\C)}$.} We take
$\fh_{k,\R}=\R^k$ embedded in $\so (2n,\C)$ by
\begin{equation}\label{dn}
x\mapsto \begin{pmatrix} \diag(x) & 0 \cr 0 & -\diag (x)\end{pmatrix}\, .
\end{equation}
Then
$\Delta_k = \{\pm (f_i\pm f_j)\mid 1\leqq j<i\leqq k\}$ and we use the simple
root system $\Psi (\fg_k,\fh_k)= \{\alpha_1 , \dots , \alpha_k\}$ given by
\[ \alpha_1 = f_1+f_2, \text{ and } \alpha_i = f_i-f_{i-1} \text{ for }
2 \leqq i \leqq k\]
The Weyl group is
\[W (\fg_k,\fh_k)=\gS_k\rtimes
  \{\mathbf{\epsilon}\in \{1,-1\}^n\mid \epsilon_1 \cdots \epsilon_n=1\}\, .\]
In other words the elements of $W(\fg_k,\fh_k)$ contain only an
\textit{even} number of sign-changes.  The invariants are given by
\[F_k (t,X)
= t^{2k}+\sum_{\nu =2}^{k} p_{k,\nu }(X) t^{2(\nu-1)}+p_{k,1}(X)^2
= \prod_{\nu =1}^n (t^2-x_j^2)\]
where $p_1$ is the Pfaffian, $p_1(X)=(-1)^{k/2}x_1\ldots x_k$, so
$p_1(X)^2=\det (X)$.  The polynomials $p_{k,1},\ldots ,p_{k,k}$
freely generate $\rI_{W (\fg_k,\fh_k)}(\fh_{k,\R})$.

We embed $\fh_{n,\R}$ in $\fh_{k,\R}$ in the same manner as before.
This corresponds to
\[\begin{pmatrix} A & B\cr C & -A^t\end{pmatrix}\mapsto
\begin{pmatrix} 0_{k-n,k-n} & & 0_{k-n,k-n} & \cr
0 &A & 0  & B\cr
0_{k-n,k-n} & 0 & 0 \cr
0 & C & 0 & -A^t\end{pmatrix}
\, .
\]
It is then clear that
\[W_{k,n}=(\gS_{k-n}\rtimes \{1,-1\}^{k-n})\times_*
(\gS_n\rtimes \{1,-1\}^{n})\]
where the ${}_*$ indicates that $\epsilon_1\cdots \epsilon_n=1$.
Therefore, the restrictions of elements of $W_{k,n}$, $k>n$, contain
all sign changes, and
\[\gS_n\rtimes \{1,-1\}^{n-1} =
W (\fg_n,\fh_n) \subsetneqq W_{k,n}|_{\fh_{n,\R}}=\gS_n\rtimes \{1,-1\}^n\, .\]
The Pfaffian $p_{k,1}(0,X)=0$ and
\begin{eqnarray*}
F_k(t,(0,x))&=& t^{2k}+\sum_{\nu =2}^{k} p_{k,\nu }(0,x) t^{2(\nu-1)}\\
&=& t^{2(k-n)}F_n(t,x) = t^{2(k-n)}(t^{2n}+
\sum_{\nu =2}^{n} p_{n,\nu }(x) t^{2(\nu-1)} + p_{n,1}(x)^2)\\
&=& t^{2k}+\sum_{\nu=k-n+2}^k p_{n,\nu +n-k}(x)t^{2(\nu -1)}+
p_{n,1}(x)^2t^{2(k-n)}\, .
\end{eqnarray*}
Hence
$$
\begin{aligned}
&p_{k,\nu}|_{\fh_{n,\R}} = p_{n,\nu +n-k} \text{ for }
   k-n+2\leqq \nu \leqq k\, , \\
&p_{k, k-n+1}|_{\fh_{n,\R}}=p_{n,1}(x)^2\,, \text{ and } \\
&p_{k,\nu}|_{\fh_{n,\R}}=0\, ,\quad \nu =1,\ldots , k-n\, .
\end{aligned}
$$
In particular the elements in $\rI_{W(\fg_k,\fh_k)}(\fh_{k,\R})|_{\fh_{n,\R}}$ are
polynomials in even powers of $x_j$ and $p_{n,1}$ is not in the image of
the restriction map. Thus
\[\rI_{W (\fg_k,\fh_k)}(\fh_{k,\R})|_{\fh_{n,\R}}\subsetneqq
 \rI_{W (\fg_n,\fh_n)}(\fh_{n,\R})\, .\]

We put these calculations together in the following theorem.

\begin{theorem}\label{th-AdmExt}  Assume $\fg_n$ and $\fg_k$ are
simple complex Lie algebras of ranks $n$ and $k$, respectively, and
that $\fg_k$ propagates $\fg_n$.
\begin{enumerate}
\item
If $\fg_k\not= \so (2n,\C)$
then
\[
W(\fg_n,\gh_n) = W_{\gh_n}(\fg_k,\fh_k)|_{\fh_n}
= \{w|_{\fh_n} \mid w \in W(\fg_k,\fh_k) \text{ with } w(\fh_n) = \fh_n\}
\]
and the restriction map
\[\rI_{W(\fg_k,\fh_k)}(\fh_{k,\R})\to \rI_{W(\fg_n,\fh_n)}(\fh_{n,\R})\]
is surjective.
\item If $\fg_k=\so(2k,\C)$ {\rm (}so $\fg_n=\so (2n,\C)${\rm )}, then
\[W_{\gh_n}(\fg_k,\fh_k)|_{\fh_n} =\{w|_{\fh_n}\mid w
\in W(\fg_k,\fh_k) \text{ with } w(\fh_n)=\fh_n\}=\gS_n\rtimes \{1,-1\}^n\]
contains all sign changes, while the elements of $\Wn$ contain only even
numbers of sign changes. In particular
$W(\fg_n,\gh_n) \subsetneqq W_{\gh_n}(\fg_k,\fh_k)|_{\fh_n}$.  The
elements of $\rI_{W(\fg_k,\gh_k)}(\fh_{k,\R})|_{\fh_{n,\R}}$ are polynomials in the
$x_j^2$, and the Pfaffian $($square root of the determinant$)$
is not in the image of the restriction map $\rI_{W(\fg_k,\fh_k)}(\fh_{k,\R})\to
\rI_{W(\fg_n,\fh_n)}(\fh_{n,\R})$. Denote by $\rI_{W(\fg_k,\fh_k)}^{\text{even}}(\fh_{k,\R})$
the algebra of invariants that are polynomials in $x_1^2,\ldots ,x_k^2$ and
similarly for $n$. Then the restriction map $\rI_{W(\fg_k,\fh_k)}^{\text{even}}(\fh_{k,\R})
\to \rI_{W(\fg_k,\fh_k)}^{\text{even}}(\fh_{n,\R})$ is surjective.
\end{enumerate}
\end{theorem}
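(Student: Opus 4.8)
The plan is to deduce the theorem by assembling the four case-by-case computations carried out in the paragraphs above. Since propagation adjoins nodes only at the left end of a Dynkin diagram of a fixed classical type, $\fg_n$ and $\fg_k$ are simple of the \emph{same} type ($A$, $B$, $C$, or $D$), so it suffices to treat each type in turn and read off the two assertions---the identification of $W_{k,n}|_{\fh_{n,\R}}$ and the surjectivity of the invariant restriction---from the explicit descriptions of $\fh_{k,\R}$, of the embedding $\fh_{n,\R}\hookrightarrow\fh_{k,\R}$, of the Weyl group, and of the generators $p_{k,\nu}$ already established.

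For part (1), i.e.\ types $A$, $B$, $C$, I would first quote the explicit factorizations $W_{k,n}=\gS_{k-n}\times\gS_{n+1}$ in type $A$ and $W_{k,n}=(\gS_{k-n}\rtimes\{1,-1\}^{k-n})\times(\gS_n\rtimes\{1,-1\}^n)$ in types $B$ and $C$. In each the first factor acts trivially on $\fh_{n,\R}$ and the second restricts isomorphically onto $W(\fg_n,\fh_n)$, giving $W_{k,n}|_{\fh_{n,\R}}=W(\fg_n,\fh_n)$. For surjectivity of invariants I would invoke that the $p_{k,\nu}$ freely generate $\rI_{W(\fg_k,\fh_k)}(\fh_{k,\R})$ together with the computed restrictions $p_{k,\nu}|_{\fh_{n,\R}}=p_{n,\nu+n-k}$ for $k-n+1\leqq\nu\leqq k$ and $p_{k,\nu}|_{\fh_{n,\R}}=0$ for $1\leqq\nu\leqq k-n$: the nonzero restrictions exhaust a generating set $p_{n,1},\dots,p_{n,n}$ of $\rI_{W(\fg_n,\fh_n)}(\fh_{n,\R})$, so the restriction map is onto.

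For part (2), type $D$, two features must be extracted from the $D_n$ computation. The group $W_{k,n}=(\gS_{k-n}\rtimes\{1,-1\}^{k-n})\times_*(\gS_n\rtimes\{1,-1\}^n)$ carries the global even-sign-change constraint; but when $k>n$ an odd sign change on the $\fh_{n,\R}$-block can be compensated by an odd sign change on the complementary block, so the restriction realizes \emph{all} of $\gS_n\rtimes\{1,-1\}^n$, strictly larger than $W(\fg_n,\fh_n)=\gS_n\rtimes\{1,-1\}^{n-1}$. For the invariants the crucial point is that the Pfaffian satisfies $p_{k,1}(0,x)=0$ (adjoining zero rows and columns annihilates the Pfaffian), whence every element of $\rI_{W(\fg_k,\fh_k)}(\fh_{k,\R})|_{\fh_{n,\R}}$ is a polynomial in the $x_j^2$ and $p_{n,1}$ itself is unattained; this gives both the proper inclusion of invariant algebras and the statement about the Pfaffian. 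Finally, passing to the even subalgebras, the identities $p_{k,\nu}|_{\fh_{n,\R}}=p_{n,\nu+n-k}$ for $k-n+2\leqq\nu\leqq k$ together with $p_{k,k-n+1}|_{\fh_{n,\R}}=p_{n,1}^2=\det$ show that the restrictions of the $p_{k,\nu}$ generate $\rI^{\text{even}}_{W(\fg_n,\fh_n)}(\fh_{n,\R})$, so the even restriction map is surjective.

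The bookkeeping for types $A$, $B$, $C$ is routine transcription of the displayed identities. The genuinely delicate point---and the one I expect to be the main obstacle---is the type-$D$ dichotomy: pinning down precisely why the parity constraint is lost upon restriction (the phenomenon forcing the $\Z_2$-extension $\widetilde{W}$ flagged in the introduction) and why the image of the full invariant restriction is exactly the even subalgebra, missing only the Pfaffian while capturing its square $\det$.
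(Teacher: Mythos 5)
Your proposal is correct and follows essentially the same route as the paper: the paper's proof of Theorem \ref{th-AdmExt} is precisely the assembly of the preceding case-by-case computations of $W_{k,n}$ and of the restrictions $p_{k,\nu}|_{\fh_{n,\R}}$ for types $A$, $B$, $C$, $D$, including the type-$D$ observations that odd sign changes on $\fh_{n,\R}$ arise by compensation in the complementary block and that the Pfaffian restricts to zero while its square survives as $p_{k,k-n+1}|_{\fh_{n,\R}}=p_{n,1}^2$. Nothing essential is missing.
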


\begin{remark}\label{Re-RestrictionDoesNotWork}
{\rm If $\fg_k=\lsl(n,\C)$ and $\fg_n$ is constructed
from $\fg_k$ by removing any $n-k$ simple roots from the Dynkin diagram
of $\fg_k$, then part 1 of Theorem \ref{th-AdmExt} remains
valid because all the Weyl groups are permutation groups.
On the other hand, if $\fg_k$ is of type $B_k,C_k$, or $D_k$ ($k\geqq 3$)
and if $\fg_n$ is constructed from $\fg_k$ by removing at least one simple
root $\alpha_i$ with $k-i\geqq 2$,
then $\fg_n$ contains at least one
simple factor $\fl$ of type $A_\ell$, $\ell \geqq 2$. Let
$\fa$  be a Cartan subalgebra of $\fl$. Then the
restriction of the Weyl group of $\fg_k$ to
$\fa_\R$ will contain $-\mathrm{id}$.  But $-\mathrm{id}$ is not in the
Weyl group $W(\gs\gl(\ell+1,\C))$, and the restriction of the
invariants will only contain even polynomials.
Hence the conclusion of part 1 in the Theorem fails in this case.}
\hfill $\diamondsuit$
\end{remark}

We also note the following consequence of the definition of propagation.
It is implicit in the diagrams following that definition.
\begin{lemma}\label{le-RestrictionOfSimpleRoots} Assume that $\fg_k$
propagates $\fg_n$. Let $\fh_k$ be a Cartan subalgebra
of $\fg_k$ such that $\fh_n=\fh_k\cap \fg_n$ is a Cartan subalgebra of
$\fg_n$. Choose positive systems
$\Delta^+(\fg_k,\fh_k)\subset \Delta(\fg_k,\fh_k)$ and
$\Delta^+(\fg_n,\fh_n)\subset \Delta(\fg_n,\fh_n)$
such that $\Delta^+(\fg_n,\fh_n)\subseteqq
\Delta^+(\fg_k,\fh_k)|_{\fh_n}$.
Then we can number the simple roots such that
\[\alpha_{n,j}=\alpha_{k,j}|_{\fh_n}\] for $j=1,\ldots ,\dim \fh_n$.
\end{lemma}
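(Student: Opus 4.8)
The plan is to read the statement off the explicit coordinate realizations already assembled earlier in this section, so that the lemma becomes a consolidation of the restriction formulas computed type by type.

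First I would reduce to the case that $\fg_n$ and $\fg_k$ are simple. Since propagation of semisimple algebras is defined ideal by ideal, the Cartan subalgebras $\fh_k$ and $\fh_n$ split as orthogonal sums over the matched simple ideals, every root is supported on a single factor, and the positive systems and simple-root numberings respect this decomposition. Thus it suffices to prove $\alpha_{n,j} = \alpha_{k,j}|_{\fh_n}$ for each matched simple pair and then concatenate the numberings, reserving the rightmost block of nodes of $\fg_k$ for the nodes coming from $\fg_n$.

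For a single simple factor I would use the coordinate functionals $f_i$ and the simple systems recorded above. The decisive observation is that in every type the embedding $\fh_{n,\R} \hookrightarrow \fh_{k,\R}$ adjoins zeros on the left, so that $f_i|_{\fh_{n,\R}}$ is the $i$-th coordinate functional of $\fh_{n,\R}$ for $i \leqq n$ (for $i \leqq n+1$ in type $A$, owing to the trace-zero model) and is $0$ for larger $i$. Now for $j = 1, \ldots, \dim \fh_n = n$ the simple root $\alpha_{k,j}$ is expressed solely in terms of these surviving coordinates --- namely $f_1$ (resp.\ $2f_1$, resp.\ $f_1+f_2$) for $j=1$ in types $B$, $C$, $D$, and $f_j - f_{j-1}$ (resp.\ $f_{j+1}-f_j$ in type $A$) for larger $j$ --- so its restriction is exactly the corresponding simple root $\alpha_{n,j}$ of $\fg_n$. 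By contrast $\alpha_{k,n+1}$ restricts to a non-root of $\fg_n$ and the remaining $\alpha_{k,j}$ restrict to $0$. This is precisely the content of the restriction computations above, and with the standard positive systems chosen there it already produces the asserted numbering.

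It remains to pass from those standard choices to arbitrary compatible positive systems, i.e.\ any ones satisfying $\Delta^+(\fg_n,\fh_n) \subseteqq \Delta^+(\fg_k,\fh_k)|_{\fh_n}$. For each positive system of $\fg_k$ there is a unique numbering of its simple roots placing $\alpha_1$ at the right endpoint of the Dynkin diagram, and the compatibility condition forces $\Delta^+(\fg_n,\fh_n)$ to be the nonzero part of the restriction of $\Delta^+(\fg_k,\fh_k)$; combined with the fact --- noted after the definition of propagation --- that each simple root of $\fg_n$ is the restriction of a unique simple root of $\fg_k$, this pins down the matching and identifies that unique root with $\alpha_{k,j}$. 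I expect the only genuine bookkeeping, and the sole place to be careful, to be type $A$, where the trace-zero realization shifts the active index range to $x_1, \ldots, x_{n+1}$; there one must verify that the $n$ rightmost roots $f_{j+1}-f_j$ with $j \leqq n$ restrict to simple roots while $f_{n+2}-f_{n+1}$ restricts to the non-root $-f_{n+1}$. No real difficulty arises beyond this.
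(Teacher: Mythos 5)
Your proposal is correct and follows exactly the route the paper intends: the paper states this lemma without a written proof, remarking only that it ``is implicit in the diagrams following that definition,'' and your argument simply makes that explicit by reducing to simple factors and reading off the restriction of each simple root from the coordinate realizations of types $A$--$D$ in Section 2. Your type-by-type check (including the index shift in the trace-zero model for type $A$) matches the computations the paper already carried out for the invariants, so nothing further is needed.
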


\section{Symmetric Spaces}\label{sec3}
\noindent
In this section we discuss restriction of invariant polynomials related to
Riemannian symmetric spaces.
Let $M=G/K$ be a Riemannian symmetric space of compact or noncompact
type. Thus $G$ is a connected semisimple Lie group with an involution
$\theta$ such that
\[(G^\theta)_o\subseteqq K\subseteqq G^\theta\]
where $G^\theta =\{x\in G\mid \theta (x)=x\}$ and the subscript ${}_o$ denotes
the connected component containing the identity element. If
$G$ is simply connected then $G^\theta$ is connected and
$K=G^\theta$. If $G$ is noncompact and with finite
center, then $K\subset G$ is a \textit{maximal
compact} subgroup of $G$, $K$ is connected, and $G/K$ is simply
connected.

Denote the Lie algebra of $G$ by $\fg$. Then
$\theta$ defines an involution $\theta : \fg\to \fg$ and
$\fg=\fk\oplus \fs$
where $\fk=\{X\in\fg\mid \theta(X)=X\}$ is the Lie algebra of $K$ and
$\fs=\{X\in \fg\mid \theta (X)=-X\}$.

Cartan Duality is a bijection between the classes of simply connected
symmetric spaces of noncompact type and of compact type. On the Lie
algebra level this isomorphism is given by
$\fg = \fk\oplus \fs \leftrightarrow \fk\oplus i\fs = \fg^d$.
We denote this bijection by $M\leftrightarrow M^d$.

Fix a maximal abelian subset $\fa\subset \fs$. For $\alpha \in\fa^*_\C$ let
\[
\fg_{\C,\alpha} =\{X\in\fg_\C \mid [H,X]=\alpha (H)X \text{ for all }
H\in \fa_\C\}\, .
\]
If $\fg_{\C,\alpha}\not=\{0\}$ then $\alpha$ is called a (restricted) root. Denote
by $\Sigma (\fg,\fa)$ the set of roots.  If $M$ is of noncompact type, then
all the roots are in the real dual space $\fa^*$ and
$\fg_{\C,\alpha}=\fg_\alpha +i\fg_\alpha$, where
$\fg_\alpha =\fg_{\C,\alpha}\cap \fg$. If $M$ is of compact type, then the
roots are purely imaginary on $\fa$,
$\Sigma (\fg,\fa)\subset i\fa^*$, and $\fg_{\C,\alpha}\cap \fg=\{0\}$. The
set of roots is preserved under duality, $\Sigma(\fg,\fa )=\Sigma(\fg^d,i\fa)$,
where we view those roots as $\C$--linear functionals on $\ga_\C$.

If $\alpha \in \Sigma (\fg,\fa )$ it can happen that
$\frac{1}{2}\alpha \in\Sigma (\fg,\fa)$ or $2\alpha \in\Sigma (\fg,\fa)$. Define
\[\Sigma_{1/2}(\fg,\fa )=\{\alpha \in\Sigma(\fg,\fa ) \mid \tfrac{1}{2 }\alpha\not\in \Sigma (\fg, \fa)\}\, .\]
Then $\Sigma_{1/2}(\fg, \fa )$ is a root system in the usual sense and the Weyl
group corresponding to $\Sigma (\fg, \fa)$ is the same as the Weyl group generated
by the reflections $s_\alpha$, $\alpha \in \Sigma_{1/2}(\fg, \fa)$.
Furthermore, $M$ is
irreducible if and only if $\Sigma_{1/2}(\fg, \fa )$ is irreducible, i.e., can not be
decomposed into two mutually orthogonal root systems.

Let $\Sigma^+(\fg, \fa)\subset \Sigma (\fg, \fa)$ be a positive system and
$\Sigma^+_{1/2}(\fg,\fa )=\Sigma^+ (\fg,\fa )\cap \Sigma_{1/2} (\fg,\fa)$. Then
$\Sigma^+_{1/2}(\fg,\fa )$ is a positive root system in
$\Sigma_{1/2} (\fg,\fa)$. Denote
by $\Psi_{1/2} (\fg,\fa)$ the set of simple roots in $\Sigma_{1/2}^+(\fg,\fa)$. Then
$\Psi_{1/2} (\fg,\fa )$ is a basis for $\Sigma (\fg,\fa )$.

The list of irreducible symmetric spaces is given by the following table.
The indices $j$ and $k$ are related by $k=2j+1$.  In the fifth
column we list the realization of $K$ as a subgroup of the compact
real form.  The second column indicates the type of the root system
$\Sigma_{1/2}(\fg,\fa)$.
(More detailed information is given by the Satake--Tits diagram for $M$;
see \cite{Ar1962} or \cite[pp. 530--534]{He1978}.
In that classification the case $\SU (p,1)$, $p\geqq 1$, is denoted by $AIV$,
but here it appears in $AIII$.
The case $\SO (p,q)$, $p+q$ odd, $p\ge q>1$, is denoted by $BI$ as in
this case the Lie algebra $\fg_\C=\so (p+q,\C)$ is of type $B$.
The case $\SO (p,q)$, with $p+q$ even, $p\ge q>1$ is denoted by $DI$ as
in this case $\fg_\C$ is of type $D$. Finally, the case $\SO (p,1)$, $p $
even, is denoted by $BII$ and $\SO (p,1)$, $p$ odd, is denoted by $DII$.)

{\footnotesize
\begin{equation}\label{symmetric-case-class}
\begin{tabular}{|c|c|l|l|l|c|c|} \hline
\multicolumn{7}{| c |}
{}\\
\multicolumn{7}{| c |}
{\large{Irreducible Riemannian Symmetric $M = G/K$, $K$ connected}}\\
\multicolumn{7}{| c |}
{}\\
\hline \hline
\multicolumn{1}{|c}{} & & \multicolumn{1}{c}{$G$ noncompact}&
    \multicolumn{1}{|c}{$G$ compact} &
        \multicolumn{1}{|c}{$K$} &
        \multicolumn{1}{|c}{Rank$M$} &
        \multicolumn{1}{|c|}{Dim$M$} \\ \hline \hline
$1$ & $A_j$ &$\mathrm{SL}(j,\C)$ &$\SU (j)\times \SU(j)$ & $\diag\, \SU(j)$ & $j-1$ & $j^2-1$ \\ \hline
$2$ & $B_j$&$\SO (k,\C)$&  $\SO (k)\times \SO (k)$ & $\diag\, \SO (k)$ &
    $j$ & $2j^2+j$ \\ \hline
$3$ & $D_j$&$\SO (2j,\C)$ & $\SO (2j)\times \SO (2j)$ & $\diag\, \SO(2j)$ &
    $j$ & $2j^2-j$ \\ \hline
$4$ & $C_j$&$\Sp (j,\C)$&$\Sp (j)\times \Sp (j)$ & $\diag\,\Sp (j)$ & $j$ & $2j^2+j$ \\ \hline
$5$ & $AIII$& $\SU (p,q)$&$\SU(p+q)$ & $\mathrm{S}(\U (p)\times \U ( q))$ &
    $\min(p,q)$ & $2pq$ \\ \hline
$6$ &$AI$ &$\mathrm{SL}(j,\R)$& $\SU (j)$ & $\SO (j)$ & $j-1$ & $\tfrac{(j-1)(j+2)}{2}$ \\ \hline
$7$ &$AII$ &$\SU^*(2j)$& $\SU (2j)$ & $\Sp (j)$ & $j-1$ & $2j^2-j-1$  \\ \hline
$8$ &$BDI$&$\SO_o (p,q)$ &$\SO (p+q)$ & $\SO (p) \times \SO (q)$ &
    $\min(p,q)$ & $pq$  \\ \hline
$9$ &$DIII$&$\SO^*(2j)$ &$\SO (2j)$ & $\U (j)$ & $[\tfrac{j}{2}]$ & $j(j-1)$ \\ \hline
$10$ &$CII$&$\Sp(p,q)$ &$\Sp (p+q)$ & $\Sp (p) \times \Sp (q)$ &
    $\min(p,q)$ & $4pq$  \\ \hline
$11$ & $CI$& $\Sp (j,\R)$  &$\Sp (j)$ & $\U (j)$ & $j$ & $j(j+1)$  \\ \hline
\end{tabular}
\end{equation}
}

Only in the following cases do we have
$\Sigma_{1/2}(\fg,\fa )\not= \Sigma (\fg,\fa)$:
\begin{itemize}
\item $AIII$ for $1 \leqq p < q$,
\item $CII$ for $1 \leqq p < q$, and
\item $DIII$ for $j$ odd.
\end{itemize}
In those three cases there is exactly one simple root with
$2\alpha \in\Sigma (\fg,\fa )$
and this simple root is  at the
right end of the Dynkin diagram for $\Psi_{1/2} (\fg,\fa )$. Also, either
$\Psi_{1/2} (\fg,\fa )=\{\alpha\}$ contains one simple root or
$\Psi_{1/2}(\fg,\fa )$ is of type $B_r$
where $r=\dim \fa$ is the rank of $M$.

Finally, the only two cases where $\Psi_{1/2} (\fg,\fa )$ is of type $D$ are
the case $\SO (2j,\C)/\SO(2j)$ or  the split case
$\SO_o(p,p)/\SO (p)\times \SO (p)$.

Later on we will also need the root system $\Sigma_2 (\fg,\fa )=
\{\alpha \in \Sigma (\fg,\fa )\mid 2\alpha\not\in \Sigma (\fg,\fa )\}$.
According to the above discussion, this will only change the simple root at the
right end of the Dynkin diagram. If $\Psi_2(\fg,\fa )$
is of type $B$ the root system $\Sigma_2 (\fg,\fa )$ will be of type $C$.

Let $G/K$ be an irreducible symmetric space of compact or non-compact type. As
before let $\fa\subset \fs$ be maximal abelian. Let $\fh$ be a Cartan subalgebra of
$\fg$ containing $\fa$. Then $\fh=(\fh\cap \fk) \oplus \fa$. Let
$\Delta (\fg,\fh)$, $\Sigma (\fg,\fa)$, and $\Sigma_{1/2}(\fg,\fa)$ denote the corresponding
root systems and $W (\fg,\fh)$ respectively $W (\fg,\fa)$ the Weyl group
corresponding to $\Delta (\fg,\fh)$ respectively $\Sigma (\fg,\fa)$. We define
an extension of those Weyl groups $\widetilde{W}(\fg,\fh)$ and $\widetilde{W}(\fg,\fa)$ in the following way:
If the root system in question (i.e., $\Delta (\fg,\fh)$ or $\Sigma_{1/2}(\fg,\fa)$)  is not of type $D$ then
$\widetilde{W}$ is just the Weyl group. If the root system is of type $D$,
so the Weyl group elements involve only
even numbers of sign changes, then $\widetilde{W}$
is the $\Z_2$--extension of the Weyl group allowing all sign changes.
Denote
$\widetilde{W}_{\fa}(\fg,\fh)=\{w\in \widetilde{W}(\fg,\fa)\mid w(\fa )=\fa\}$.

Note $\widetilde{W}(\fg,\fa)\not=W (\fg,\fa)$  only for
$M$ locally isomorphic to
$\SO (2j,\C)/\SO (2j)$ (where $\fh = \fa_\C$) or its compact dual
$(SO(2j)\times SO(2j))/\diag\, SO(2j)$ (where $\fh \cong \fa \oplus \fa$),
or to $\SO_o(j,j)/\SO (j)\times \SO(j)$
or its compact dual $\SO (2j)/\SO (j)\times \SO (j)$ where $\fh = \fa$.

If $G/K$ is reducible without Euclidean factors
then the Weyl groups are direct products of Weyl groups
for the irreducible factors. Then $\widetilde{W}(\fg,\fh)$ and
$\widetilde{W}(\fg,\fa)$ denote the corresponding products of the extended Weyl
groups for each irreducible factor.

\begin{theorem}\label{th-IhIa} Let $G/K$ be a symmetric space of
compact or non-compact type (thus no Euclidean factors).
In the above notation,
$\widetilde{W}(\fg,\fa)= \widetilde{W}_\fa (\fg,\fh)|_{\fa}$ and
the restriction map
$I_{\widetilde{W}(\fg,\fh)}(\fh_\R )\to I_{\widetilde{W}(\fg,\fa)}(\fa )$ is
surjective.
\end{theorem}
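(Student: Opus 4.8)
The plan is to reduce to an irreducible $G/K$ and then to split into the cases where no root system of type $D$ occurs, which are handled by Helgason, and the finitely many type $D$ cases, which are handled by the explicit realizations of Section \ref{sec2}. Since $G/K$ has no Euclidean factor, $\fg$ is semisimple and, by definition, $\widetilde{W}(\fg,\fh)$ and $\widetilde{W}(\fg,\fa)$ are the direct products of the extended Weyl groups of the irreducible factors. Both assertions factor through these factors: the equality $\widetilde{W}(\fg,\fa)=\widetilde{W}_\fa(\fg,\fh)|_\fa$ is a product of the corresponding equalities, and since the invariant algebra of a product is the tensor product of the invariant algebras and a tensor product of surjections is surjective, the surjectivity follows from surjectivity on each factor. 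So I may assume $G/K$ is one of the spaces in \eqref{symmetric-case-class}. Helgason supplies $W(\fg,\fa)=W_\fa(\fg,\fh)|_\fa$ from \cite{He1984}, and for classical $G/K$ the surjectivity of $I_{W(\fg,\fh)}(\fh_\R)\to I_{W(\fg,\fa)}(\fa)$ is Helgason's theorem \cite{He1964}. If neither $\Delta(\fg,\fh)$ nor $\Sigma_{1/2}(\fg,\fa)$ is of type $D$, then $\widetilde{W}=W$ everywhere and the theorem is exactly these two statements; so the only genuine work is in the type $D$ cases, namely $\SO(2j,\C)/\SO(2j)$, $\SO_o(j,j)/\SO(j)\times\SO(j)$, their compact duals, and the cases $BDI$ with $p+q$ even and $p>q$, and $DIII$.

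The uniform observation driving the type $D$ analysis is that the $\Z_2$-extension of $W(D_m)$ is the full signed permutation group $\gS_m\rtimes\{\pm 1\}^m = W(B_m)$, so whenever type $D$ occurs the extended invariant algebra is exactly the algebra of $\gS$-symmetric polynomials in the squared coordinates $x_1^2,\dots,x_m^2$. For the equality $\widetilde{W}(\fg,\fa)=\widetilde{W}_\fa(\fg,\fh)|_\fa$ I would, starting from Helgason's equality for the unextended groups, exhibit in each case an element of $\widetilde{W}(\fg,\fh)$ that performs an odd number of sign changes, preserves $\fa$, and restricts to an odd sign change on $\fa$. For the split space $\SO_o(j,j)$ and its dual this is trivial since $\fh_\R=\fa$; for the complex space $\SO(2j,\C)/\SO(2j)$ and its group-case dual $(\SO(2j)\times\SO(2j))/\diag\,\SO(2j)$, where $\Delta(\fg,\fh)=D_j\times D_j$ and $\fa$ sits diagonally, the simultaneous odd sign change $(\tau,\tau)$ in the two factors lies in $\widetilde{W}_\fa(\fg,\fh)$ and restricts to an odd sign change; and for $BDI$ and $DIII$ an odd sign change on the coordinates spanning $\fa$ extends by the identity on the complementary coordinates. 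Combined with $W(\fg,\fa)=W_\fa(\fg,\fh)|_\fa$ this gives the first assertion.

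For the surjectivity I would argue directly, since after extension both source and target consist of polynomials in the squared coordinates and the restriction is computed from the embedding $\fa\hookrightarrow\fh_\R$ read off from Section \ref{sec2}: the power sums $\sum_i x_i^{2k}$ restrict onto the power sums generating the target invariants — by setting the complementary coordinates to zero ($BDI$, $p>q$), by pairing coordinates ($DIII$), by the identification $\fh_\R=\fa$ (split and dual), or by the diagonal restriction $x'=-x$ (complex and group cases) — exactly as in the type $B$ and type $C$ computations of Section \ref{sec2}. The step I expect to be the main obstacle is the group/complex case, where $\Delta(\fg,\fh)=D_j\times D_j$ is reducible, $\Sigma_{1/2}(\fg,\fa)$ is a single $D_j$, and $\widetilde{W}_\fa(\fg,\fh)$ is only the diagonal copy $\{(w,w)\}$, which surjects merely onto the diagonal of $\widetilde{W}(\fg,\fh)/W(\fg,\fh)=\Z_2\times\Z_2$; consequently a naive averaging of a Helgason lift over the full extension need not preserve the restriction. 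What rescues the argument is precisely that the extended target $I_{\widetilde{W}(\fg,\fa)}(\fa)$ contains no Pfaffian, so any such invariant, being a polynomial in the $x_i^2$, lifts directly into the first $D_j$ factor, where it is already invariant under the full extended group. Carrying out this bookkeeping of sign changes, and checking its analogues in the remaining type $D$ cases, is the delicate part of the proof.
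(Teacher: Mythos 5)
Your proposal is correct and follows essentially the same route as the paper's proof: reduce to the irreducible case, invoke Helgason for the equality $W(\fg,\fa)=W_\fa(\fg,\fh)|_\fa$ and for surjectivity of the unextended restriction map, and then treat the type $D$ situations explicitly using the fact that the $\Z_2$--extension of $W(D_m)$ is the full signed permutation group, so that the extended invariants are symmetric polynomials in the $x_i^2$ and the power sums restrict onto generators. If anything you are more careful than the paper's terse argument, which only explicitly separates the cases according to whether $\Sigma(\fg,\fa)$ is of type $D$, whereas you also isolate and handle the cases ($BDI$ with $p+q$ even and $p>q$, and $DIII$) where $\Delta(\fg,\fh)$ is of type $D$ but $\Sigma_{1/2}(\fg,\fa)$ is not, as well as the diagonal subtlety in the group-manifold/complex cases.
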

\begin{proof} We can assume that $G/K$ is irreducible. If neither
$\Delta (\fg,\fh)$ nor $\Sigma (\fg,\fa)$ is of type $D$ this is
Theorem 5 from \cite{He1964}. According to the above discussion, the only
cases where $\Sigma (\fg,\fa)$ is of type $D$ are where $\Delta (\fg,\fh)$
is also of type $D$ and $\fa = \fh_\R$, or $\fa$ is the diagonal in
$\fh \cong \fa \oplus \fa$, or $\fa = \fh$.  The statement is clear when
$\fa$ is $\fh$ or $\fh_\R$.  If $\fa$ is the diagonal in $\fh \cong
\fa \oplus \fa$ then $\widetilde{W}_\fa (\fg,\fh)$ is the diagonal in
$\widetilde{W}(\fg,\gh) \cong
\widetilde{W}(\fg,\fa) \times \widetilde{W}(\fg,\fa)$, hence again is
$\widetilde{W}(\fg,\fa)$.

Now suppose that neither $\Delta (\fg,\fh)$ nor $\Sigma_{1/2} (\fg,\fa)$
is of type $D$.  Then $\widetilde{W}(\fg,\fa)=W(\fg,\fa)$ consists of all
permutations with sign changes (with respect to the correct basis). The
claim now follows from  the explicit calculations in
\cite[pp. 594, 596]{He1964}.
\end{proof}

Let $M_k=G_k/K_k$ and $M_n=G_n/K_n$ be irreducible
symmetric spaces of compact or noncompact type.  We say that
$M_k$ \textit{propagates} $M_n$, if $G_n\subseteqq G_k$, $K_n=K_k\cap G_n$,
and either $\fa_k=\fa_n$ or choosing $\fa_n\subseteqq \fa_k$ we
only add simple roots to the left end
of the Dynkin diagram for $\Psi_{1/2}(\fg_n,\fa_n)$ to obtain the Dynkin diagram
for $\Psi_{1/2}(\fg_k,\fa_k)$.
So, in particular $\Psi_{1/2}(\fg_n,\fa_n)$ and
$\Psi_{1/2}(\fg_k, \fa_k)$ are of the same type.
In general, if
$M_k$ and $M_n$ are Riemannian symmetric spaces
of compact or noncompact type, with universal covering
$\widetilde{M_k}$ respectively $\widetilde{M_n}$, then $M_k$
\textit{propagates}
$M_n$ if we can enumerate the irreducible factors of $\widetilde{M}_k= M_k^1\times
\ldots \times M_k^j$ and $\widetilde{M}_n=M_n^1\times \ldots \times M_n^i$, $i \leqq j$
so that $M_k^s$ propagates $M_n^s$ for $s=1,\ldots ,i$.
Thus, each $M_n$ is, up to covering, a product of irreducible factors
listed in Table \ref{symmetric-case-class}.

In general we can construct infinite sequences of propagations by moving
along each row in Table \ref{symmetric-case-class}. But there are also  inclusions like
$\mathrm{SL} (n,\R )/\SO (n)\subset \mathrm{SL} (k,\C)/\SU (k)$ which
satisfy the definition of propagation.

When $\fg_k$ propagates $\fg_n$, and $\theta_k$ and
$\theta_n$ are the corresponding involutions with
$\theta_k|_{\fg_n} = \theta_n$, the corresponding eigenspace decompositions
$\fg_k=\fk_k\oplus \fs_k$ and $\fg_n=\fk_n\oplus \fs_n$
give us
\[
\fk_n=\fk_k\cap \fg_n\, ,\quad \text{and}\quad \fs_n=\fg_n\cap \fs_k\, .\]
We recursively choose maximal commutative subspaces $\fa_k\subset \fs_k$ such
that $\fa_{n} \subseteqq \fa_k$ for $k\geqq n$.
Denote by $W(\fg_n,\fa_n)$ and $W(\fg_k,\fa_k)$ the corresponding Weyl
groups. The extensions $\widetilde{W}(\fg_k,\fa_k)$ and
$\widetilde{W}(\fg_n,\fa_n)$ are defined as just before Theorem \ref{th-IhIa}.
Let  $\rI(\fa_n)=\rI_{W (\fg_n,\fa_n)}(\fa_n)$,
$\rI_{\widetilde W (\fg_n,\fa_n)}(\fa_n)$, and
$\rI_{\widetilde W (\fg_k,\fa_k)}(\fa_k)$ denote the respective sets of Weyl
group invariant or $\widetilde{W}$--invariant polynomials on $\fa_n$ and
$\fa_k$. As before we let
\begin{equation}\label{eq-WknA}
W_{\fa_n}(\fg_k,\fa_k):=\{w\in W (\fg_k,\fa_k) \mid w(\fa_n)=\fa_n\}
\end{equation}
and define $\widetilde{W}_{\fa_n}(\fg_k,\fa_k)$ in the same way.

\begin{theorem}\label{th-AdmExtG/K} Assume that $M_k$ and $M_n$ are
symmetric spaces of compact or noncompact type and that $M_k$ propagates
$M_n$.

\noindent {\rm (1)} If $M_n$ does not contain any irreducible factor
with $\Psi_{1/2} (\fg_n,\fa_n)$ of type $D$, then
\begin{equation}\label{eq-RestrictionOfWeyl1}
W_{\fa_n}(\fg_k,\fa_k)|_{\fa_n} = W(\fg_n,\fa_n)
\end{equation}
and the restriction map $\rI(\fa_k)\to \rI(\fa_n)$ is surjective.

\noindent {\rm (2)} If $\Psi_{1/2}(\fg_n,\fa_n)$ is of type $D$ then
$$W (\fg_n,\fa_n)\subsetneqq
 W_{\fa_n} (\fg_k,\fa_k)|_{\fa_n} \text{ and }
\rI_{W(\fg_k,\fa_k)}(\fa_k)|_{\fa_n}\subsetneqq
  \rI_{W(\fg_n,\fa_n)}(\fa_{n}) .$$
On the other hand
$\widetilde{W}(\fg_n,\fa_n)= \widetilde{W}_{\fa_n} (\fg_k,\fa_k)|_{\fa_n}$
and $\rI_{\widetilde{W}(\fg_n,\fa_n)}(\fa_n) = \rI_{\widetilde{W}(\fg_k,\fa_k)}(\fa_k)|_{\ga_n}$.

\noindent {\rm (3)} In all cases
$\widetilde{W}(\fg_n,\fa_n)= \widetilde{W}_{\fa_n} (\fg_k,\fa_k)|_{\fa_n}$
and $\rI_{\widetilde{W} (\fg_k,\fa_k)}
(\fa_k)|_{\fa_n}= \rI_{\widetilde{W}(\fg_n,\fa_n)}(\fa_n)$.
\end{theorem}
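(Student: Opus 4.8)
The plan is to reduce the entire statement to the classical computations of Section \ref{sec2}, that is, to Theorem \ref{th-AdmExt}; once the irreducible case is matched with one of the four classical cases there, all three assertions become bookkeeping about the reduced root systems $\Sigma_{1/2}$ and the $\widetilde{W}$--extensions.

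First I would reduce to the irreducible case. Passing to universal covers, write $\widetilde{M}_k = M_k^1\times\cdots\times M_k^j$ and $\widetilde{M}_n = M_n^1\times\cdots\times M_n^i$ with $i\leqq j$ and $M_k^s$ propagating $M_n^s$ for $s\leqq i$, so that $\fa_k=\bigoplus_{s=1}^j\fa_k^s$ and $\fa_n=\bigoplus_{s=1}^i\fa_n^s$ with $\fa_n^s\subseteqq\fa_k^s$ and the components $s>i$ contributing $0$ to $\fa_n$. Both $W(\fg_k,\fa_k)$ and $\widetilde W(\fg_k,\fa_k)$ are direct products over the factors acting componentwise, so an element stabilizes $\fa_n$ exactly when each of its first $i$ components stabilizes $\fa_n^s$; hence $W_{\fa_n}(\fg_k,\fa_k)|_{\fa_n}=\prod_{s=1}^i W_{\fa_n^s}(\fg_k^s,\fa_k^s)|_{\fa_n^s}$, and likewise for $\widetilde W$, the factors $s>i$ restricting trivially. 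On invariants $\rI(\fa_k)=\bigotimes_{s=1}^j\rI(\fa_k^s)$, and restricting a pure tensor sends the positions $s>i$ to constants, so surjectivity of each factor map $\rI(\fa_k^s)\to\rI(\fa_n^s)$ yields surjectivity of $\rI(\fa_k)\to\rI(\fa_n)$ (insert $1$ in the positions $s>i$). Thus it suffices to treat a single irreducible $M_k$ propagating an irreducible $M_n$.

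Second I would identify this irreducible problem with one of the classical cases of Section \ref{sec2}. As recalled above, $W(\fg,\fa)$, and hence $\rI(\fa)$, depends only on the reduced system $\Sigma_{1/2}(\fg,\fa)$, which by the classification \eqref{symmetric-case-class} is of type $A$, $B$, $C$ or $D$. By the definition of propagation, $\Sigma_{1/2}(\fg_n,\fa_n)$ and $\Sigma_{1/2}(\fg_k,\fa_k)$ are of the same type and the latter is obtained from the former by adjoining simple roots at the left end; the $\Sigma_{1/2}$--analogue of Lemma \ref{le-RestrictionOfSimpleRoots} lets us label the simple roots so that $\alpha_{n,j}=\alpha_{k,j}|_{\fa_n}$. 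Consequently the triple $(\fa_n\subseteqq\fa_k,\,W(\fg_n,\fa_n),\,W(\fg_k,\fa_k))$ is isomorphic, as a Euclidean reflection group together with a stable subspace and the induced polynomial restriction map, to the triple $(\fh_{n,\R}\subseteqq\fh_{k,\R},\,W(\fg_n,\fh_n),\,W(\fg_k,\fh_k))$ attached in Section \ref{sec2} to the classical simple complex Lie algebra of the same type (a type--$BC$ system gives the same signed--permutation group as type $B$). The precise normalization of the inner products is immaterial, since Theorem \ref{th-AdmExt} concerns only the reflection--group action and the restriction map determined by $\fa_n\hookrightarrow\fa_k$.

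Third I would read off the conclusions. If $\Sigma_{1/2}(\fg_n,\fa_n)$ is not of type $D$, part (1) of Theorem \ref{th-AdmExt} gives $W_{\fa_n}(\fg_k,\fa_k)|_{\fa_n}=W(\fg_n,\fa_n)$ and surjectivity of $\rI(\fa_k)\to\rI(\fa_n)$; this is (1), and since $\widetilde W=W$ outside type $D$ it also gives (3). If $\Sigma_{1/2}(\fg_n,\fa_n)$ is of type $D$, part (2) gives $W_{\fa_n}(\fg_k,\fa_k)|_{\fa_n}=\gS_n\rtimes\{1,-1\}^n$ (all sign changes), strictly containing $W(\fg_n,\fa_n)$ (even sign changes only), and shows $\rI_{W(\fg_k,\fa_k)}(\fa_k)|_{\fa_n}$ to consist of polynomials in the $x_j^2$, hence to miss the Pfaffian and be strictly contained in $\rI_{W(\fg_n,\fa_n)}(\fa_n)$; this is the first half of (2). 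For the extensions, $\widetilde W(\fg,\fa)$ is by definition the full signed--permutation group in type $D$, whose invariants are exactly the even invariants of Theorem \ref{th-AdmExt}(2); the restricted group $\widetilde W_{\fa_n}(\fg_k,\fa_k)|_{\fa_n}$ is again all sign changes, so equals $\widetilde W(\fg_n,\fa_n)$, and the even--invariant surjectivity of Theorem \ref{th-AdmExt}(2) gives $\rI_{\widetilde W(\fg_k,\fa_k)}(\fa_k)|_{\fa_n}=\rI_{\widetilde W(\fg_n,\fa_n)}(\fa_n)$ (the image lies in the target because $P(\tilde w x)=P(x)$ for $\tilde w\in\widetilde W_{\fa_n}(\fg_k,\fa_k)$, by the Weyl--group identity just proved). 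This completes (2), and (3) follows.

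The step I expect to be the main obstacle is the identification in the second paragraph: one must verify that ``propagation of symmetric spaces'', which is phrased through $\Psi_{1/2}$, genuinely reproduces the labeled zero--padding embedding $\fh_{n,\R}\hookrightarrow\fh_{k,\R}$ of Section \ref{sec2}, so that Theorem \ref{th-AdmExt} may be quoted verbatim. This is exactly where one needs that the Weyl group and the invariants see only $\Sigma_{1/2}$ (making the $\Sigma$ versus $\Sigma_{1/2}$ and the $B$ versus $BC$ distinctions harmless) and that the simple--root restriction pins down the embedding; the remaining passage from irreducible factors to products is the routine bookkeeping of the first paragraph.
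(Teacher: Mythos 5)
Your proposal is correct and follows essentially the same route as the paper, whose entire proof is the reduction to irreducible components followed by the observation that the argument of Theorem \ref{th-AdmExt} applies verbatim to the reduced restricted root systems. You simply spell out in detail the two steps the paper leaves implicit (the product decomposition of the Weyl groups and invariants, and the identification of $\Sigma_{1/2}$ with the classical cases of Section \ref{sec2}).
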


\begin{proof} It suffices to prove this for each irreducible component of
$M_n$.  The argument of Theorem \ref{th-AdmExt} is valid here as well, and
our assertion follows.
\end{proof}

\section{Application to Fourier Analysis on Symmetric Spaces of the noncompact
Type}\label{sec4}
\noindent
In this section we apply the above results to harmonic
analysis on symmetric spaces.
We start by recalling the main ingredients for the Helgason Fourier transform
on a Riemannian symmetric space $G/K$ of the noncompact type. The material
is standard and we refer to the books of
Helgason, in particular \cite{He1984}, for details.  We use the
notation from the previous section: $\Sigma (\fg,\fa )$
is the set of (restricted) roots of $\fa$ in $\fg$ and
$\Sigma^+(\fg,\fa )\subset \Sigma (\fg,\fa )$ is a positive system. Let
$$
\fn = \bigoplus_{\alpha \in \Sigma^+(\fg,\fa )} \fg_\alpha, \ \ \
\fm = \fz_{\fk}(\fa), \ \ \text{ and } \ \
\gp = \fm + \fa + \fn.
$$
Denote by $N$ (respectively $A$) the analytic subgroup of
$G$ with Lie algebra $\fn$ (respectively $\fa$). Let
$M=Z_K(\fa)$ and $P=MAN$. Then $M$ and $P$ are
closed subgroup of $G$ and $P$ is a \textit{minimal parabolic subgroup}.
Note, that we are using $M$ in two different ways, once as the
symmetric space $G/K$ and also as a subgroup of $G$. The meaning
will always be clear from the context.

We have the Iwasawa decomposition
\[G=NAK:\ C^\omega\text{--diffeomorphic to } N\times A\times K \text{ under }
(n,a,k) \mapsto nak\, .\]
For $x\in G$ denote by $a(x)\in A$ the unique element in $A$ such that
$x\in Na(x)K$. Then $x\mapsto a(x)$ is analytic. For $\lambda\in \fa_\C^*$ let
\[a^\lambda :=e^{\lambda (\log (a))}\, .\]
Then the characters on $A$ are given by
\[a\mapsto \chi_\lambda (a):=a^\lambda \]
for some $\lambda\in\fa_\C^*$. $\chi_\lambda$ is unitary if and only if
$\lambda \in i\fa^*$. Let $m_\alpha =\dim \fg_\alpha$ and
\[\rho = \tfrac{1}{2}\sum_{\alpha\in\Sigma^+(\fg,\fa)}m_\alpha \, \alpha\, .\]

For a moment let $G$ be a locally compact topological group and
$K\subset G$ a compact subgroup
A continuous, non-zero, $K$--biinvariant function $\varphi : G\to\C$ is
\textit{$K$-spherical}
or just \textit{spherical} if for all $x,y\in G$ we have
$$\int_{K} \varphi (xky)\, dk=\varphi (x)\varphi (y)\,.$$
We will view the spherical functions on $G$ as $K$--invariant functions
on $G/K$. The importance of the spherical functions comes from the
fact that a map $\chi : L^1(G/K)^{K}\to \C$ is a continuous algebra
homomorphism if and only if there exists a bounded spherical function $\varphi$
such that $\chi (f)=\int_{G/K} f(x)\overline{\varphi (x)}\, dx$.
The spherical function $\varphi$ is called \textit{positive definite} if
for all $n\in \N$, $c_j\in \C$, $x_j\in G$, $1\leqq j\leqq n$ we
have
$$\sum_{\nu,\mu=1}^n c_\nu \overline{c_\mu }\varphi (x_\nu^{-1}x_\mu )\geqq 0\, ,$$
in other words, if the matrix $(\varphi (x_\nu^{-1}x_\mu))_{\nu,\mu}$ is
positive semidefinite for finite subsets $\{x_1,\ldots ,x_n\}$ of
$G$. The positive definite spherical functions are particular coefficient
functions
\begin{equation}\label{def-spherical-n}
\varphi (g) = ( e_{\pi },\pi (g)e_{\pi } )
\end{equation}
where $\pi$ is an irreducible unitary representation with
nonzero $K$--fixed vectors and $e_{\pi } \in V_\pi$ is  $K$--fixed unit vector.

For $\lambda \in \fa_\C^*$ define
\begin{equation}\label{def-spherical}
\varphi_\lambda (x)=\int_K \chi_{\lambda -\rho}(a(kx))\, dk
\end{equation}
where the Haar measure $dk$ on $K$ is normalized by $\int_K\, dk=1$.
Then $\varphi_\lambda$ is a \textit{spherical function} on $G$, $\varphi_\lambda
=\varphi_\mu$ if and only if $\mu\in W (\fg,\fa)^\prime \cdot \lambda$,
and every spherical function is equal to some $\varphi_\lambda$.
Here ${}^\prime$ stands
for the transpose of the elements in $W(\fg,\fa)$ acting on
$\fa^*$ and $\fa_\C^*$  The function
$\varphi_\lambda$ is positive definite when $\lambda\in i\fa^*$.

The \textit{spherical Fourier transform} on $M$ is given by
\[\cF(f)(\lambda)=\widehat{f}(\lambda ):=\int_M f(x)\varphi_{-\lambda }(x)
\, dx\, \quad f\in C_c^\infty (M)^K\, .\]
The invariant measure $dx$ on $M$ can be normalized so that
the spherical Fourier transform extends to an unitary isomorphism
\[f\mapsto \widehat{f}\, ,\quad L^2(M)^K\cong L^2(i\fa^*_+,
|c( \lambda )|^{-2}d\lambda )
\cong
L^2\left(i\fa^*, \tfrac{d\lambda}{\# W (\fg,\fa ) |c(\lambda )|^2}\right)^{W(\fg,\fa)}\]
where $\fa_+^* = \{\lambda \in \fa^* \mid \langle \lambda,
\alpha \rangle > 0 \text{ for all } \alpha \in \Sigma^+(\fg,\fa)\}$ and
$c(\lambda )$ denotes the Harish-Chandra $c$--function.
For $f\in C_c^\infty (M)^K $ the inversion is given by
\[f(x)=\frac{1}{\# W (\fg,\fa )} \int_{i\fa^*} \widehat{f}(\lambda )\varphi_{\lambda }
(x)\frac{d\lambda }{|c (\lambda )|^2}\, .\]

If $\sigma :\fa\to \fa$
is linear, then $\sigma':\fa^*_\C\to \fa^*_\C$ is its transpose,
$\sigma'(\lambda )(H)=\lambda (\sigma (H))$.

Recall the notation from the last section. If
$\Sigma (\fg,\fa)=\Sigma_{1/2}(\fg,\fa)$
then $\Psi (\fg,\fa)$ is the simple root system.

A connected semisimple Lie group $G$ is {\it algebraically simply connected}
if it is an analytic subgroup of the connected simply connected group
$G_\C$ with Lie algebra $\fg_\C$.  Then the analytic subgroup $K$ of $G$
for $\gk$ is compact, and every automorphism of $\fg$ integrates to an
automorphism of $G$.

\begin{lemma}\label{le-AutDynkDiagrG} Let $G/K$ be a Riemannian
symmetric space of noncompact type with $G$ simple and algebraically
simply connected.  Suppose that $\fa$ is a Cartan subalgebra of $\fg$,
i.e., that $\fg$ is a split real form of $\fg_\C$. If $\sigma :\fa\to \fa$ is
a linear isomorphism such that $\sigma'$ defines an automorphism of
the Dynkin diagram of $\Psi (\fg,\fa)$, then there exists a
automorphism $\widetilde{\sigma} :G\to G$ such that
\begin{enumerate}
\item $\widetilde{\sigma}|_\fa=\sigma$ where by abuse of notation we write
$\widetilde{\sigma}$ for $d\widetilde{\sigma}$,
\item $\widetilde{\sigma}$ commutes with the
the Cartan involution $\theta$, and in particular
$\widetilde{\sigma}(K)=K$,
\item $\widetilde{\sigma}(N)=N$.
\end{enumerate}
\end{lemma}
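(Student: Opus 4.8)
The plan is to build $\widetilde\sigma$ as a pinned (diagram) automorphism of $\fg_\C$, check that it descends to the split real form $\fg$ with the three required compatibilities, and then integrate it to $G$. Because $\fg$ is split, $\fa$ is a Cartan subalgebra, the restricted roots $\Sigma(\fg,\fa)$ are the roots of $\fa_\C$ in $\fg_\C$ and are real on $\fa$, and I may fix a Chevalley basis $\{\alpha_i^\vee,\, X_{\pm\alpha}\}$ adapted to $\fa$ and to $\Sigma^+(\fg,\fa)$ with all structure constants real. Writing $e_i = X_{\alpha_i}$, $f_i = X_{-\alpha_i}$, $h_i = \alpha_i^\vee$ for $\alpha_i \in \Psi(\fg,\fa)$, these are Chevalley generators obeying the Serre relations.

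First I would produce the lift. Since $\sigma'$ is a diagram automorphism it is an isometry permuting the simple roots, so its transpose $\sigma$ is an isometry of $\fa$ permuting the simple coroots; I write $\sigma(\alpha_i^\vee) = \alpha_{q(i)}^\vee$ for the resulting node permutation $q$, which again realizes a diagram automorphism and hence preserves the Cartan matrix. Because the Serre relations depend only on the Cartan matrix, the assignment $e_i \mapsto e_{q(i)}$, $f_i \mapsto f_{q(i)}$, $h_i \mapsto h_{q(i)}$ extends uniquely to a Lie algebra automorphism $\phi$ of $\fg_\C$. As $\phi(h_i) = \alpha_{q(i)}^\vee$ agrees with $\sigma$ on the basis $\{\alpha_i^\vee\}$ of $\fa$, I get $\phi|_\fa = \sigma$, which is (1); and since the generators and structure constants are real, $\phi$ preserves $\fg$, so I set $d\widetilde\sigma := \phi|_\fg$.

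Next I would verify (2) and (3) at the Lie-algebra level. For a split real form the Cartan involution is the Chevalley involution $\omega$, determined by $\omega|_\fa = -\id$ and $\omega(e_i) = -f_i$, $\omega(f_i) = -e_i$; indeed $\omega$ fixes $\fk = \bigoplus_{\alpha>0}\R(X_\alpha - X_{-\alpha})$ and negates $\fs = \fa \oplus \bigoplus_{\alpha>0}\R(X_\alpha + X_{-\alpha})$, so $\omega = \theta$. On generators $\phi\omega(e_i) = \phi(-f_i) = -f_{q(i)} = \omega(e_{q(i)}) = \omega\phi(e_i)$, and likewise for $f_i$ and $h_i$; hence $\phi\theta = \theta\phi$, giving (2) and in particular $\phi(\fk) = \fk$. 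For (3), the diagram automorphism $q$ carries $\Sigma^+(\fg,\fa)$ into itself because positive roots are the nonnegative integral combinations of the $\alpha_i$, which are permuted among themselves; thus $\phi(\fn) = \fn$.

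Finally I would integrate. As $G$ is algebraically simply connected, every automorphism of $\fg$ exponentiates to one of $G$ (recalled just before the lemma), so $\phi|_\fg$ integrates to $\widetilde\sigma\colon G \to G$. The group-level assertions then follow from the Lie-algebra ones: $d\widetilde\sigma|_\fa = \sigma$ is (1); $\widetilde\sigma$ commutes with $\theta$ and preserves $\fk$, hence fixes the analytic subgroup $K$, giving (2); and $d\widetilde\sigma(\fn) = \fn$ yields $\widetilde\sigma(N) = N$, which is (3). I expect the one genuinely delicate point to be the bookkeeping in the lift: passing through the Chevalley generators and the Serre relations is exactly what makes $\phi$ simultaneously real, equal to the prescribed $\sigma$ on $\fa$, and commuting with the Chevalley--Cartan involution, so the only substantive checks are the identification $\omega = \theta$ for the split form and the verification that $\sigma$ permutes the simple coroots by a diagram-automorphism permutation $q$, so that $\phi|_\fa$ is $\sigma$ itself rather than another lift.
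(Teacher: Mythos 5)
Your proof is correct and follows essentially the same route as the paper's: lift the diagram automorphism to $\fg_\C$ via a distinguished real basis adapted to $\fa$ and $\Sigma^+(\fg,\fa)$, observe that it preserves the split real form, check commutation with the Cartan involution (identified with the Chevalley involution, which the paper justifies by uniqueness of the split form with $\fa\subset\fs$) and preservation of $\fn$, then integrate using algebraic simple connectivity. The only difference is that you construct the lift from Chevalley generators and the Serre presentation, while the paper invokes a Weyl basis and \cite[Lemma 4.3.24]{V1974}; these are interchangeable.
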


\begin{proof} The complexification of $\fa$ is a Cartan subalgebra
$\fh$ in $\fg_\C$ such that $\fh_\R=\fa$. Let
$\{Z_\alpha\}_{\alpha\in\Sigma (\fg,\fa)}$ be a
Weyl basis for $\fg_\C$ (see, for example, \cite[page 285]{V1974}).
Then (see, for example, \cite[Theorem 4.3.26]{V1974}),
\[\fg_0=\fa \oplus \bigoplus_{\alpha\in \Delta (\fg,\fh)}\R Z_\alpha
\]
is a real form of $\fg_\C$. Denote by $B$ the Killing form
of $\fg_\C$. Then $B(Z_\alpha ,Z_{-\alpha})=-1$ and
it follows that $B$ is positive definite on $\fa$ and on
$\bigoplus_{\alpha\in\Sigma^+(\fg,\fa)}\R (Z_\alpha- Z_{-\alpha})$,
and negative definite on
$\bigoplus_{\alpha\in\Sigma^+(\fg,\fa)}\R (Z_\alpha + Z_{-\alpha})$.
Hence, the map
$$\theta|_{\fa }=- \id \text{ and } \theta(Z_\alpha) = Z_{-\alpha}$$
defines a Cartan involution on $\fg_0$ such that the Cartan subalgebra
$\fa$ is contained in the corresponding $-1$ eigenspace $\fs$. As there
is (up to isomorphism) only one real form of $\fg_\C$ with Cartan
involution
such that $\fa\subset \fs$ we can assume
that $\fg=\fg_0$ and the above Cartan involution $\theta$ is the
the one we started with.

Going back to the proof of \cite[Lemma 4.3.24]{V1974}
the map defined by
\[\widetilde{\sigma}|_\fa = \sigma \, \quad \text{and}
\quad \widetilde{\sigma}(Z_\alpha ) =Z_{\sigma {\alpha}}\]
is a Lie algebra isomorphism $\widetilde{\sigma}:\fg\to \fg$.
But then
$$
\widetilde{\sigma}(\theta (Z_\alpha)) = \widetilde{\sigma}(Z_{-\alpha})
= Z_{\sigma (-\alpha )} = Z_{-\sigma (\alpha )} =
\theta (\widetilde{\sigma }(Z_\alpha)).
$$
Finally, $\theta|_\fa = -\id$ and it follows that $\widetilde{\sigma }$
and $\theta$ commute. As
\[\fk =\bigoplus_{\alpha\in\Sigma^+(\fg,\fa)}\R (Z_\alpha +\theta
(Z_{\alpha}))\]
and $\sigma (\Sigma^+(\fg,\fa))=\Sigma^+(\fg,\fa)$ it follows that
$\widetilde{\sigma} (\fk )=\fk$.

As $\sigma'(\Sigma^+(\fg,\fa))=\Sigma^+(\fg,\fa)$ it follows that
$\widetilde{\sigma}(\fn)=\fn$.

As $G$ is assumed to be algebraically simply connected, there is an
automorphism of $G$ with differential $\widetilde{\sigma}$. Denote this
automorphism also by $\widetilde{\sigma}$. It is clear that
$\widetilde{\sigma}$ satisfies the assertions of the Lemma.
\end{proof}

\begin{theorem}\label{th-SphericalFctTildeWInv} Let $G/K$ be a
Riemannian symmetric space of noncompact type with $G$ simple and
algebraically simply connected.  Let $\fa$ be a Cartan subalgebra
of $\fg$ and $\sigma :\fa\to \fa$ a linear isomorphism such
that $\sigma'$ defines an automorphism of the Dynkin diagram of
$\Psi(\fg,\fa)$.  Then for $x\in G$
\[\varphi_\lambda ( \widetilde{\sigma}(x))=\varphi_{\sigma'(\lambda
)}(x)\, .\]
If $f\in L^2(G/K)^K$ is such that $f$ is $\widetilde{\sigma}$-invariant,
then
$\cF (f)$ is $\sigma'$-invariant.
\end{theorem}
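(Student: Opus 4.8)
The plan is to reduce everything to the automorphism $\widetilde{\sigma}$ supplied by Lemma \ref{le-AutDynkDiagrG}, which preserves $N$, $A$, and $K$ and restricts to $\sigma$ on $\fa$, and then to track how it interacts with the Iwasawa projection $a(\cdot)$ appearing in the integral formula \eqref{def-spherical} for $\varphi_\lambda$. The single algebraic input I will need beyond bookkeeping is the invariance $\sigma'\rho=\rho$.

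First I would record the equivariance $a(\widetilde{\sigma}(x))=\widetilde{\sigma}(a(x))$. Writing $x=n\,a(x)\,k$ with $n\in N$, $k\in K$ and applying $\widetilde{\sigma}$, the three factors land in $N$, $A$, $K$ respectively (using $\widetilde{\sigma}(N)=N$, $\widetilde{\sigma}(K)=K$, and $\widetilde{\sigma}(A)=A$, the last because $A=\exp\fa$ and $\widetilde{\sigma}|_\fa=\sigma$); uniqueness in $G=NAK$ gives the claim. Since $\log\widetilde{\sigma}(a)=\sigma(\log a)$, the definition $\chi_\mu(a)=e^{\mu(\log a)}$ and the definition of the transpose then give $\chi_\mu(\widetilde{\sigma}(a))=\chi_{\sigma'\mu}(a)$.

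Next I would compute $\varphi_\lambda(\widetilde{\sigma}(x))$ from \eqref{def-spherical}. Substituting $k\mapsto\widetilde{\sigma}(k')$ in the integral over $K$—legitimate because $\widetilde{\sigma}|_K$ is an automorphism of the compact group $K$ and hence preserves normalized Haar measure—turns $a(k\widetilde{\sigma}(x))$ into $a(\widetilde{\sigma}(k'x))=\widetilde{\sigma}(a(k'x))$, and the character identity converts $\chi_{\lambda-\rho}$ into $\chi_{\sigma'(\lambda-\rho)}$. To conclude I need $\sigma'(\lambda-\rho)=\sigma'\lambda-\rho$, i.e. $\sigma'\rho=\rho$. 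This is the one step where the real structure is essential: because $\fa$ is a Cartan subalgebra, $\fg$ is split, so every multiplicity $m_\alpha=1$ and $\rho=\tfrac12\sum_{\alpha\in\Sigma^+(\fg,\fa)}\alpha$; a Dynkin diagram automorphism permutes the simple roots and hence permutes $\Sigma^+(\fg,\fa)$, fixing this half-sum. This yields $\varphi_\lambda(\widetilde{\sigma}(x))=\varphi_{\sigma'\lambda}(x)$. I regard $\sigma'\rho=\rho$ as the genuine content; the rest is manipulation of the Iwasawa decomposition.

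For the statement on $\cF(f)$ I would change variables by $\widetilde{\sigma}$ in $\widehat{f}(\lambda)=\int_M f(x)\varphi_{-\lambda}(x)\,dx$. The map $\widetilde{\sigma}$ descends to $G/K$ and is a Riemannian isometry: its differential at the base point is $\widetilde{\sigma}|_\fs$, which preserves the metric since automorphisms preserve the Killing form and $\widetilde{\sigma}$ commutes with $\theta$ (so $\widetilde{\sigma}(\fs)=\fs$); hence $\widetilde{\sigma}$ preserves the invariant measure $dx$. Replacing $x$ by $\widetilde{\sigma}(x)$ and using $f\circ\widetilde{\sigma}=f$ together with Part 1 in the form $\varphi_{-\lambda}\circ\widetilde{\sigma}=\varphi_{-\sigma'\lambda}$ gives $\widehat{f}(\lambda)=\widehat{f}(\sigma'\lambda)$, i.e. $\cF(f)$ is $\sigma'$-invariant. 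For general $f\in L^2(M)^K$ I would first run this on the dense subspace $C_c^\infty(M)^K$ and then pass to the limit: $f\mapsto f\circ\widetilde{\sigma}$ is unitary on $L^2(M)^K$, and $\cF$ intertwines it with the unitary $g\mapsto g\circ\sigma'$ on the spectral side, which is well defined because $\sigma'$ preserves $\Sigma^+(\fg,\fa)$ and hence the Plancherel density and $W(\fg,\fa)$. The only points requiring care are this measure-preservation together with the $L^2$ extension; once Part 1 is in hand the remaining algebra is immediate.
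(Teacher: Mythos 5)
Your proposal is correct and follows essentially the same route as the paper: both reduce to the automorphism $\widetilde{\sigma}$ of Lemma \ref{le-AutDynkDiagrG}, use the equivariance $a(\widetilde{\sigma}(x))=\widetilde{\sigma}(a(x))$ from the Iwasawa decomposition together with invariance of Haar measure on $K$ under $\widetilde{\sigma}$ and the identity $\sigma'\rho=\rho$, and then deduce the Fourier transform statement by a change of variables on $G/K$ plus density of $C_c^\infty(G/K)^K$. Your explicit justifications of $\sigma'\rho=\rho$ (via the split condition $m_\alpha=1$) and of the $\widetilde{\sigma}$-invariance of the measure on $G/K$ merely fill in details the paper leaves implicit.
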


\begin{proof} Let $\widetilde{\sigma}: G\to G$ be the automorphism of
from Lemma \ref{le-AutDynkDiagrG}.
As $\widetilde{\sigma}(K)=K$, $\widetilde{\sigma}(A)=A$,
and $\widetilde{\sigma}(N)=N$, it follows that if $x=n(x)a(x)k(x)$
is the
Iwasawa decomposition of $x\in G$, then
\[\widetilde{\sigma}(x)=\widetilde{\sigma}(n(x))
\widetilde{\sigma}(a(x))\widetilde{\sigma}(k(x))\]
is the corresponding decomposition of $\widetilde{\sigma}(x)$. By
(\ref{def-spherical}) and the fact that $\sigma'(\rho)=\rho$
we get
for $x\in G$:
\begin{eqnarray*}
\varphi_\lambda (\widetilde{\sigma}(x))&=&
\int_K \chi_{\lambda-\rho}(a(k\widetilde{\sigma}(x)))\, dk\\
&=&\int_K \chi_{\lambda-\rho}(\widetilde{\sigma}( a(kx))) \, dk\\
&=&\int_K \chi_{\sigma'(\lambda-\rho)}(a(kx))\, dk
=\varphi_{\sigma'(\lambda )}(x)
\end{eqnarray*}
where we have used that the Haar measure on $K$ is invariant under
$\widetilde{\sigma}|_K$.

The first statement follows now by applying this to $x=\exp (H)$,
$H\in\fa$.  Using $f\circ \widetilde{\sigma}=f$, the
second statement follows because
$G$-invariant measure on $G/K$ is is $\widetilde{\sigma}$-invariant
and we can assume that $f\in C_c^\infty (G/K)$.
\end{proof}

Fix a positive definite $K$--invariant bilinear form
$\langle \cdot ,\cdot \rangle $ on $\fs$. It defines an
invariant Riemannian structure on $M$ and hence also an invariant
metric $d(x,y)$. Let $x_o=eK\in M$ and for $r>0$ denote by $B_r=B_r(x_o)$
the closed ball
\[B_r =\{x\in M\mid d(x,x_o)\leqq r\}\, .\]
Note that $B_r$ is $K$--invariant. Denote by $C_r^\infty (M)^K$ the space of
smooth $K$--invariant functions on $M$ with support in $B_r$.
The restriction map $f\mapsto f|_A$ is a bijection from
$C_r^\infty (M)^K$ onto $C_r^\infty (A)^{W(\fg,\fa )}$ (using the
obvious notation).  Define
$\widetilde{W}(\fg,\fa)$  as
just before Theorem \ref{th-AdmExtG/K}. Let
$C_{r,\widetilde{W}(\fg,\fa)}^\infty (M)^K$ be the preimage
of $C_r^\infty (A)^{\widetilde{W}(\fg,\fa)}$ in $C_r^\infty (M)^K$.
In particular $C_{r,\widetilde{W}(\fg,\fa)}^\infty (M)^K$ is just $C^\infty_r(M)^K$
if there is no irreducible factor for which $\Psi_{1/2}(\fg,\fa)$ is of
type $D$.
In the case $\Psi_{1/2}(\fg,\fa)$ is of type $D$ we can replace $G$ by
$G_{\widetilde{\sigma}}=G\ltimes \{1,\widetilde{\sigma}\}$ and $K$ by
$K_{\widetilde{\sigma}}=K\ltimes \{1,\widetilde{\sigma}\}$.
Then $M=G_{\widetilde{\sigma}}/K_{\widetilde{\sigma}}$ and
$C_{r,\widetilde{W}(\fg,\fa)}^\infty (M)^K=C_{r}^\infty (M)^{K_{\widetilde{\sigma}}}$.
This corresponds to replacing $\SO (2j,\C)$ by $\mathrm{O}(2j,\C)$. In that
case we choose
\[\fa =\left\{\left.\begin{pmatrix} t_1 X & & \\
&\ddots & \\
& & t_n X\end{pmatrix}\, \right| t_1,\ldots ,t_n\in\R\right\}\text{ where } X=
\begin{pmatrix} 0 & 1\\ -1 & 0\end{pmatrix},\] and then
then $\widetilde{\sigma}$ is conjugation by $\diag (1,\ldots ,1, -1)$.

Denote by $\PW_r (\fa^*_\C)$ the Paley-Wiener space on $\fa_\C^*$ and by
$\PW_r(\fa^*_\C)^{\widetilde{W}(\fg,\fa)}$ the space of $\widetilde{W}(\fg,\fa)$--invariant functions
in $\PW_r(\fa^*_\C)$.

The following is a simple modification
of the Paley-Wiener theorem of Helgason \cite{He1966,He1984} and Gangolli
\cite{Ga1971}; see \cite{OP2004} for a short overview.

\begin{theorem}[The Paley-Wiener Theorem]\label{th-IsomorphismNonCompact1} The Fourier
transform defines  bijections
$$
 C^\infty_r (M)^K \cong \PW_r(\fa_\C)^{W(\fg,\fa) } \text{ and }
C_{r,\widetilde{W}(\fg,\fa)}^\infty (M)^K \cong
\PW_r(\fa_\C )^{\widetilde{W}(\fg,\fa)}\, .
$$
\end{theorem}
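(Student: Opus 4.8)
The plan is to derive both bijections from the classical Helgason--Gangolli Paley--Wiener theorem, which gives the first bijection $\cF : C^\infty_r(M)^K \to \PW_r(\fa^*_\C)^{W(\fg,\fa)}$ outright (here $\fa^*_\C$ carries the spectral parameter $\lambda$, identified with $\fa_\C$ via the fixed inner product as in the statement). If no irreducible factor of $M$ has $\Psi_{1/2}(\fg,\fa)$ of type $D$, then $\widetilde W(\fg,\fa)=W(\fg,\fa)$ by definition and the two assertions coincide. Since $\widetilde W(\fg,\fa)$ differs from $W(\fg,\fa)$ only through the extra sign changes attached to the type-$D$ factors, and these act on pairwise orthogonal summands of $\fa$, it suffices to adjoin one such sign change $\sigma$ at a time; I therefore reduce to a single irreducible factor with $\Sigma_{1/2}(\fg,\fa)$ of type $D$. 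By the classification recalled above such a factor is, up to covering, either the split space $\SO_o(p,p)/(\SO(p)\times\SO(p))$ or the complex space $\SO(2j,\C)/\SO(2j)$, and in either case $\widetilde W(\fg,\fa)=W(\fg,\fa)\cup W(\fg,\fa)\sigma$, where the transpose $\sigma'$ of the extra sign change is the nontrivial automorphism of the Dynkin diagram of $\Psi(\fg,\fa)$.

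Next I would realize $\sigma$ by an automorphism of $G$. In the split case $\fa$ is a Cartan subalgebra of $\fg$, so Lemma \ref{le-AutDynkDiagrG} produces $\widetilde\sigma:G\to G$ with $\widetilde\sigma|_\fa=\sigma$, $\widetilde\sigma(K)=K$, and $\widetilde\sigma(N)=N$; in the complex case $\widetilde\sigma$ is instead the explicit conjugation by $\diag(1,\dots,1,-1)$ described before the statement, which has the same three properties. Because $\widetilde\sigma$ is an automorphism of $\fg$ commuting with $\theta$, it preserves $\fs$ together with the Killing form on $\fs$; as $\fs$ is $K$-irreducible the chosen $K$-invariant inner product is a multiple of that form, so $\widetilde\sigma$ descends to an isometry of $M$ fixing the base point $x_o$. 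Hence $\widetilde\sigma$ preserves each ball $B_r$ and commutes with $K$-invariance, and $f\mapsto f\circ\widetilde\sigma^{-1}$ is a well-defined involution of $C^\infty_r(M)^K$.

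The heart of the argument is an intertwining identity. Since $\widetilde\sigma$ preserves $K$, $A$, $N$, satisfies $\widetilde\sigma|_\fa=\sigma$, and fixes $\rho$ (the positive multiplicities are constant in both type-$D$ cases and $\sigma'$ permutes the positive roots), the computation in the proof of Theorem \ref{th-SphericalFctTildeWInv} yields $\varphi_\lambda(\widetilde\sigma(x))=\varphi_{\sigma'(\lambda)}(x)$. Using invariance of the measure on $M$ under $\widetilde\sigma$, for $f\in C^\infty_r(M)^K$ I then compute
\[
\cF(f\circ\widetilde\sigma^{-1})(\lambda)
=\int_M f(\widetilde\sigma^{-1}x)\,\varphi_{-\lambda}(x)\,dx
=\int_M f(y)\,\varphi_{-\lambda}(\widetilde\sigma y)\,dy
=\int_M f(y)\,\varphi_{-\sigma'(\lambda)}(y)\,dy
=\widehat f(\sigma'(\lambda)).
\]
Thus $\cF$ conjugates the $\widetilde\sigma$-action on functions into the $\sigma'$-action on $\PW_r(\fa^*_\C)$, and the injectivity supplied by the first bijection shows that $f$ is $\widetilde\sigma$-invariant if and only if $\widehat f$ is $\sigma'$-invariant.

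To finish, recall that $C^\infty_{r,\widetilde W(\fg,\fa)}(M)^K=C^\infty_r(M)^{K_{\widetilde\sigma}}$ consists exactly of the $\widetilde\sigma$-invariant members of $C^\infty_r(M)^K$ (equivalently, those $f$ whose restriction $f|_A$ is $\sigma$-, hence $\widetilde W$-, invariant, via the Cartan decomposition $G=KAK$), while a $W(\fg,\fa)$-invariant $F\in\PW_r(\fa^*_\C)$ lies in $\PW_r(\fa^*_\C)^{\widetilde W(\fg,\fa)}$ precisely when it is in addition $\sigma'$-invariant. Hence the first bijection restricts to a bijection between these two subspaces, giving the second assertion. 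The main obstacle I anticipate is the bookkeeping at the boundary between the two type-$D$ realizations: producing $\widetilde\sigma$ with the required properties in the complex case, where $\fa$ is not a Cartan subalgebra of $\fg$ and Lemma \ref{le-AutDynkDiagrG} does not apply verbatim, and checking there that $\sigma'(\rho)=\rho$ and the measure invariance persist, so that the spherical-function identity from Theorem \ref{th-SphericalFctTildeWInv} still holds.
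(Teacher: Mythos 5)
Your proposal is correct and follows the paper's strategy in outline: reduce to the irreducible factors where $\Sigma_{1/2}(\fg,\fa)$ is of type $D$ (namely $\SO_o(j,j)/\SO(j)\times\SO(j)$ and $\SO(2j,\C)/\SO(2j)$), adjoin a single odd sign change $\sigma$ realized as a Dynkin diagram automorphism, and show that the Fourier transform carries $\widetilde\sigma$-invariance of $f$ to $\sigma'$-invariance of $\widehat f$. For the split case your argument is exactly the paper's: Lemma \ref{le-AutDynkDiagrG} plus Theorem \ref{th-SphericalFctTildeWInv}. Where you genuinely diverge is the complex case $\SO(2j,\C)/\SO(2j)$. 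The paper there avoids constructing any automorphism of $G$ at all: it invokes the closed Harish-Chandra formula
$\varphi_\lambda (a)=\frac{\varpi (\rho )}{\varpi (\lambda )}\,
\frac{\sum_{w\in W}(\det w)a^{w^\prime \lambda }}{\prod_{\alpha\in\Sigma^+}(a^{\alpha/2}-a^{-\alpha/2})}$,
valid because $G$ is a complex group, and checks $\varphi_\lambda(\exp w(H))=\varphi_{w'\lambda}(\exp H)$ for $w\in\widetilde W$ by direct computation (both $\varpi$ and the denominator change by $\det w$ under the full group of permutations and sign changes). You instead use the explicit conjugation by $\diag(1,\dots,1,-1)$ that the paper introduces just before the theorem, and rerun the intertwining computation of Theorem \ref{th-SphericalFctTildeWInv}. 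Your route is more uniform across the two cases, but it obliges you to verify by hand the Iwasawa compatibility ($\widetilde\sigma(N)=N$, $\sigma'(\rho)=\rho$, measure invariance) for this particular element of $\mathrm{O}(2j,\C)\setminus\SO(2j,\C)$, which is exactly the bookkeeping you flag at the end; these checks do go through since $\sigma'$ permutes $\Sigma^+(\fg,\fa)$ and preserves multiplicities. The paper's use of the explicit formula buys a two-line verification at the price of being special to complex groups. Both arguments are valid, and your reduction of the theorem to the equivalence ``$f$ is $\widetilde\sigma$-invariant iff $\widehat f$ is $\sigma'$-invariant'' is the same reduction the paper makes implicitly via the identity $\varphi_\lambda(\exp w(H))=\varphi_{w'\lambda}(\exp H)$.
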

\begin{proof} For the proof we need only consider factors of type
$\SO (2j,\C)/\SO (2j)$ and of type $\SO (j,j)/\SO (j)\times \SO (j)$.
It is enough to show that in both cases we have
\[\varphi_\lambda (\exp (w(H)))=\varphi_{w^\prime \lambda}(\exp H)\]
for all $w\in\widetilde{W}(\fg,\fa)$. This is well
known for $w\in W (\fg,\fa)$. Note that in both
cases the root system $\Sigma (\fg,\fa)=\Delta (\fg,\fh)$ is of type $D$.

In the first case, the spherical function is given (see \cite{He1966},
p. 432) by
\[\varphi_\lambda (a)=
\frac{\varpi (\rho )}{\varpi (\lambda )}
\frac{\sum_{w\in W(\fg,\fa )}(\det w)a^{w^\prime \lambda }
}{\prod_{\alpha\in\Sigma^+(\fg,\fa )}(a^{\alpha/2}-a^{-\alpha/2})}\,\,
\text{ where }\,\,
\varpi (\lambda )=\prod_{\alpha\in\Delta^+} \langle\lambda ,\alpha
\rangle\, ,\]
and the claim follows by direct calculation.

For the second case
we recall first that the Weyl group contains all \textit{even} sign
changes
and permutations, whereas
$\widetilde{W}(\fg,\fa)$ contains all sign changes and the permutations.
Thus, fixing one element $\sigma \in\widetilde{W}(\fg,\fa)\setminus
W(\fg,\fh)$
we have
\[\widetilde{W}(\fg,\fa)= W(\fg,\fa)\cup \sigma W(\fg,\fa)
\]
Using the notation for the root system $D_j$ in Section \ref{sec2} we
take
\[\sigma (f_1)=-f_1\quad\text{and}\quad \sigma (f_k)=f_k\, ,\quad
k=2,\ldots ,j\, .\]
Then $\sigma (\alpha_1)=\alpha_2$, $\sigma (\alpha_2)=\alpha_1$ and
$\sigma (\alpha_i)
=\alpha_i$ for all $i\ge 3$. Thus $\sigma$ defines an automorphism of
the Dynkin diagram for $\Psi (\fg,\fa)$. The statement follows now from
Theorem \ref{th-SphericalFctTildeWInv}.
\end{proof}
We assume now that $M_k$ propagates $M_n$, $k\geqq n$. The index $j$ refers
to the symmetric space $M_j$, for a function $F$ on $\fa_{k,\C}^*$ let $P_{k,n}(F):=
F|_{\fa_{n,\C}}$. We fix a compatible $K$--invariant
inner products on $\fs_n$ and $\fs_k$, i.e., for all $X,Y\in\fs_n\subseteqq \fs_k$ we have
\[\langle X,Y\rangle_k=\langle X,Y\rangle_n\, .\]
\begin{theorem}[Paley-Wiener Isomorphisms]\label{th-IsomorphismNonCompact2}
Assume that $M_k$ propagates $M_n$. Let $r>0$. Then the following holds:
\begin{enumerate}
\item The map $P_{k,n}:
\PW_r (\fa_{k,\C}^*)^{\widetilde{W}(\fg_k,\fa_k)} \to
\PW_r(\fa_{n,\C}^*)^{\widetilde{W}(\fg_n,\fa_n)}$ is surjective.
\item The map $C_{k,n}=\cF^{-1}_n\circ P_{k,n}\circ \cF_k:C^\infty_{r,\widetilde{W}(\fg_k,\fa_k)}
(M_k)^{K_k}
\to C^\infty_{r,\widetilde{W}(\fg_n,\fa_n)}(M_n)^{K_n}$ is surjective.
\end{enumerate}
\end{theorem}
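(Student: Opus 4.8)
The plan is to read off part (1) from the abstract criterion of Theorem \ref{th-IsoPW1}, and then to obtain part (2) by transporting (1) through the Paley--Wiener isomorphisms of Theorem \ref{th-IsomorphismNonCompact1}. Using the compatible $K$--invariant inner products fixed just before the statement, I identify $\fa_{n,\C}^*$ with a subspace of $\fa_{k,\C}^*$; the compatibility $\langle X,Y\rangle_k=\langle X,Y\rangle_n$ for $X,Y\in\fs_n$ is exactly the hypothesis on inner products required in Section \ref{sec1}, so we are in the setting $E=\fa_{n,\C}^*\subseteq\fa_{k,\C}^*=F$ with $W(E)=\widetilde{W}(\fg_n,\fa_n)$, $W(F)=\widetilde{W}(\fg_k,\fa_k)$, and with $P_{k,n}$ the restriction map $F\mapsto F|_{\fa_{n,\C}^*}$.

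The one point in applying Theorem \ref{th-IsoPW1} that deserves a line of justification is that the extended groups $\widetilde{W}$ really are finite reflection groups, since the proof of Theorem \ref{th-IsoPW1} invokes Rais's Theorem \ref{th-rais}, which depends on $\rS(F)$ being a free module over the invariants (Chevalley). When $\Psi_{1/2}$ is not of type $D$ there is nothing to check because $\widetilde{W}=W$; when $\Psi_{1/2}$ is of type $D$ the group $\widetilde{W}$ is by construction the Weyl group of the associated type $B/C$ root system (all permutations together with all sign changes), again a reflection group. Granting this, hypothesis (1) of Theorem \ref{th-IsoPW1}, namely $\widetilde{W}_{\fa_n}(\fg_k,\fa_k)|_{\fa_n}=\widetilde{W}(\fg_n,\fa_n)$, and hypothesis (2), the surjectivity of $\rI_{\widetilde{W}(\fg_k,\fa_k)}(\fa_k)\to\rI_{\widetilde{W}(\fg_n,\fa_n)}(\fa_n)$, are precisely the two assertions of Theorem \ref{th-AdmExtG/K}(3), which is already stated for arbitrary (possibly reducible) propagating pairs. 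Theorem \ref{th-IsoPW1} then gives the surjectivity of $P_{k,n}$, which is part (1).

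For part (2), Theorem \ref{th-IsomorphismNonCompact1} furnishes bijections $\cF_k:C^\infty_{r,\widetilde{W}(\fg_k,\fa_k)}(M_k)^{K_k}\cong\PW_r(\fa_{k,\C}^*)^{\widetilde{W}(\fg_k,\fa_k)}$ and similarly $\cF_n$ at level $n$. The equality $\widetilde{W}(\fg_n,\fa_n)=\widetilde{W}_{\fa_n}(\fg_k,\fa_k)|_{\fa_n}$ from Theorem \ref{th-AdmExtG/K}(3) shows that restricting a $\widetilde{W}(\fg_k,\fa_k)$--invariant Paley--Wiener function to $\fa_{n,\C}^*$ produces a $\widetilde{W}(\fg_n,\fa_n)$--invariant function, so $C_{k,n}=\cF_n^{-1}\circ P_{k,n}\circ\cF_k$ is well defined with values in $C^\infty_{r,\widetilde{W}(\fg_n,\fa_n)}(M_n)^{K_n}$. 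Being the composite of the bijection $\cF_k$, the surjection $P_{k,n}$ of part (1), and the bijection $\cF_n^{-1}$, it is surjective.

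The proof is short because all of the real content has been placed upstream. The only substantive input is the invariant--restriction surjectivity of Theorem \ref{th-AdmExtG/K}(3); the sole genuine verification inside this proof is the reflection--group property of $\widetilde{W}$ in the type $D$ case, needed so that Rais's theorem, and hence Theorem \ref{th-IsoPW1}, apply. I do not anticipate any further obstacle, beyond checking that the identification $\fa_{n,\C}^*\hookrightarrow\fa_{k,\C}^*$ coming from the compatible inner products is the one implicit in the definition of $P_{k,n}$, so that $P_{k,n}$ is literally the restriction map to which Theorem \ref{th-IsoPW1} refers.
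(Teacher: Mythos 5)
Your proposal is correct and follows exactly the route of the paper, whose entire proof is the one line ``This follows from Theorem \ref{th-IsoPW1} and Theorem \ref{th-IsomorphismNonCompact1}''; you have simply made explicit that the hypotheses of Theorem \ref{th-IsoPW1} are supplied by Theorem \ref{th-AdmExtG/K}(3). Your added observation that $\widetilde{W}$ in the type $D$ case is the reflection group of the associated $B/C$ system, so that Rais's theorem still applies, is a detail the paper leaves implicit and is worth recording.
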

\begin{proof} This follows from Theorem \ref{th-IsoPW1} and Theorem \ref{th-IsomorphismNonCompact1}.
\end{proof}

We assume now that $\{M_n,\iota_{k,n}\}$ is a injective system of symmetric spaces
such that $M_k$ is a propagation of $M_n$. Here $\iota_{k,n}: M_n\to M_k$ is the
injection. Let
\[M_\infty =\varinjlim M_n\, .\]

We have also, in a natural way, injective systems
$\fg_n\hookrightarrow \fg_k$, $\fk_n\hookrightarrow \fk_k$, $\fs_n\hookrightarrow \fs_k$,
and $\fa_n\hookrightarrow \fa_k$ giving rise to corresponding injective systems. Let
$$
\mfg_\infty :=\varinjlim \mfg_n\, ,\quad \gk_\infty :=\varinjlim \gk_n\, ,\quad
\gs_\infty :=\varinjlim\gs_n\, , \quad \ga_\infty := \varinjlim\ga_n\, ,
 \quad\text{and} \quad \fh_\infty :=\varinjlim \fh_n\, .$$
Then $\mfg_\infty=\gk_\infty \oplus \gs_\infty$ is the eigenspace decomposition of
$\mfg_\infty $ with respect to the involution $\theta_\infty :=\varinjlim \theta_n$,
$\fa_\infty$ is a maximal abelian subspace of $\fs_\infty$, and $\fh_\infty$ is a Cartan subalgebra
of
$\fg_\infty$.

The restriction maps $\res_{k,n}$ and the
maps from Theorem \ref{th-IsomorphismNonCompact2} define projective systems
$\{I_{\widetilde{W} (\fg_n,\fa_n)}(\fa_n)\}_n$,
$\{\PW_{r}(\fa_{n,\C})^{\widetilde{W}(\fg_n,\fa_n)}\}_n$, and
$\{C_{r,\widetilde{W}(\fg_n,\fa_n)}(M_n)^{K_n}\}_n$.
Let
\begin{eqnarray*}
I_{\infty }(\fa_\infty )&:=&\varprojlim I_{\widetilde{W}(\fg_n,\fa_n)}(\fa_n)\\
\PW_r(\fa^{*}_{\infty, \C} )&:=&\varprojlim \PW_{r}(\fa_{n,\C}^*)^{\widetilde{W} (\fg_n,\fa_n)}\\
C_{r,\infty}^\infty (M_\infty)^{K_\infty}
&:=&\varprojlim C^\infty_{r,\widetilde{W}(\fg_n,\fa_n)}(M_n)^{K_n} \, .
\end{eqnarray*}

We note, that by the explicit construction in Section \ref{sec2},
there is a canonical inclusion $\widetilde{W}(\fg_n,\fa_n)
\stackrel{\iota_{k,n}}{\hookrightarrow}\widetilde{W}_{\fa_n}(\fg_k,\fa_k)$ such
that $\iota_{k,n}(s)|_{\fa_n}=s$. In this way, we get an injective
system $\{\widetilde{W}(\fg_n,\fa_n)\}_n$. Let
\[\widetilde W_\infty =\varinjlim \widetilde{W}(\fg_n,\fa_n)\, .\]
Then we can view $I_{\widetilde{W}_\infty }(\fa_\infty )$ as $\widetilde{W}_\infty$--invariant
polynomials on $\fa_\C^{\infty *}$ and $\PW_r(\fa^{*}_{\infty , \C })$ as
$\widetilde{W}_\infty$--invariant functions on $\fa_\C^{\infty *}$.
The projective limit $C_{r,\infty}^\infty (M_\infty)^{K_\infty}$ consists of
functions on on $A_\infty =\varinjlim A_n$, where $A_n=\exp \fa_n$.  In
Section \ref{sec8} we discuss a direct limit function space on $M_\infty$
that is more closely related to the representation theory of $G_\infty$.

For $\mathbf{f}=(f_n)_n\in
C_{r,\infty}^\infty (M_\infty)^{K_\infty}$ define $\cF_\infty (\mathbf{f})
\in
\PW_r(\fa^{*}_{\infty ,\C} )$ by
\begin{equation}
\cF_\infty (\mathbf{f}):=\{\cF_n(f_n)\}\, .
\end{equation}
Simplify the notation by setting $\widetilde{W}_n=\widetilde{W}(\fg_n,\fa_n)$.
Then $\cF_\infty (\mathbf{f})$ is well defined by Theorem \ref{th-IsomorphismNonCompact2}
and we have a commutative diagram
$$\xymatrix{\cdots & C^\infty_{r,\widetilde{W}_n}(M_n)^{K_n}\ar[d]_{\cF_n}&
C^\infty_{r,\widetilde{W}_{n+1}}(M_{n+1})^{K_{n+1}}
\ar[d]_{\cF_{n+1}}\ar[l]_(0.55){C_{n+1,n}}&\ar[l]_(0.2){C_{n+2,n+1}}
\quad\qquad \cdots &
C_{r, \infty}^\infty (M_\infty)^{K_\infty}\ar[d]_{\cF_\infty}
 \\
\cdots & \PW_r(\fa^*_{n,\C})^{\widetilde{W}_n}&  \PW_r(\fa^*_{n+1,\C})^{\widetilde{W}_{n+1}}
 \ar[l]^(0.55){P_{n+1,n}}
&\ar[l]^(0.2){P_{n+2,n+1}} \quad\qquad \cdots & \PW_r(\fa^*_{\infty, \C} )\\
}
$$

\begin{theorem}[Infinite dimensional Paley-Wiener Theorem]\label{th-ProjLimNonCompact}
Let the notation be as above. Then
$\PW_r(\fa^*_{\infty,\C})\not=\{0\}$, $C_{r,\infty}^\infty (M_\infty)^{K_\infty}\not=\{0\}$
and
\[\cF_\infty :C_{r,\infty}^\infty (M_\infty)^{K_\infty}
\to \PW_r(\fa^*_{\infty,\C})\]
is a linear isomorphism.
\end{theorem}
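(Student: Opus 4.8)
The plan is to reduce the entire statement to the finite-level Paley--Wiener isomorphisms of Theorem \ref{th-IsomorphismNonCompact1}, combined with the surjectivity of the transition maps from Theorem \ref{th-IsomorphismNonCompact2}, by treating $\varprojlim$ as a functor. First I would dispose of the isomorphism claim, which is essentially formal. By Theorem \ref{th-IsomorphismNonCompact1} each $\cF_n : C^\infty_{r,\widetilde{W}_n}(M_n)^{K_n} \to \PW_r(\fa^*_{n,\C})^{\widetilde{W}_n}$ is a linear bijection, and the commutative diagram displayed just before the theorem records exactly the intertwining relation $P_{n+1,n}\circ\cF_{n+1}=\cF_n\circ C_{n+1,n}$. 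Thus $\{\cF_n\}$ is a morphism of projective systems that is bijective at every level, so it passes to a bijection of the limits. Concretely, $\cF_\infty$ sends a compatible sequence $(f_n)$ to $(\cF_n(f_n))$, which again lies in $\PW_r(\fa^*_{\infty,\C})$ by the diagram; it is injective since each $\cF_n$ is; and its inverse is $(F_n)\mapsto(\cF_n^{-1}(F_n))$, which is well defined into $C_{r,\infty}^\infty (M_\infty)^{K_\infty}$ because applying $\cF_n^{-1}$ and $\cF_{n+1}^{-1}$ to the diagram yields $C_{n+1,n}\circ\cF_{n+1}^{-1}=\cF_n^{-1}\circ P_{n+1,n}$.

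It then remains to show the limits are nonzero, and for this I would argue exactly as in the proof of Theorem \ref{th-projectiveLimitPW}. By part (1) of Theorem \ref{th-IsomorphismNonCompact2} each restriction map $P_{k,n}$ is surjective, so I fix an index $j$, choose any nonzero $F_j\in\PW_r(\fa^*_{j,\C})^{\widetilde{W}_j}$, push it down to the lower indices by the transition maps and recursively lift it up through the tower using surjectivity, obtaining a compatible sequence $(F_n)$ whose $j$-th coordinate is $F_j\neq 0$. This is a nonzero element of $\PW_r(\fa^*_{\infty,\C})$. The nonvanishing of $C_{r,\infty}^\infty (M_\infty)^{K_\infty}$ is then immediate from the isomorphism $\cF_\infty$ already established; alternatively it follows from the same recursive construction applied to the surjective maps $C_{k,n}$ of part (2) of Theorem \ref{th-IsomorphismNonCompact2}.

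The substantive work has all been front-loaded into the preparatory results, so no genuine analytic obstacle remains at this stage. I would emphasize two points of care rather than difficulty. First, the well-definedness of $\cF_\infty$ as a map \emph{into} the projective limit is precisely the commutativity of the displayed diagram, which in turn rests on the spherical-function transformation law of Theorem \ref{th-SphericalFctTildeWInv} feeding into Theorem \ref{th-IsomorphismNonCompact1}. Second, I would stress that surjectivity of $\cF_\infty$ at the level of limits is handled entirely by the levelwise bijectivity and does \emph{not} require any statement about surjectivity of the transition maps or a Mittag--Leffler/$\varprojlim^1$ argument; the transition-map surjectivity of Theorem \ref{th-IsomorphismNonCompact2} enters only to guarantee that the limits are nonzero, where a single lifting chain suffices.
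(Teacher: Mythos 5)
Your proof is correct and is precisely the argument the paper intends: the paper states Theorem \ref{th-ProjLimNonCompact} without writing out a proof, leaving it to follow from the levelwise bijections of Theorem \ref{th-IsomorphismNonCompact1}, the surjective transition maps of Theorem \ref{th-IsomorphismNonCompact2} (whose definition $C_{k,n}=\cF_n^{-1}\circ P_{k,n}\circ\cF_k$ makes the displayed diagram commute tautologically), and the recursive lifting of Theorem \ref{th-projectiveLimitPW}, exactly as you have assembled them. Your observation that transition-map surjectivity is needed only for the nonvanishing of the limits, and not for the bijectivity of $\cF_\infty$, is a correct and worthwhile clarification.
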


\section{Central Functions on Compact Lie Groups}\label{sec5}
\noindent
The following results on compact Lie groups are a special case of
the more general statements on compact symmetric spaces discussed
in the next section, as every group can be viewed as a symmetric
space $G\times G/\mathrm{diag}(G)$ via the map
\[(g,1)\mathrm{diag}(G)\mapsto g, \text{ in other words }
    (a,b)\mathrm{diag}(G)\mapsto ab^{-1}\]
corresponding to the involution $\tau (a,b)=(b,a)$. The action of
$G\times G$ is the left-right action $(L\times R)(a,b)\cdot x=
axb^{-1}$ and the $\textrm{diag}(G)$--invariant functions
are the \textit{central} functions $f(axa^{-1})= f(x)$ for
all $a,x\in G$.
Thus $f$ is central if and only if $f\circ \Ad (a)=f$ for
all $a\in G$, where as usual $\Ad (a)(x)=axa^{-1}$. From now on,
if $E$ is a function space on $G$, then $E^G$ denotes the space of
central functions in $E$.
But, because of the special role played by the group and the central
functions, it is worthwhile to discuss this case separately.

In this section $G$, $G_n$ and $G_k$ will
denote a compact connected semisimple Lie group. For simplicity, we will
assume that those groups are simply connected. The only change that need
to be made for the general case is to change the semi-lattice of
highest weights of irreducible representations and the injectivity
radius, whose numerical value does not play an important rule in the following.
We say that $G_k$ propagates $G_n$ if $\fg_k$ propagates $\fg_n$.
This is the same as saying that $G_k$ propagates $G_n$ as a symmetric space.
We fix a Cartan subalgebra $\fh_k$ of $\fg_k$ such that $\fh_n:=
\fh_k\cap \fg_n$ is a Cartan subalgebra of $\fg_n$. We use the notation
from the previous section. In the following we will introduce notations
for $G$. The index $n$ respectively $k$ will then denote the corresponding
object for $G_n$ respectively $G_k$. We fix an inner product
$\langle \cdot ,\cdot \rangle_n$ respectively $\langle \cdot ,\cdot \rangle_k$
on $\fg_n$ respectively $\fg_k$ such that
$\langle X,Y\rangle_n=\langle X,Y\rangle_k$ for $X,Y\in \fg_n\subseteqq \fg_k$.
This can be done by viewing $G_n\subset G_k$ as locally isomorphic to  linear groups
and use the trace form $X,Y\mapsto -\Tr(XY)$.
We denote by $R$ the injectivity radius; Theorem
\ref{inj-radius} below show that the injectivity radius is the same for
$G_n$ and $G_k$.

The following is a reformulation of results of Crittenden \cite{crit}.
\begin{theorem}\label{rjcrit}
The minimum locus and the first conjugate locus of $G$ coincide and are
given by $\Ad(G)\gf$ where
$\gf = \{X \in \fh \mid \max_{\alpha \in \Delta}|\alpha(X)| = 2\pi\}$,
and the injectivity radius $R = \min_{X \in \gf}\Vert X \Vert$.
\end{theorem}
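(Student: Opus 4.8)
\emph{The plan is} to compute the first conjugate locus directly from the root data and then invoke Crittenden's theorem to identify it with the minimum (cut) locus. Throughout I equip $\fg$ with the $\Ad(G)$-invariant inner product $\langle X,Y\rangle = -\Tr(XY)$ and the associated bi-invariant metric on $G$; this makes $G$ a Riemannian (globally) symmetric space, left translations act by isometries, and the geodesics through $e$ are exactly the one-parameter subgroups $t\mapsto \exp(tX)$, $X\in\fg$. By homogeneity the injectivity radius is the same at every point, hence equals the distance from $e$ to the cut locus, so it suffices to describe the tangent cut locus as a subset of $\fg$.

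\emph{First I would} analyze conjugate points along $\gamma_X(t)=\exp(tX)$. Because $G$ is symmetric the curvature is parallel, so in a parallel frame the Jacobi operator is the constant endomorphism $\cR_X:=-\tfrac14(\ad X)^2$ of $\fg$ (obtained from the curvature of a bi-invariant metric together with $\nabla_Y Z=\tfrac12[Y,Z]$ on left-invariant fields); note $\langle \cR_X Y,Y\rangle=\tfrac14\|[X,Y]\|^2\ge 0$ since $\ad X$ is skew-symmetric. As $\ad X$ is skew, its eigenvalues on $\fg_\C$ are purely imaginary; for $X\in\fh$ they are $0$ together with the numbers $\alpha(X)$, $\alpha\in\Delta$, acting on the root spaces $\fg_\alpha$. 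Writing $\alpha(X)=\pm i\,|\alpha(X)|$, the operator $\cR_X$ has eigenvalues $\tfrac14|\alpha(X)|^2$, so a Jacobi field vanishing at $t=0$ in the corresponding eigendirection is a multiple of $\sin\!\big(\tfrac12|\alpha(X)|\,t\big)$, whose first positive zero is $t=2\pi/|\alpha(X)|$. Hence the first conjugate point along $\gamma_X$ occurs at $t^\ast=2\pi/\max_{\alpha\in\Delta}|\alpha(X)|$, and $\exp(X)$ (the value $t=1$) is the first conjugate point precisely when $\max_{\alpha}|\alpha(X)|=2\pi$, i.e. when $X\in\gf$. Since every element of $\fg$ is $\Ad(G)$-conjugate to an element of $\fh$ (maximal torus theorem) and each $\Ad(g)$ is an isometry commuting with $\exp$, the full first conjugate locus, viewed in $\fg$, is $\Ad(G)\gf$.

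\emph{Next I would} invoke Crittenden's theorem \cite{crit}, which for a simply connected compact $G$ asserts that the minimum locus and the first conjugate locus coincide; granting it, the tangent cut locus is exactly $\Ad(G)\gf$. I expect this coincidence to be the main obstacle: in general the cut locus lies among the first conjugate points \emph{and} the points joined by two distinct minimizing geodesics, and their agreement is special to this homogeneous setting, so I would rely on Crittenden's analysis rather than reprove it.

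\emph{Finally}, the injectivity radius is the length of the shortest geodesic segment from $e$ to the cut locus, namely $R=\min\{\Vert X\Vert : X\in\Ad(G)\gf\}$. Because $\Ad(g)$ preserves $\langle\cdot,\cdot\rangle$ we have $\Vert\Ad(g)X\Vert=\Vert X\Vert$, so the minimum is already attained on $\gf$, giving $R=\min_{X\in\gf}\Vert X\Vert$ as claimed. The remaining points---that $\gamma_X$ is minimizing on $[0,1]$ for an $X\in\gf$ of minimal norm (immediate once cut $=$ first conjugate), and the bookkeeping identifying $\max_\alpha|\alpha(X)|$ with the largest eigenvalue modulus of $\ad X$---are routine.
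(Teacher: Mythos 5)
Your proof is correct and follows essentially the same route as the paper: identify the first (tangent) conjugate locus as $\Ad(G)\gf$, invoke Crittenden's theorem that the minimum locus equals the first conjugate locus, and read off $R$ using the $\Ad(G)$-invariance of the norm. The one difference is how the conjugate locus is obtained. The paper simply translates Crittenden's Theorem 3, observing that his ``angular parameters'' are $2\pi i$ times the roots, so that $X\in\fh$ lies in the conjugate locus exactly when some $|\alpha(X)|$ is a nonzero multiple of $2\pi$; you instead rederive this from scratch via the Jacobi equation for the bi-invariant metric, using the Jacobi operator $\cR_X=-\tfrac14(\ad X)^2$ with eigenvalues $\tfrac14|\alpha(X)|^2$ on the root spaces and first zeros of $\sin\bigl(\tfrac12|\alpha(X)|t\bigr)$ at $t=2\pi/|\alpha(X)|$. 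Your computation is correct and makes that half of the argument self-contained at the cost of some Riemannian bookkeeping; both versions still rest on Crittenden's Theorem 5 (valid since $G$ is assumed simply connected here) for the coincidence of the minimum and first conjugate loci, which is the genuinely nontrivial input that neither proof attempts to reprove.
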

\begin{remark}\label{also-symmetric}{\rm
Crittenden actually proves the analogous result for symmetric
spaces of compact type.  That will be used in Section \ref{sec7}.}
\hfill $\diamondsuit$
\end{remark}
\begin{proof} The ``roots'' of \cite{crit} are the ``angular parameters''
of Hopf and Stiefel: the formulae of \cite[Section 2]{crit} shows that
they are $2\pi i$ times what we now call roots.  Thus
\cite[Theorem 3]{crit} says that $X \in \fh$ belongs to the conjugate
locus just when there is a root $\alpha \in \Delta$ such that
$|\alpha(X)|$ is a nonzero multiple of $2\pi$.  So $X \in \fh$ belongs to
the first conjugate locus just when there is a root $\alpha \in \Delta$
such that $|\alpha(X)| = 2\pi$ but there is no root $\beta \in \Delta$
such that $|\beta(tX)| = 2\pi$ with $|t| < 1$.  In other words the
first conjugate locus is  $\Ad(G)\gf$ as asserted.

The statement of \cite[Theorem 5]{crit} is that the minimum locus is equal
to the first conjugate locus.  Now the injectivity radius $R$ is the minimal
length $\Vert X \Vert = \langle X,X\rangle^{1/2}$ of an element of the
first conjugate locus.  In other words $R = \min_{X \in \gf}\Vert X \Vert$.
\end{proof}

For $\alpha\in \Delta$ let $t_\alpha\in [\fg_\alpha,\fg_{-\alpha}]$
be so that $\alpha (t_\alpha)=2$.  Let $t_1,\ldots ,t_r$ be the
$t_{\alpha_i}$ for the simple roots $\alpha_1,\ldots ,\alpha_r$.
Let $\Gamma = \{H\in \fh \mid \exp H=e\}$.  Then
$\Gamma = \bigoplus_{j=1}^r  2\pi \Z  t_j$\,.
It follows that the injectivity radius is given by
\begin{equation}\label{re-inj-rad}
R=\min_{j=1,\ldots ,r} \pi \|t_j\|\, .
\end{equation}
Now we use (\ref{re-inj-rad}) to run through the four cases,
making Theorem \ref{rjcrit} explicit for our setting.

Here we use the matrix realization notation of (\ref{an}), (\ref{bn}),
(\ref{cn}) and (\ref{dn}), we use the realizations of roots as matrices
as introduced in Section \ref{sec2}, and we use the Riemannian metric on
$G$ defined
by the positive definite inner product $\langle X,Y\rangle = -\trace(XY)$.

$A_n$: We have $(f_i-f_j,f_i-f_j)=2$ hence $t_i=f_i -f_{i+1}$. It follows
that $R=\sqrt{2}\pi$.

$B_n$: The simple roots are the $f_1$ and the $f_i-f_j$.
Hence $t_1=2f_1$, and $t_i= f_i -f_{i+1}$ for $i > 1$.
The realization of $x \in \fh$ as a matrix is
\[x\mapsto \begin{pmatrix} 0 & & \\
& \diag (x) & \\
& & -\diag (x)\end{pmatrix}
\, .\]
Hence
$\|t\|=2\sqrt{2}\quad \text{and}\quad \|t_j\|=2$ and so
$R=2\pi$\,.

$C_n$: The simple roots are $2f_1$ and the $f_j-f_{j-1}$ for $j > 1$.
The realization of $x \in \fh$ as a matrix is
\[x\mapsto \begin{pmatrix} \diag (x) & \\
& -\diag (x) \end{pmatrix}\, .\]
That gives us $t_1=f_1$ and $t_j = f_j-f_{j-1}$ for $j > 1$.
Thus $\|t_1\| = \sqrt{2}$ and $\|t_j\|=2$ for $j > 1$, so
$R=\sqrt{2}\pi$\,.

$D_n$: The realization of $x \in \fh$ as a matrix is the same as for $C_n$.
The simple roots are $f_1+f_2=t_1$ and the $f_j-f_{j-1}=t_j$. Hence
\begin{theorem}\label{inj-radius}
The injectivity radius of the classical compact simply connected Lie groups
$G$, in the Riemannian metric given by the inner product
$\langle X,Y\rangle = -\trace(XY)$ on $\fg$, is $\sqrt{2}\,\pi$ for $SU(m+1)$
and $Sp(m)$, $2\pi$ for $SO(2m)$ and $SO(2m+1)$.  In particular
for each of the four series the injectivity radius $R$ is independent of $m$.
\end{theorem}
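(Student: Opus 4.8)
The strategy is to reduce the statement to the explicit lattice formula (\ref{re-inj-rad}) and then read off the four values by a direct norm computation, completing the $D_n$ case left open above. By Crittenden's Theorem \ref{rjcrit} the injectivity radius equals $\min_{X\in\gf}\|X\|$, which (\ref{re-inj-rad}) rewrites as $R=\min_{j}\pi\|t_j\|$ in terms of the simple coroots $t_1,\dots,t_r$ generating the unit lattice $\Gamma=\ker(\exp|_\fh)=\bigoplus_{j}2\pi\Z\, t_j$. Thus the whole problem becomes the evaluation of $\|t_j\|$ in each of the four classical series, using the inner product $\langle X,Y\rangle=-\trace(XY)$ and the matrix realizations (\ref{an})--(\ref{dn}).

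First I would fix, in each case, the normalization of the trace form induced by the given realization, i.e.\ determine $\|f_i\|$. For $A_n$ this gives $\|f_i-f_j\|^2=2$, so every $t_j=f_j-f_{j+1}$ has $\|t_j\|=\sqrt2$ and hence $R=\sqrt2\,\pi$. For the realizations (\ref{bn}), (\ref{cn}), (\ref{dn}) the paired $\pm\diag(x)$ blocks double the trace, so $\|f_i\|^2=2$. This normalization bookkeeping is the one delicate point I expect, since the realizations (\ref{an})--(\ref{dn}) are written for the complex algebra and the signs must be tracked carefully on the compact real form to get the trace form positive definite with the right scale.

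With $\|f_i\|^2=2$ in hand the three remaining series separate according to which simple root is short and which is long; this is the genuinely content-ful step, and it is exactly what distinguishes $B_n$ from $C_n$ despite their common Weyl group. In $B_n$ the short simple root $f_1$ has the \emph{long} coroot $t_1=2f_1$ with $\|t_1\|=2\sqrt2$, while $t_j=f_j-f_{j+1}$ gives $\|t_j\|=2$; the minimum is $2$, so $R=2\pi$. In $C_n$ the long simple root $2f_1$ has the \emph{short} coroot $t_1=f_1$ with $\|t_1\|=\sqrt2$, the remaining $t_j$ again having norm $2$; the minimum is $\sqrt2$, so $R=\sqrt2\,\pi$. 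Finally, for $D_n$, which I still must finish, the simple roots $f_1+f_2$ and $f_j-f_{j-1}$ all have norm squared $4$, so every coroot satisfies $\|t_j\|=2$ and $R=2\pi$.

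Collecting the four values gives $\sqrt2\,\pi$ for $\SU(m+1)$ and $\Sp(m)$, and $2\pi$ for $\SO(2m+1)$ and $\SO(2m)$. Rank-independence is then immediate: in each series the coroots realizing the minimum, together with their norms, are $f_j-f_{j+1}$, or $f_1$, or $f_1+f_2$, none of which depends on $m$, so $R$ is constant along each of the four sequences.
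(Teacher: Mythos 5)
Your proposal is correct and follows essentially the same route as the paper: Crittenden's theorem reduces the problem to the lattice formula $R=\min_j \pi\|t_j\|$, and the four values are then read off by computing the norms of the simple coroots in the matrix realizations (\ref{an})--(\ref{dn}) under the trace form. Your explicit bookkeeping of the normalization $\|f_i\|^2=2$ coming from the paired $\pm\diag(x)$ blocks, and your completion of the $D_n$ computation, only make explicit what the paper leaves implicit; the computed norms ($\sqrt2$, $2\sqrt2$ vs.\ $2$, $\sqrt2$ vs.\ $2$, and all equal to $2$) agree with the paper's in every case.
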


The invariant measures on $G$, $G_n$ and $G_k$ all are normalized to total
mass $1$.

We start by recalling Gonzalez' Paley-Wiener theorem \cite{Gon} (also
see \cite{OS}). Denote by  $\gL^+(G)\subset i\fh^*$ the set of dominant
integral weights,
\[\gL^+(G)=\left \{\mu\in i\fh^*)\left |
\,\, \tfrac{2 (\mu ,\alpha )}{(\alpha ,\alpha)}\in \Z^+ \text{ for all }
 \alpha \in \Delta^+(\fg_\C,\fh_\C)\right . \right \}\, .\]
For $\mu\in\gL^+(G)$ denote by
$\pi_\mu$ the corresponding representation with highest weight
$\mu$.  As $G$ is assumed
simply connected $\mu \mapsto \pi_\mu$, is a bijection from
$\gL^+(G)$ onto $\widehat{G}$. The representation
space for $\pi_\mu$ is denoted by $V_\mu$.  Let $\chi_\mu=\Tr \circ \pi_\mu$ be the character of $\pi_\mu$
and  $\deg(\mu )=\dim V_\mu$ its dimension. Note that
$\deg (\mu )$ extends to a polynomial function on $\fh_\C^*$.
As Haar measure is normalized to total mass $1$, the characters
$\{\chi_\mu\}_{\mu\in\gL^+(G)}$ form a complete orthonormal set for
$L^2(G)^G:= \{f \in L^2(G) \mid f\circ\Ad (g) = f \text{ for all } g \in G\}$.

For
$f\in C (G)^G$ define the Fourier transform $\cF(f)=\widehat{f}:\gL^+(G)\to \C$ by
$$\widehat f(\mu)= ( f,\chi_\mu )=\int_G f(x)\overline{\chi_\mu(x)}\,dx, \quad \mu\in\gL^+(G)\, ,$$
where $( f,\chi_\mu )$ is the inner product in $L^2(G)$.
The Fourier transform extends to an unitary isomorphism
$\cF : L^2(G)^G \to \ell^2(\gL^+(G))$ and
\[f=\sum_{\mu \in \gL^+(G)}\widehat{f}(\mu )\chi_\mu\]
in $L^2(G)^G$. If $f$ is smooth
the Fourier series converges absolutely and uniformly.

If not otherwise stated we will assume that $G$ does not contain
any simple factor of exceptional type.
As before $W(\fg,\fh)$ denotes the Weyl group of $\Delta (\fg_\C,\fh_\C)$,
$\widetilde{W}=\widetilde{W}(\fg,\fh)$
also denotes that Weyl group when $G$ is of type $A_n$, $B_n$ or $C_n$,
and $\widetilde{W}=\widetilde{W}(\fg,\fh)$ is the
Weyl group extended by including odd sign changes in the $D_n$ cases.
For $r>0$  let $\PW_r^\rho (\fh^*_\C)^{\widetilde{W}}$ denote the space
of holomorphic functions $\Phi$ on $\fh_\C^*$ such that
\begin{enumerate}
\item  For each $k\in\N$ there exists a constant $C_k>0$ such that
$$|\Phi(\lambda )|\leqq C_k(1+|\lambda |)^{-k} e^{r|\Re\lambda |}
\text{ for all } \lambda \in\fh_\C^*,
$$
\item $\Phi(w(\lambda +\rho)-\rho)=\det(w)\Phi(\lambda )$ for
all $w\in \widetilde{W}$, $\lambda \in\fh_\C^*$.
\end{enumerate}

Let $H=\exp (\fh)$.
For $0<r<R$ denote by $C_{r,\widetilde{W}}^\infty (G)^G$ the space of smooth central functions
with support in a closed geodesic ball $B_r(e)$ of radius $r$ such
that the restriction to $H$ is $\widetilde W$--invariant. We refer to a much more detailed
discussion in Section \ref{sec3}.
In this terminology the theorem of Gonzalez \cite{Gon} reads as follows.

\begin{theorem}\label{t: Gonzalez}
Let $G$ be an arbitrary connected simply connected compact
Lie group.
Let $0<r<R$ and let $f\in C^\infty(G)^G$ be given. Then $f$ belongs to
$C^\infty_{r,\widetilde{W}}(G)^G$  if and only if the
Fourier transform $\mu\mapsto \widehat{f}(\mu)$
extends to a holomorphic function $\Phi_f$
on $\fh^*_\C$ such that $\Phi_f\in \PW_r^\rho(\fh^*_\C)^{\widetilde{W}}$.
\end{theorem}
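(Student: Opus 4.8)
The plan is to recognize that this statement is a reformulation, in the paper's $\widetilde{W}$--notation, of Gonzalez's Paley--Wiener theorem \cite{Gon} (see also \cite{OS}), and to reduce it to the classical Euclidean Paley--Wiener theorem on $\fh$ by passing through the maximal torus $H=\exp(\fh)$ and the Weyl character formula. Since $f$ is central it is determined by its $W(\fg,\fh)$--invariant restriction $f|_H$, and the Weyl integration formula gives
\[
\widehat{f}(\mu)=(f,\chi_\mu)=\frac{1}{\#W(\fg,\fh)}\int_H f(h)\,\overline{\chi_\mu(h)}\,|\delta(h)|^2\,dh,
\]
where $\delta(\exp X)=\prod_{\alpha\in\Delta^+}(e^{\alpha(X)/2}-e^{-\alpha(X)/2})$ is the Weyl denominator. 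Writing the numerator of the character formula as $A_\nu(\exp X)=\sum_{w\in W(\fg,\fh)}\det(w)\,e^{(w\nu)(X)}$, so that $\chi_\mu\,\delta=A_{\mu+\rho}$ and $\delta=A_\rho$, one obtains
\[
\widehat{f}(\mu)=\frac{1}{\#W(\fg,\fh)}\int_H (f\delta)(h)\,\overline{A_{\mu+\rho}(h)}\,dh .
\]
Thus $\widehat{f}(\mu)$ is the Fourier coefficient of the skew function $f\delta$ at the regular dominant weight $\mu+\rho$; this is the origin of the $\rho$--shift in condition (2).

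For the forward direction, I would use that $0<r<R$ and that $R$ is the injectivity radius (Theorem \ref{inj-radius}): $\exp$ is a diffeomorphism of the ball $\{X\in\fh\mid \|X\|\le r\}$ onto its image, so I may lift $f\delta$ to a smooth, compactly supported, skew function $\psi$ on $\fh$ supported in that ball. Its Euclidean Fourier transform $G:=\cF_\fh(\psi)$ is then entire of exponential type $r$ and rapidly decreasing (here the weights are regarded as elements of $i\fh^*$, which accounts for the $|\Re\lambda|$ in the estimate), and $G(\mu+\rho)=\widehat{f}(\mu)$. Setting $\Phi_f(\lambda):=G(\lambda+\rho)$ gives a holomorphic extension of $\mu\mapsto\widehat{f}(\mu)$ satisfying the growth bound (1), while skew--invariance of $\psi$ forces $G(w\nu)=\det(w)G(\nu)$, i.e. $\Phi_f(w(\lambda+\rho)-\rho)=\det(w)\Phi_f(\lambda)$, which is condition (2). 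The hypothesis $f\in C^\infty_{r,\widetilde{W}}(G)^G$, that $f|_H$ is $\widetilde{W}$--invariant, upgrades $W(\fg,\fh)$--equivariance to $\widetilde{W}$--equivariance precisely in the type $D_n$ factors where $\widetilde{W}$ adjoins the odd sign changes; consistency holds because $\delta$ transforms by the orthogonal determinant, which is $-1$ on an odd sign change.

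The converse direction is where I expect the main obstacle. Given $\Phi\in\PW_r^\rho(\fh^*_\C)^{\widetilde{W}}$, set $G(\nu)=\Phi(\nu-\rho)$; then $G$ is entire of exponential type $r$ and skew, so by the Euclidean Paley--Wiener theorem its inverse transform $\psi$ is a smooth skew function on $\fh$ supported in the ball of radius $r$. The delicate point is to produce the central $f$ with $f|_H\circ\exp=\psi/\delta(\exp\,\cdot\,)$: one must invoke the Harish--Chandra divisibility/regularity fact that the quotient of a smooth skew function by the Weyl denominator is again smooth and $W$--invariant, with the required vanishing along the walls; and one must use $r<R$ to guarantee that the support descends from $\fh$ to $B_r(e)\subset G$ without wraparound on the torus. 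Checking $\widehat{f}=\Phi$ then reverses the computation of the first paragraph. For the type $D_n$ factors the extra odd sign change is realized as conjugation by an orthogonal element outside the connected group (passing from $\SO$ to $\mathrm{O}$, as in the type $D$ discussion of Section \ref{sec4}), so that $\widetilde{W}$--equivariance of $\Phi$ matches $\widetilde{W}$--invariance of $f|_H$ and hence $f\in C^\infty_{r,\widetilde{W}}(G)^G$. Since all analytic content already resides in \cite{Gon}, the verification is mainly the matching of this dictionary rather than a re-derivation of the Euclidean estimates.
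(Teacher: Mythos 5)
Your route is essentially the paper's route: the published proof consists of citing Gonzalez \cite{Gon} for everything except the $\widetilde{W}$--refinement and then checking that refinement for the type $D_n$ factors. Your first paragraphs merely unpack Gonzalez's argument (Weyl integration, $\chi_\mu\delta=A_{\mu+\rho}$, lifting the skew function $f\delta$ to $\fh$ inside the injectivity radius, the Euclidean Paley--Wiener theorem, and division by the Weyl denominator for the converse), and deferring the divisibility and descent issues back to \cite{Gon} is exactly what the paper does. That part is fine.

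The genuine problem sits in the one place where the theorem says more than \cite{Gon}, and it is a sign error that reverses the conclusion there. You assert that ``$\delta$ transforms by the orthogonal determinant, which is $-1$ on an odd sign change.'' This is false. The odd sign change $\sigma$ ($f_1\mapsto -f_1$, $f_i\mapsto f_i$ for $i\geqq 2$) is precisely the Dynkin diagram automorphism interchanging $\alpha_1=f_1+f_2$ and $\alpha_2=f_2-f_1$; it permutes $\Delta^+$, hence fixes $\rho$, and hence leaves $\delta$ (and likewise $\varpi=\prod_{\alpha\in\Delta^+}\langle\,\cdot\,,\alpha\rangle$ and $A_\rho$) invariant, even though $\det\sigma=-1$ as an orthogonal map. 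The rule $\delta\circ w=\det(w)\,\delta$ holds for $w\in W(\fg,\fh)$ only because $\det(w)=(-1)^{\ell(w)}$ counts the positive roots sent to negative ones, and for $\sigma$ that count is zero. Consequently, for $f$ with $\widetilde{W}$--invariant restriction to $H$, the lifted function $\psi=(f\delta)\circ\exp$ satisfies $\psi\circ\sigma=+\psi$, so the holomorphic extension obeys $\Phi_f(\sigma(\lambda+\rho)-\rho)=+\Phi_f(\lambda)$, not $-\Phi_f(\lambda)$. This plus sign is what is actually required: it is what the paper's own check via the Weyl character formula gives ($\chi_\mu\circ\widetilde{\sigma}=\chi_{\sigma'\mu}$ and $\sigma'\rho=\rho$ yield $\widehat f(\sigma'(\mu+\rho)-\rho)=\widehat f(\sigma'\mu)=\widehat f(\mu)$), and it is what makes the division by $\varpi$ in Lemma \ref{isoPWspaces} land in the genuinely $\widetilde{W}$--\emph{invariant} Paley--Wiener space. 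In other words, the symbol $\det$ in condition (2) of the definition of $\PW_r^\rho(\fh^*_\C)^{\widetilde{W}}$ must be read as the character that is the usual sign on $W(\fg,\fh)$ and trivial on the adjoined diagram automorphism; with your sign, $\Phi_f$ would come out skew rather than invariant under $\sigma$ and the asserted equivalence would fail. Correct the transformation law of $\delta$ under $\sigma$ and the rest of your argument goes through.
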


\begin{proof} We only have to check that $f\in C^\infty_{r,\widetilde{W}}(G)^G$
if and only if $\widehat{f}(w(\mu +\rho)-\rho )=
\widehat{f}(\mu )$. For factors not of type $D_n$ that follows from
Gonzalez's theorem. For factors of type $D_n$ it follows Weyl's character
formula.
\end{proof}
In \cite{OS} it is shown that the extension $\Phi_f$ is unique whenever
$r$ is sufficiently small. In that case Fourier transform,
followed by holomorphic extension, is a bijection
$C^\infty_{r,\widetilde{W}} (G)^G\cong \PW_r^\rho(\fh^*_\C )^{\widetilde{W}}$.

We will now extend these results to projective limits. We start with two
simple lemmas.

\begin{lemma}\label{le-PhZero}
Let $\Phi\in\PW_r^\rho(\fh^*_\C)^{\widetilde{W}}$.
Assume that $\lambda\in \fh^*_\C$ is
such that $\langle \lambda ,\alpha\rangle =0$ for some $\alpha\in\Delta$. Then
$\Phi (\lambda-\rho )=0$.
\end{lemma}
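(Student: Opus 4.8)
The plan is to exploit the $\rho$-shifted transformation law, condition (2) in the definition of $\PW_r^\rho(\fh^*_\C)^{\widetilde{W}}$, applied to the single reflection $s_\alpha$. Since $s_\alpha$ lies in the Weyl group $W(\fg,\fh)$ and hence in $\widetilde{W}$, and since $\det(s_\alpha)=-1$, the point is that the functional equation should force $\Phi(\lambda-\rho)$ to equal its own negative exactly on the hyperplane $\langle\lambda,\alpha\rangle=0$.

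Concretely, first I would record condition (2) with a dummy argument $\nu$: for every $w\in\widetilde{W}$ and $\nu\in\fh_\C^*$ one has $\Phi(w(\nu+\rho)-\rho)=\det(w)\,\Phi(\nu)$. Then I would specialize $w=s_\alpha$ and $\nu=\lambda-\rho$, so that $\nu+\rho=\lambda$ and the identity becomes
$$\Phi\bigl(s_\alpha\lambda-\rho\bigr)=\det(s_\alpha)\,\Phi(\lambda-\rho)=-\,\Phi(\lambda-\rho).$$
The next step is the only computation: since $s_\alpha\lambda=\lambda-\tfrac{2\langle\lambda,\alpha\rangle}{\langle\alpha,\alpha\rangle}\alpha$ and $\langle\lambda,\alpha\rangle=0$ by hypothesis, we get $s_\alpha\lambda=\lambda$, so the left-hand side of the display is exactly $\Phi(\lambda-\rho)$. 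Combining the two sides yields $\Phi(\lambda-\rho)=-\Phi(\lambda-\rho)$, hence $2\Phi(\lambda-\rho)=0$ and therefore $\Phi(\lambda-\rho)=0$.

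There is essentially no obstacle here; the conclusion is a one-line consequence of the anti-invariance. The only point worth stating with care is that the $\rho$-shift in condition (2) is precisely what converts the fixed-point relation $s_\alpha\lambda=\lambda$ (equivalently $\langle\lambda,\alpha\rangle=0$) into a vanishing statement at the shifted point $\lambda-\rho$; for this one must apply the transformation law at $\nu=\lambda-\rho$ rather than at $\lambda$ itself, and note that $s_\alpha\in\widetilde{W}$ holds because $\widetilde{W}$ contains the full Weyl group generated by the reflections $s_\beta$, $\beta\in\Delta$.
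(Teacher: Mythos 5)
Your proof is correct and is essentially identical to the paper's: both apply the $\rho$-shifted anti-invariance condition (2) to the reflection $s_\alpha$ at the point $\nu=\lambda-\rho$, use $s_\alpha\lambda=\lambda$ when $\langle\lambda,\alpha\rangle=0$, and conclude from $\det(s_\alpha)=-1$ that $\Phi(\lambda-\rho)=-\Phi(\lambda-\rho)=0$. No differences worth noting.
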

\begin{proof} Let $s_\alpha $ be the reflection in the hyper plane
perpendicular to $\alpha$. Then
\begin{eqnarray*}
\Phi (\lambda -\rho )&=&\Phi (s_\alpha (\lambda )-\rho )\\
&=& \Phi (s_\alpha (\lambda -\rho +\rho )-\rho) = \det (s_\alpha )\Phi (\lambda -\rho)\, .
\end{eqnarray*}
The claim now follows as $\det (s_\alpha )=-1$.
\end{proof}

\begin{lemma}\label{isoPWspaces}
Let $r>0$ and let $\widetilde{W}$ be as before.
For $\Phi \in \PW_r^\rho (\fh^*_\C)^{\widetilde{W}}$ define
\[
T(\Phi ) (\lambda )=F_\Phi (\lambda ):=
\tfrac{\varpi (\rho) }{\varpi (\lambda )}\Phi (\lambda -\rho )
\text{ where } \varpi (\lambda )=\prod_{\alpha\in\Delta^+}
    \langle\lambda ,\alpha \rangle\, .
\]
Then $T (\Phi )\in \PW_r (\fh^*_\C)^{\widetilde{W}}$ and the map
$\PW_r^\rho (\fh^*_\C)^{\widetilde{W}}\to \PW_r(\fh^*_\C)^{\widetilde{W}}$,
$\Phi \mapsto F_\Phi$, is a linear isomorphism.
\end{lemma}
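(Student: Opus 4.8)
\emph{Overview.} The plan is to check successively that $F_\Phi$ extends to an entire function on $\fh^*_\C$, that it satisfies the Paley--Wiener estimate of exponential type $r$, and that it is $\widetilde W$-invariant, and then to write down an explicit inverse. The only genuinely analytic point is the forward map $T$, which divides by the polynomial $\varpi$; the inverse map merely multiplies by a polynomial and is harmless.

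\emph{Entirety.} First I would note that $\varpi=\prod_{\alpha\in\Delta^+}\langle\cdot,\alpha\rangle$ is a product of pairwise non-proportional linear forms, since $\Delta$ is reduced and distinct positive roots cut out distinct hyperplanes. By Lemma \ref{le-PhZero} the entire function $\lambda\mapsto\Phi(\lambda-\rho)$ vanishes on each hyperplane $\{\langle\lambda,\alpha\rangle=0\}$, $\alpha\in\Delta^+$. A holomorphic function vanishing on the smooth hypersurface $\{\langle\cdot,\alpha\rangle=0\}$ is divisible by the linear form $\langle\cdot,\alpha\rangle$, and as these forms are mutually coprime, $\Phi(\cdot-\rho)$ is divisible by their product $\varpi$. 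Hence $F_\Phi=\varpi(\rho)\,\Phi(\cdot-\rho)/\varpi$ is entire.

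\emph{Growth.} The heart of the argument is a division lemma: if $g$ is entire, $p$ is a polynomial with $g/p$ entire, and $g$ has exponential type $r$ with rapid decay, then so does $g/p$. I would apply it to $g=\varpi(\rho)\,\Phi(\cdot-\rho)$ and $p=\varpi$; here $g$ lies in the relevant Paley--Wiener class because translating $\Phi$ by the fixed vector $\rho$ alters the estimate only by a bounded factor. One proves the lemma one linear factor at a time. After an orthogonal rotation making a factor $\langle\cdot,\alpha\rangle$ into a coordinate $z_1$, the bound $|(g/p)(z)|\le 2|g(z)|$ is immediate for $|z_1|\ge\tfrac12$, while for $|z_1|<\tfrac12$ the Cauchy formula
\[
(g/p)(z_1,z')=\frac{1}{2\pi i}\oint_{|\zeta|=1}\frac{(g/p)(\zeta,z')}{\zeta-z_1}\,d\zeta
\]
gives $|(g/p)(z_1,z')|\le 2\max_{|\zeta|=1}|g(\zeta,z')|$, since $|(g/p)(\zeta,z')|=|g(\zeta,z')|$ on the circle. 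There $|\Im(\zeta,z')|$ and $1+|(\zeta,z')|$ differ from their values at $z$ only by bounded amounts, so exponential type $r$ and rapid decay pass to $g/p$. Iterating over the $|\Delta^+|$ factors of $\varpi$ yields the estimate for $F_\Phi$. I expect this division lemma to be the main obstacle, although it is a standard device in Paley--Wiener theory.

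\emph{Invariance and inverse.} For $w\in\widetilde W$ the Weyl denominator transforms by $\varpi(w\lambda)=\det(w)\varpi(\lambda)$: this is classical for $W(\fg,\fh)$, and for the extra odd sign changes in the type-$D$ case $\varpi$ depends only on the squares of the coordinates, hence is invariant, matching the value $\det(w)=1$ assigned to those elements in condition (2). Combining this with condition (2) in the form $\Phi(w\lambda-\rho)=\det(w)\Phi(\lambda-\rho)$, the two sign factors cancel:
\[
F_\Phi(w\lambda)=\frac{\varpi(\rho)}{\varpi(w\lambda)}\,\Phi(w\lambda-\rho)=\frac{\varpi(\rho)}{\det(w)\,\varpi(\lambda)}\,\det(w)\,\Phi(\lambda-\rho)=F_\Phi(\lambda),
\]
so $F_\Phi\in\PW_r(\fh^*_\C)^{\widetilde W}$. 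Linearity and injectivity of $T$ are clear, the latter because $F_\Phi=0$ forces $\Phi(\cdot-\rho)=0$ and hence $\Phi=0$. Finally I would verify that $F\mapsto\varpi(\cdot+\rho)\,\varpi(\rho)^{-1}F(\cdot+\rho)$ carries $\PW_r(\fh^*_\C)^{\widetilde W}$ into $\PW_r^\rho(\fh^*_\C)^{\widetilde W}$ — now only a polynomial multiplication, with condition (2) checked via the same transformation rule for $\varpi$ together with the $\widetilde W$-invariance of $F$ — and that it is a two-sided inverse of $T$. This gives the asserted linear isomorphism.
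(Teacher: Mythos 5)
Your proof is correct and follows essentially the same route as the paper: holomorphy of $F_\Phi$ via Lemma \ref{le-PhZero} one linear factor at a time, the growth estimate via a division lemma, $\widetilde W$--invariance from the transformation law of $\varpi$, and surjectivity from the explicit inverse $F\mapsto \varpi(\cdot+\rho)F(\cdot+\rho)$ up to the constant $\varpi(\rho)$. The only differences are that where the paper simply cites \cite[Ch.\ III, Lemma 5.13]{He1994} for the preservation of exponential type and rapid decay under division you supply the standard one--variable Cauchy--integral proof of that lemma, and that you make explicit (correctly) that the sign factor in condition (2) must be taken equal to $1$ on the odd sign changes in type $D$ so that it matches the invariance of $\varpi$ there --- a convention the paper leaves implicit.
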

\begin{proof} Let $\alpha \in\Delta^+$. Then
\[\lambda \mapsto \frac{1}{(\lambda ,\alpha)} \Phi (\lambda )\]
is holomorphic by Lemma \ref{le-PhZero}. According to
\cite{He1994}, Lemma 5. 13, p. 288, it follows that
this function is also of exponential type $r$. Iterating this for each root it follows that
$F_\Phi$ is holomorphic of exponential type $r$. As $\varpi (w(\lambda ))=\det (w)\varpi (\lambda )$
it follows using the same arguments as in the proof of Lemma \ref{le-PhZero} that
$F_\Phi$ is $\widetilde W$--invariant. The surjectivity follow as
$F\mapsto \varpi (\lambda )F(\cdot +\rho)$ maps $\PW_r(\fh^*_\C)^{\widetilde{W}}$
into $\PW_r^\rho
(\fh^*_\C)^{\widetilde{W}}$.
\end{proof}
\begin{theorem}\label{th-PknSurjective} Let $r>0$ and assume that
$G_k$ propagates $G_n$. Then the map
\[\Phi \mapsto P_{k,n}(\Phi):=T_n^{-1}( T_k(\Phi)|_{\fh_{n,\C}^*})=
\frac{\varpi_n(\bullet )}{\varpi_n(\rho_n)} \left(\frac{\varpi_k (\rho_k)}{\varpi_k (\bullet )}
\Phi (\bullet -\rho_k)|_{\fh_{n,\C}^*}
\right)(\bullet +\rho_n)  \]
from $\PW_r^{\rho_k}(\fh_{k,\C}^*)^{\widetilde{W}_k}\to
\PW_r^{\rho_n}(\fh_{n,\C}^*)^{\widetilde{W}_n}$ is surjective.
\end{theorem}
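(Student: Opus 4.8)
The plan is to recognize $P_{k,n}$ as a conjugate of the ordinary function-restriction map by the isomorphisms of Lemma \ref{isoPWspaces}, and then to invoke Theorem \ref{th-IsoPW1}. First I would record the factorization
\[
P_{k,n} = T_n^{-1}\circ \res_{k,n}\circ T_k,
\qquad \res_{k,n}(F) := F|_{\fh_{n,\C}^*},
\]
which is exactly the definition given in the statement. By Lemma \ref{isoPWspaces} both $T_k\colon \PW_r^{\rho_k}(\fh_{k,\C}^*)^{\widetilde{W}_k}\to \PW_r(\fh_{k,\C}^*)^{\widetilde{W}_k}$ and $T_n\colon \PW_r^{\rho_n}(\fh_{n,\C}^*)^{\widetilde{W}_n}\to \PW_r(\fh_{n,\C}^*)^{\widetilde{W}_n}$ are linear isomorphisms. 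Hence $P_{k,n}$ is surjective if and only if the restriction map
\[
\res_{k,n}\colon \PW_r(\fh_{k,\C}^*)^{\widetilde{W}_k}\longrightarrow \PW_r(\fh_{n,\C}^*)^{\widetilde{W}_n}
\]
is surjective; the well-definedness of $T_n^{-1}(T_k(\Phi)|_{\fh_{n,\C}^*})$ is itself the statement that $\res_{k,n}$ lands in the $\widetilde{W}_n$--invariants, which I would verify alongside hypothesis (1) below.

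To prove surjectivity of $\res_{k,n}$ I would apply Theorem \ref{th-IsoPW1} with $E=\fh_{n,\R}^*$, $F=\fh_{k,\R}^*$ (using the inner product to identify each space with its dual and to transport the Weyl-group actions), $W(E)=\widetilde{W}_n$, and $W(F)=\widetilde{W}_k$. Its two hypotheses are supplied by Theorem \ref{th-AdmExt}. For hypothesis (1), $\widetilde{W}_{\fh_n}(\fg_k,\fh_k)|_{\fh_n}=\widetilde{W}_n$: when $\fg_k\neq\so(2n,\C)$ the extensions $\widetilde{W}$ agree with the ordinary Weyl groups and part (1) of Theorem \ref{th-AdmExt} applies verbatim, while in the type $D$ case part (2) shows that $W_{\fh_n}(\fg_k,\fh_k)|_{\fh_n}=\gS_n\rtimes\{1,-1\}^n$ already realizes all sign changes, which is precisely $\widetilde{W}_n$. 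For hypothesis (2), surjectivity of $\rI_{\widetilde{W}_k}(\fh_{k,\R})\to \rI_{\widetilde{W}_n}(\fh_{n,\R})$: outside type $D$ this is the invariant-restriction assertion of Theorem \ref{th-AdmExt}(1), and in type $D$ the $\widetilde{W}$--invariants coincide with the even invariants $\rI^{\text{even}}$ in the $x_j^2$, for which Theorem \ref{th-AdmExt}(2) gives exactly the required surjectivity.

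With both hypotheses in hand, Theorem \ref{th-IsoPW1} yields surjectivity of $\res_{k,n}$, and composing with the isomorphisms $T_k$ and $T_n^{-1}$ completes the proof. The step I expect to be the genuine obstacle, and would treat most carefully, is the type $D$ case: for the \emph{ordinary} Weyl group the Pfaffian obstructs surjectivity of the invariant restriction, as Theorem \ref{th-AdmExt}(2) records, so it is essential that $W$ has been replaced throughout by its $\Z_2$--extension $\widetilde{W}$. This replacement is precisely what simultaneously upgrades hypothesis (1) to hold with all sign changes and converts hypothesis (2) into the surjective restriction of even invariants; everything else is the formal bookkeeping of conjugating the restriction map by $T_k$ and $T_n$.
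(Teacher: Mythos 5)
Your proposal is correct and follows exactly the route the paper takes: the paper's proof is the one-line citation of Lemma \ref{isoPWspaces} and Theorem \ref{th-IsoPW1}, and your argument simply spells out the conjugation of the restriction map by $T_k$ and $T_n^{-1}$ together with the verification of hypotheses (1) and (2) via Theorem \ref{th-AdmExt}, including the correct handling of the type $D$ case through the $\Z_2$--extension $\widetilde{W}$ and the even invariants. Nothing is missing.
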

\begin{proof} This follows from Lemma \ref{isoPWspaces} and Theorem \ref{th-IsoPW1}.
\end{proof}

Recall from Theorem \ref{inj-radius} that the injectivity radii
$R$ are the same for $G_k$ and $G_n$.  For $0<r<R$
we now define a map $C_{k,n} :C^\infty_{r,\widetilde{W}_k}
(G_k)^{G_k}\to C^\infty_{r,\widetilde{W}_n}(G_n)^{G_n}$ by
the commutative diagram using Gonzalez' theorem:
$$\xymatrix{C^\infty_{r,\widetilde{W}_k}(G_k)^{G_k} \ar[d]_{\cF_k}\ar[r]^{C_{k,n}}&
C^\infty_{r,\widetilde{W}_n}(G_n)^{G_n}\ar[d]^{\cF_n}
 \\
\PW_r^{\rho_k}(\fh_{k,\C})^{\widetilde W_k}\ar[r]_{P_{k,n}} & \PW_r^{\rho_n}
(\fh_{n,\C})^{\widetilde W_n} \\
}\, .
$$
\begin{theorem}\label{th-CknSurjective} If $G_k$ propagates $G_n$ and $0<r<R$
then $$C_{k,n} : C^\infty_{r,\widetilde W_k}(G_k)^{G_k} \to
C^\infty_{r,\widetilde W_n}(G_n)^{G_n}$$ is surjective.
\end{theorem}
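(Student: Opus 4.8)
The plan is to read off the surjectivity of $C_{k,n}$ from the commutative diagram that defines it, by exhibiting $C_{k,n}$ as a composition of surjective maps. The essential point is that, for $0<r<R$, the two vertical arrows $\cF_k$ and $\cF_n$ are linear \emph{isomorphisms} onto the spaces $\PW_r^{\rho_k}(\fh_{k,\C}^*)^{\widetilde{W}_k}$ and $\PW_r^{\rho_n}(\fh_{n,\C}^*)^{\widetilde{W}_n}$, so that the only genuinely new ingredient is the surjectivity of the horizontal map on Paley--Wiener spaces, which is already available.

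First I would verify that $\cF_n$ (and, identically, $\cF_k$) is a bijection. Surjectivity of $\widehat{f}\mapsto \Phi_f$ onto $\PW_r^{\rho_n}$ is precisely the content of Theorem \ref{t: Gonzalez}, while injectivity follows because the characters $\{\chi_\mu\}_{\mu\in\gL^+(G_n)}$ form a complete orthonormal system in $L^2(G_n)^{G_n}$, so a central function is determined by its Fourier coefficients $\widehat{f}(\mu)$. The place where the hypothesis $r<R$ is used is the uniqueness of the holomorphic extension $\Phi_f$ of $\widehat{f}$ from the weight lattice to all of $\fh_\C^*$: this is the result of \cite{OS}, where the injectivity radius $R$ is exactly the sampling threshold below which a $\widetilde{W}$--equivariant function of exponential type $r$ is determined by its values on the (translated) weight lattice. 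It is crucial here that, by Theorem \ref{inj-radius}, $R$ is the same for $G_k$ and $G_n$, so that the single condition $0<r<R$ makes \emph{both} vertical arrows isomorphisms simultaneously and the diagram is meaningful.

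Granting that $\cF_k$ and $\cF_n$ are invertible, the defining diagram yields
\[
C_{k,n}=\cF_n^{-1}\circ P_{k,n}\circ \cF_k\, .
\]
By Theorem \ref{th-PknSurjective}, the middle map $P_{k,n}\colon \PW_r^{\rho_k}(\fh_{k,\C}^*)^{\widetilde{W}_k}\to \PW_r^{\rho_n}(\fh_{n,\C}^*)^{\widetilde{W}_n}$ is surjective. Concretely, given $g\in C^\infty_{r,\widetilde{W}_n}(G_n)^{G_n}$, I would take $\cF_n(g)\in \PW_r^{\rho_n}$, choose (by Theorem \ref{th-PknSurjective}) a preimage $\Psi\in \PW_r^{\rho_k}$ with $P_{k,n}(\Psi)=\cF_n(g)$, and set $f=\cF_k^{-1}(\Psi)$; then $C_{k,n}(f)=\cF_n^{-1}(P_{k,n}(\Psi))=g$. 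As a composition of surjections, $C_{k,n}$ is surjective.

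The step I expect to be the main obstacle is not the formal composition argument but the justification that $0<r<R$ really forces $\cF_k$ and $\cF_n$ to be bijections, and in particular that the holomorphic extension is unique on the whole range $0<r<R$ rather than only for $r$ small. This hinges on identifying the injectivity radius $R$ with the sampling threshold of \cite{OS}, together with Theorem \ref{inj-radius} ensuring that this threshold stays constant along the propagation $G_n\subset G_k$; everything else is bookkeeping with the diagram.
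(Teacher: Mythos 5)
Your proof is essentially identical to the paper's, which simply cites Theorem \ref{t: Gonzalez} and Theorem \ref{th-PknSurjective}: $C_{k,n}$ is by definition $\cF_n^{-1}\circ P_{k,n}\circ \cF_k$ on the commutative diagram, and surjectivity is read off from the surjectivity of the factors exactly as you do. The uniqueness-of-extension issue you flag is a genuine subtlety, but it belongs to the paper's own setup (which records uniqueness of $\Phi_f$ only for $r$ sufficiently small) rather than being a gap in your argument relative to the paper's.
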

\begin{proof} This follows from Theorem \ref{t: Gonzalez} and
Theorem \ref{th-PknSurjective}.
\end{proof}

\begin{theorem} Let $r>0$ and assume that $G_k$ propagates $G_n$. Then
the sequences $(\PW_r^{\rho_n} (\fh_{n,\C}^*)^{\widetilde W_n},
P_{k,n})$ and $(C^\infty_{r,\widetilde W_n}(G_n)^{G_n}, C_{k,n})$
form projective systems and

\begin{eqnarray*}
\PW_{r}^{\rho_\infty}(\fh_{\infty,\C}) &:=&\varprojlim \, \PW_r^{\rho_n} (\fh_{n,\C}^*)^{\widetilde W_n}\\
C^\infty_{r,\infty}(G_\infty )^{G_\infty}&:=&\varprojlim \, C_{r,\widetilde W_n} ^\infty (G_n)^{G_n}
\end{eqnarray*}
are nonzero.
\end{theorem}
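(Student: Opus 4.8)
The plan is to follow verbatim the pattern of Theorem~\ref{th-projectiveLimitPW}: once the connecting maps are shown to compose correctly and to be surjective, producing a nonzero coherent sequence is a routine recursion. So the proof splits into two tasks, verifying the projective-system property and then exhibiting a nonzero element of each limit, and essentially all of the analytic content has already been absorbed into Theorems~\ref{th-PknSurjective} and~\ref{th-CknSurjective}.

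First I would check transitivity of the $P_{k,n}$. For $m\leqq n\leqq k$ we have $\fh_{m,\C}^*\subseteqq \fh_{n,\C}^*\subseteqq \fh_{k,\C}^*$, and since $P_{k,n}(\Phi)=T_n^{-1}\bigl(T_k(\Phi)|_{\fh_{n,\C}^*}\bigr)$ with $T_n$ the isomorphism of Lemma~\ref{isoPWspaces}, the conjugating maps telescope: $P_{n,m}\circ P_{k,n}(\Phi)=T_m^{-1}\bigl((T_k(\Phi)|_{\fh_{n,\C}^*})|_{\fh_{m,\C}^*}\bigr)=T_m^{-1}\bigl(T_k(\Phi)|_{\fh_{m,\C}^*}\bigr)=P_{k,m}(\Phi)$, using only that restriction of holomorphic functions along the tower is transitive. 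Hence $(\PW_r^{\rho_n}(\fh_{n,\C}^*)^{\widetilde{W}_n},P_{k,n})$ is a projective system. For the $C_{k,n}$ I would invoke the defining relation $C_{k,n}=\cF_n^{-1}\circ P_{k,n}\circ\cF_k$ from the commutative square built via Theorem~\ref{t: Gonzalez}: conjugating by the Fourier isomorphisms turns the identity $P_{n,m}\circ P_{k,n}=P_{k,m}$ into $C_{n,m}\circ C_{k,n}=C_{k,m}$, so $(C^\infty_{r,\widetilde{W}_n}(G_n)^{G_n},C_{k,n})$ is a projective system as well.

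It remains to show each limit is nonzero. Fix a base index $n_0$. The space $\PW_r^{\rho_{n_0}}(\fh_{n_0,\C}^*)^{\widetilde{W}_{n_0}}$ is nonzero, since by Theorem~\ref{t: Gonzalez} it is the Fourier image of the nonzero space $C^\infty_{r,\widetilde{W}_{n_0}}(G_{n_0})^{G_{n_0}}$; choose $0\neq\Phi_{n_0}$ in it. Using that each $P_{k+1,k}$ is surjective (Theorem~\ref{th-PknSurjective}), I would recursively pick $\Phi_{k+1}$ with $P_{k+1,k}(\Phi_{k+1})=\Phi_k$ for $k\geqq n_0$, and set $\Phi_j=P_{n_0,j}(\Phi_{n_0})$ for $j<n_0$. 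By the transitivity established above, $P_{k,j}(\Phi_k)=\Phi_j$ for all $j\leqq k$, so $(\Phi_j)_j$ is a coherent sequence, and it is nonzero because $\Phi_{n_0}\neq 0$. Thus $\varprojlim\PW_r^{\rho_n}(\fh_{n,\C}^*)^{\widetilde{W}_n}\neq\{0\}$. The identical recursion using the surjectivity of the $C_{k,n}$ (Theorem~\ref{th-CknSurjective}) produces a nonzero element of $\varprojlim C^\infty_{r,\widetilde{W}_n}(G_n)^{G_n}$; alternatively one transports the element just constructed across the Fourier isomorphisms.

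I expect no serious obstacle here: the statement is formal once Theorems~\ref{th-PknSurjective} and~\ref{th-CknSurjective} are in hand. The only point that genuinely demands care is the telescoping verification of transitivity, namely that the conjugating isomorphisms $T_n$ really cancel, so that the $P_{k,n}$ (and hence the $C_{k,n}$) assemble into a genuine projective system rather than merely a family of maps that is compatible only up to the $T_n$. Everything else is the same recursive lifting argument already used in Theorem~\ref{th-projectiveLimitPW}.
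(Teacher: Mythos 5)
Your proposal is correct and follows essentially the same route as the paper, whose proof consists of the single line that the result follows from Theorems \ref{th-PknSurjective} and \ref{th-CknSurjective}; you have simply written out the routine details (the telescoping of the conjugating isomorphisms $T_n$ and the recursive lifting already used in Theorem \ref{th-projectiveLimitPW}) that the paper leaves implicit.
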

\begin{proof} This follows from Theorem \ref{th-PknSurjective} and
Theorem \ref{th-CknSurjective}.
\end{proof}
\begin{remark}\label{inf-holo}{\rm
We can view elements $\Phi \in \PW_{r}^{\rho_\infty}(\fh_{\infty,\C})$ as
holomorphic functions on $\fh_{\infty,\C}$ when we view $\fh_{\infty,\C}$ as
the spectrum of $\varprojlim  \PW_r^{\rho_n} (\fh_{n,\C}^*)$.}
\hfill $\diamondsuit$
\end{remark}

\section{Spherical Representations of Compact Groups}\label{sec6}
\setcounter{equation}{0}

\noindent
We will now apply the results from Section \ref{sec1} and Section \ref{sec2}
to the Fourier transform on compact symmetric spaces. We start by an
overview over spherical representations, spherical functions and
the spherical Fourier transform. Most of the material can be found in
\cite{W2008a} and \cite{W2008b} but
partially with different proofs. The notation will be as in Section \ref{sec3}
and $G$ or $G_n$ will always stand for a compact group.
In particular,
$M_n=G_n/K_n$ where $G_n$ is a connected compact semisimple Lie group
with Lie algebra $\fg_n$, which we will for simplicity
assume is simply connected. The result can easily be
formulated for arbitrary compact symmetric spaces by following the
arguments in \cite{OS}. We will assume that $M_k$ propagates $M_n$.
We denote by $r_k$ respectively $r_n$ the real rank
of $M_k$ respectively $M_n$. As always we fix compatible $K_k$-- and
$K_n$--invariant inner products on $\fs_k$ respectively $\fs_n$.

As in Section \ref{sec3}
let $\Sigma_n = \Sigma_n(\mfg_n , \ga_n)$ denote the
system of restricted roots of $\ga_{n,\C}$ in $\mfg_{n,\C}$.
Let $\fh_n$ be a $\theta_n$-stable Cartan subalgebra such
that $\fh_n\cap \fs_n =\fa_n$. Let
$\Delta_n=\Delta (\fg_{n,\C}, \fh_{n,\C})$. Recall
that $\Sigma_n\subset i\ga_n^*$.
We choose positive subsystems
$\Delta_n^+$ and $\Sigma^+_n $ so that
$\Sigma_n^+\subseteqq \Delta_n^+|_{\fa_n}$,
$\Delta_n^+\subseteqq \Delta_k^+|_{\fh_{n,\C}}$,  and
$\Sigma^+_n \subset \Sigma^+_k|_{\ga_n}$.
Consider the reduced root system
$$
\Sigma_{2,n}=\{\alpha\in\Sigma_n\mid 2\alpha\not\in\Sigma_n\}
$$
and its positive subsystem $\Sigma_{2,n}^+ := \Sigma_{2,n} \cap \Sigma^+_n$.
Let
$$
\Psi_{2,n} = \Psi_{2,n}(\mfg_n , \ga_n) = \{\alpha_{n,1}, \dots , \alpha_{n,r_n}\}
$$
denote the set of simple roots for $\Sigma_{2,n}^+$.
We note the following simple facts; they follow from
the explicit realization (\ref{rootorder}) of the root systems discussed
in Section \ref{sec2}.
\begin{lemma} \label{X} Suppose that the $M_n$ are irreducible.
Let $r_n=\dim \fa_n$, the rank of $M_n$.
Number the simple root systems $\Psi_{2,n}$ as in $(\ref{rootorder})$.
Suppose that $M_k$ propagates $M_n$.
If $j\leqq r_n$ then $\alpha_{k,j}$ is the unique element of
$\Psi_{2,k}$ whose restriction to $\fa_n$ is $\alpha_{n,j}$.
\end{lemma}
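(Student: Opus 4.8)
The plan is to deduce the statement directly from the explicit coordinate models of the classical restricted root systems built in Section \ref{sec2}, exactly in the spirit of the remark preceding Lemma \ref{le-RestrictionOfSimpleRoots}. Since $M_n$ is irreducible and $M_k$ propagates $M_n$, and since adjoining simple roots at the left end of a connected Dynkin diagram keeps it connected, $M_k$ is again irreducible and $\Sigma_{2,k}$ is a classical root system of the \emph{same} type ($A$, $B$, $C$ or $D$) as $\Sigma_{2,n}$, of possibly larger rank $r_k\geqq r_n$. Thus the whole question reduces to the restriction-of-simple-roots computation of Section \ref{sec2}, carried out with $\fh$ replaced by $\fa$ and $\Delta$ replaced by $\Sigma_2$.

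First I would fix compatible coordinates. Realizing $\fa_j\cong\R^{r_j}$ by a basis $f_1,\dots,f_{r_j}$ numbered from the right as in Section \ref{sec2}, the definition of propagation lets us take the inclusion $\fa_n\hookrightarrow\fa_k$ to be the one that adjoins zeros on the left, so that the coordinate functionals satisfy $f_i|_{\fa_n}=f_i$ for $1\leqq i\leqq r_n$ and $f_i|_{\fa_n}=0$ for $r_n<i\leqq r_k$. This is the same normalization used in (\ref{eq-hnInhk}) and in the matrix realizations (\ref{an})--(\ref{dn}).

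Next I would read off both existence and uniqueness from the explicit list of simple roots. In every classical type the simple roots of $\Sigma_{2,j}$, apart from the one (two, in type $D$) at the right end, are successive coordinate differences, while the right-end root is type-dependent ($f_1$ in type $B$, $2f_1$ in type $C$, $f_1+f_2$ in type $D$, a coordinate difference in type $A$); propagation merely appends further coordinate differences on the left. Because the type, and hence the coordinate expression of each $\alpha_{\cdot,j}$ with $j\leqq r_n$, is preserved along the propagation, the coordinate restriction above gives $\alpha_{k,j}|_{\fa_n}=\alpha_{n,j}$ for all $1\leqq j\leqq r_n$. For uniqueness I would examine the adjoined roots $\alpha_{k,i}$ with $r_n<i\leqq r_k$: the first adjoined root restricts to minus a single coordinate functional $f_{r_n}$, which is not a simple root of $\Sigma_{2,n}$ (it is either a negative root or not a root at all), and every later adjoined root restricts to $0$. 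Hence none of the adjoined simple roots restricts into $\Psi_{2,n}$; since the rightmost $r_n$ simple roots already restrict bijectively onto $\Psi_{2,n}$, the root $\alpha_{k,j}$ is the unique element of $\Psi_{2,k}$ restricting to $\alpha_{n,j}$.

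The argument is essentially bookkeeping, and the one place I would treat as the genuine obstacle is the three multiplicity-two cases $AIII$, $CII$, $DIII$, where $\Sigma_{1/2}$ and $\Sigma_2$ genuinely differ. There I would invoke the observation from Section \ref{sec3} that passing from $\Sigma_{1/2}$ to $\Sigma_2$ alters \emph{only} the right-end simple root, so the left-end adjunction defining propagation is untouched and the coordinate form of the right-end root $\alpha_{\cdot,1}$ is still preserved along the propagation; this is exactly what the existence half of the argument above relies on, and it is what guarantees that working with the reduced system $\Sigma_2$ rather than $\Sigma_{1/2}$ changes nothing in the conclusion.
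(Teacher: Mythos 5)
Your proof is correct and takes exactly the route the paper intends: the paper gives no argument for Lemma \ref{X} beyond the remark that it ``follows from the explicit realization (\ref{rootorder}) of the root systems discussed in Section \ref{sec2},'' and your coordinate-by-coordinate verification of existence and uniqueness (the adjoined left-end roots restricting to $0$ or to a non-simple functional), together with the observation that $\Sigma_2$ and $\Sigma_{1/2}$ differ only in the right-end simple root, is precisely that verification written out.
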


Since $M_k$ propagates $M_n$ each irreducible factor of $M_k$ contains
at most  one simple factor of $M_n$.  In particular if $M_n$ is not irreducible
then $M_k$ is not irreducible, but we still can number the simple
roots so that Lemma \ref{X} applies.

We denote the
positive Weyl chamber in $\fa_n$ by $\fa_n^+$ and similarly for
$\fa_k$.

For $\mu \in \gL^+(G_n)$ let
$$
V_{\mu}^{K_n}=\{v\in V_{\mu} \mid
\pi_{\mu}(k)v = v \text{ for all } k\in K_n\}.
$$
We identify $i\fa_n^*$ with $\{\mu \in i\fh_n^*\mid \mu|_{\fh_n\cap \fk_n}=0\}$ and
similar for $\fa_n^*$ and $\fa_{n,\C}^*$.
With this identification in mind set
$$
\Lambda^+(G_n,K_n)
= \left \{\mu\in i\fa_n^* \left |
\tfrac{ (\mu ,\alpha ) }{ ( \alpha ,\alpha )}\in \Z^+ \text{ for all }
\alpha \in \Sigma^+ \right . \right \}.
$$

Since $G_n$ is connected and $M_n$ is simply connected it follows that
$K_n$ is connected. As $K_n$ is compact there exists a unique (up to
multiplication by a positive scalar) $G_n$--invariant measure
$\mu_{M_n}$ on $M_n$.  For brevity we sometimes write $dx$ instead of
$d\mu_{M_n}$. If $G_n$ is compact, in other words if $M_n$ is compact,
then we normalize $\mu_{M_n}$ so that
$\mu_{M_n}(M_n)=1$, i.e., $\mu_{M_n}$ is a probability measure on $M_n$.

\begin{theorem}[Cartan-Helgason]\label{t-CH} Assume that $G_n$ is
compact and simply connected. Then the following are equivalent.
\begin{enumerate}
\item $\mu \in \Lambda^+(G_n,K_n)$,
\item $\displaystyle{V_{\mu}^{K_n} \ne 0}$,
\item $ \displaystyle{\pi_{\mu}}$ is a subrepresentation of the
representation of  $G_n$  on  $L^2(M_n)$.
\end{enumerate}
When those conditions hold, $\dim V_{\mu}^{K_n} = 1$ and
$\pi_{\mu}$ occurs with multiplicity $1$ in the representation of
$G_n$ on $L^2(G_n/K_n)$.
\end{theorem}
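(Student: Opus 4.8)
\emph{The plan} is to separate the two formal equivalences, which are immediate from Peter--Weyl theory, from the arithmetic core $(1)\Leftrightarrow(2)$. Throughout I abbreviate $G,K,M,\Sigma^+$ for $G_n,K_n,M_n,\Sigma_n^+$, and write $\fb=\fh\cap\fk$ and $\fm=\fz_{\fk}(\fa)$ with $M=Z_K(\fa)$. First I would dispose of $(2)\Leftrightarrow(3)$ together with the multiplicity count: since $L^2(M)=L^2(G/K)=\Ind_K^G\mathbf{1}$, Frobenius reciprocity gives $\Hom_G(V_\mu,L^2(G/K))\cong\Hom_K(V_\mu,\C)=(V_\mu^*)^K$, and $\dim(V_\mu^*)^K=\dim V_\mu^K$ because $K$ is compact. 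Equivalently, Peter--Weyl decomposes $L^2(G/K)\cong\bigoplus_{\mu\in\gL^+(G)}V_\mu\otimes(V_\mu^*)^K$ with $G$ acting on the first factor. Hence $\pi_\mu$ embeds in $L^2(G/K)$ exactly when $V_\mu^K\ne0$, which is $(2)\Leftrightarrow(3)$, and its multiplicity equals $\dim V_\mu^K$; both final assertions therefore reduce to the single inequality $\dim V_\mu^K\le1$.

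I would obtain $\dim V_\mu^K\le1$ from the fact that $(G,K)$ is a Gelfand pair. The anti-automorphism $\tau(g)=\theta(g)^{-1}$ fixes $K$ pointwise, and since $\theta|_\fa=-\id$ the Cartan decomposition $G=KAK$ shows $\tau(g)\in KgK$ for every $g$. Thus $f\mapsto f\circ\tau$ is the identity on the convolution algebra of continuous $K$-bi-invariant functions and reverses convolution, so that algebra is commutative; commutativity of the Hecke algebra forces each irreducible to occur in $L^2(G/K)$ with multiplicity at most one, i.e. $\dim V_\mu^K\le1$. Together with existence (below) this gives $\dim V_\mu^K=1$.

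The heart is $(1)\Leftrightarrow(2)$. Fix a $\theta$-stable Cartan subalgebra $\fh=\fa\oplus\fb$ and order $\Delta^+(\fg_\C,\fh_\C)$ compatibly with $\Sigma^+$, so that $\lambda:=\mu|_\fa$ is the highest restricted weight of $V_\mu$ and the top restricted-weight space $V_\mu[\lambda]$ is an irreducible $\fm$-module of highest weight $\mu|_\fb$ containing the $\Delta^+$-highest weight line $\C v_\mu$. For $(2)\Rightarrow(1)$ I would use the identification $V_\mu^K\cong(V_\mu[\lambda])^M$: an $M$-fixed vector in the irreducible $\fm$-module $V_\mu[\lambda]$ can exist only when that module is trivial, which forces $\mu|_\fb=0$, so $\mu\in i\fa^*$; the compatibility of the two orderings then turns $\Delta^+$-dominance of $\mu$ into $\tfrac{(\mu,\alpha)}{(\alpha,\alpha)}\in\Z^+$ for all $\alpha\in\Sigma^+$, i.e. $\mu\in\Lambda^+(G,K)$. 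Conversely, for $(1)\Rightarrow(2)$, given $\mu\in\Lambda^+(G,K)$ I would first check that $\mu$ is a genuine element of $\gL^+(G)$, so that $\pi_\mu$ exists, and then produce a candidate fixed vector by averaging, $e_\mu=\int_K\pi_\mu(k)v_\mu\,dk\in V_\mu^K$.

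\emph{The main obstacle} is the nonvanishing $e_\mu\ne0$, equivalently $\int_K\langle\pi_\mu(k)v_\mu,v_\mu\rangle\,dk\ne0$, since this is precisely the point at which the integrality condition must be used (it is also where disconnectedness of $M$ could in principle intervene). My plan is to reduce it to real rank one: each simple restricted root $\alpha\in\Psi_{1/2}(\fg,\fa)$ generates a $\theta$-stable $\mathfrak{su}(2)$-triple and a rank-one symmetric subpair, on which $\tfrac{(\mu,\alpha)}{(\alpha,\alpha)}\in\Z^+$ is exactly the condition, verified by a direct $\mathfrak{su}(2)$ computation, under which the $\mathfrak{su}(2)$-string through $v_\mu$ contains a vector fixed by the rank-one maximal compact subgroup. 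Assembling these strings, using that $v_\mu$ generates $V_\mu$ under the opposite nilradical $\theta\fn$, shows the averaged vector survives. This simultaneously settles existence and, with the Gelfand-pair bound, the dimension and multiplicity-one statements.
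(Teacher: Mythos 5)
The paper offers no argument of its own here --- its ``proof'' is the single line ``See [Theorem 4.1, p.\ 535]'' of Helgason's \emph{Groups and Geometric Analysis} --- so your proposal has to be measured against the standard argument cited there. Most of your outline matches it and is correct: $(2)\Leftrightarrow(3)$ together with ``multiplicity $=\dim V_\mu^{K_n}$'' by Frobenius reciprocity, the bound $\dim V_\mu^{K_n}\leqq 1$ from commutativity of the $K_n$-bi-invariant convolution algebra via $\tau(g)=\theta(g)^{-1}$ and $G_n=K_nAK_n$, and the passage from $V_\mu^{K_n}$ to $M$-invariants in the highest restricted weight space are all sound.

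The genuine gap is in your $(2)\Rightarrow(1)$. From the $M$-fixed vector you extract only the Lie-algebra consequence $\mu|_{\fh_n\cap\fk_n}=0$ and then assert that $\Delta^+$-dominance plus compatibility of the orderings yields $\tfrac{(\mu,\alpha)}{(\alpha,\alpha)}\in\Z^+$ for all $\alpha\in\Sigma^+_n$. That implication is false: note that the paper's definition of $\Lambda^+(G_n,K_n)$ has \emph{no} factor of $2$ in the numerator, so it is strictly stronger than dominance. Already for $\SU(2)/\SO(2)$ one has $\fh=\fa$, so $\mu|_{\fh\cap\fk}=0$ is vacuous and every dominant integral weight (highest weight $m$) passes your test, yet $\tfrac{(\mu,\alpha)}{(\alpha,\alpha)}=m/2$ and only even $m$ occur in $L^2(S^2)$ (only the odd-dimensional representations are spherical). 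More generally, whenever $\theta\alpha=-\alpha$ one has $(\alpha|_\fa,\alpha|_\fa)=(\alpha,\alpha)$, so dominance gives only $2(\mu,\alpha)/(\alpha,\alpha)\in\Z^+$; this affects every split factor, e.g.\ $\SU(n)/\SO(n)$, where the spherical weights are exactly the even ones. The missing integrality is precisely the information carried by the component group of $M=Z_{K_n}(\fa_n)$, which is disconnected in these cases: elements of $M$ outside the identity component act on the highest weight line $\C v_\mu$ by roots of unity (for a real root, by $(-1)^{2(\mu,\alpha)/(\alpha,\alpha)}$), and their triviality is the evenness condition. By replacing $M$ with $\fm$ you discard exactly this part of the invariance. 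A symmetric caution applies to your $(1)\Rightarrow(2)$: the nonvanishing of $\int_{K}\pi_\mu(k)v_\mu\,dk$ is indeed the crux, but ``assembling'' the rank-one $\mathfrak{su}(2)$-strings attached to noncommuting simple restricted roots is not yet an argument, and there too the full disconnected centralizer must be controlled.
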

\begin{proof} See \cite[Theorem 4.1, p. 535]{He1984}.
\end{proof}

\begin{remark} {\rm If $G_n$  is compact but not simply connected, then one has
to replace $\Lambda_n^+$ and $\Lambda^+(G_n,K_n)$ by sub semi--lattices of
weights $\mu$ such that the group
homomorphism $\exp (X)\mapsto e^{\mu (X)}$ is well
defined on the maximal torus
$H_n$, and then the proof of Theorem \ref{t-CH} goes through without change.
See, for example, \cite{OS}.} \hfill $\diamondsuit$
\end{remark}

Define linear functionals $\xi_{n,j}\in i\ga_n^*$ by
\begin{equation}\label{fundclass1}
\frac{ \langle \xi_{n,i},\alpha_{n,j} \rangle }
{\langle \alpha_{n,j},\alpha_{n,j} \rangle} = \delta_{i,j} \text{ for }
1 \leqq j \leqq r_n\ \ .
\end{equation}
Then for $\alpha \in \Sigma_{2,n}^+$
$$\frac{ \langle  \xi_{n,i},\alpha \rangle  }
{ \langle  \alpha,\alpha \rangle  }\in \Z^+\, .$$
If $\alpha \in \Sigma^+\setminus \Sigma_{2,n}^+$, then
$2\alpha \in \Sigma_{2,n}^+$ and
$$\frac{ \langle  \xi_{n,i},\alpha \rangle }
{ \langle  \alpha,\alpha \rangle  }
= 2\frac{ \langle  \xi_{n,i},2\alpha \rangle  }
{ \langle 2\alpha,2 \alpha \rangle  }\in \Z^+\, .$$
Hence $\xi_{n,i}\in \Lambda^+_n$. The weights $\xi_{n,j}$ are the
\textit{class 1 fundamental weights for}
$(\mfg_n,\gk_n)$. We set
$$\Xi_n=\{\xi_{n,1},\ldots ,\xi_{n,r_n}\}\, .$$
For $I=(k_1,\ldots ,k_{r_n})\in (\Z^+)^{r_n}$ define
$\mu_I:=\mu(I)=k_1\xi_{n,1}+\ldots +k_{r_n}\xi_{n,r_n}$.
\begin{lemma}\label{le-KnWeights} If $\mu \in i\ga_n^*$ then
$\mu \in \Lambda^+ (G_n,K_n)$ if and only if
$\mu =\mu_I$ for some $I\in (\Z^+)^{r_n}$.
\end{lemma}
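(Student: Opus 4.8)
The plan is to exploit that the class~1 fundamental weights $\xi_{n,1},\dots,\xi_{n,r_n}$ form the basis of $i\fa_n^*$ dual (up to the normalizing factors $\langle\alpha_{n,j},\alpha_{n,j}\rangle$) to the simple roots $\alpha_{n,1},\dots,\alpha_{n,r_n}$ of $\Sigma_{2,n}^+$. First I would record that, since these simple roots are a basis of $i\fa_n^*$, the relations (\ref{fundclass1}) make $\{\xi_{n,j}\}$ a basis as well; consequently any $\mu\in i\fa_n^*$ has a unique expansion $\mu=\sum_j c_j\,\xi_{n,j}$ with $c_j\in\R$, and pairing with $\alpha_{n,i}$ and invoking (\ref{fundclass1}) extracts the coefficient as $c_i=\langle\mu,\alpha_{n,i}\rangle/\langle\alpha_{n,i},\alpha_{n,i}\rangle$.

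For the direction ``$\mu_I\in\Lambda^+(G_n,K_n)$'' I would simply combine two facts already in hand: each $\xi_{n,j}$ lies in $\Lambda^+(G_n,K_n)$ (established just above via the two cases $\alpha\in\Sigma_{2,n}^+$ and $\alpha\in\Sigma^+\setminus\Sigma_{2,n}^+$), and the defining condition $\langle\,\cdot\,,\alpha\rangle/\langle\alpha,\alpha\rangle\in\Z^+$ is additive in the first slot. Hence a nonnegative integer combination $\mu_I=\sum_j k_j\,\xi_{n,j}$ again satisfies the condition for every $\alpha\in\Sigma^+$, so it is class~1.

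The substantive direction is the converse. Given $\mu\in\Lambda^+(G_n,K_n)$, I would write $\mu=\sum_j c_j\,\xi_{n,j}$ and compute $c_i=\langle\mu,\alpha_{n,i}\rangle/\langle\alpha_{n,i},\alpha_{n,i}\rangle$ as in the first step. The key observation is that each simple root $\alpha_{n,i}$ is itself an element of $\Sigma_{2,n}^+\subseteq\Sigma^+$, so the integrality condition defining $\Lambda^+(G_n,K_n)$ may be applied with $\alpha=\alpha_{n,i}$, forcing $c_i\in\Z^+$. Setting $I=(c_1,\dots,c_{r_n})$ then gives $\mu=\mu_I$ with $I\in(\Z^+)^{r_n}$, completing the equivalence. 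I expect the only point requiring care --- and the main, if modest, obstacle --- to be the bookkeeping that guarantees $\{\xi_{n,j}\}$ is genuinely a basis of $i\fa_n^*$ of the correct cardinality $r_n=\dim\fa_n$ and that the $\alpha_{n,i}$ indeed lie in $\Sigma^+$; both follow from $\Sigma_{2,n}$ being a reduced root system of full rank in $\fa_n^*$.
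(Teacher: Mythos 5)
Your proof is correct and is essentially the paper's argument: the paper disposes of this lemma with ``this follows directly from the definition of $\xi_{n,j}$,'' and your writeup is exactly the expansion of that remark (the $\xi_{n,j}$ form the normalized dual basis to the simple roots of $\Sigma_{2,n}^+$, the coefficients of $\mu$ are read off as $\langle\mu,\alpha_{n,i}\rangle/\langle\alpha_{n,i},\alpha_{n,i}\rangle$, and integrality in both directions comes from the defining condition of $\Lambda^+(G_n,K_n)$ together with the fact, established just before the lemma, that each $\xi_{n,i}$ is itself class~$1$). No gaps.
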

\begin{proof} This follows directly from the definition of
$\xi_{n,j}$.
\end{proof}
\begin{lemma}\label{le-ResInLambdan} Suppose that $M_k$ is a
propagation of $M_n$. Let
$I_k=(m_1,\ldots ,m_k)\in (\Z^+)^{r_k}$ and
$\mu =\mu_{I_k}$. Then
$\mu|_{\fa_n}\in \Lambda^+(G_n,K_n)$. In particular
$\xi_{k,j}|_{\fa_n}\in \Lambda^+ (G_n,K_n)$ for $1 \leqq j \leqq r_k$.
\end{lemma}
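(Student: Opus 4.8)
The plan is to deduce everything from the ``in particular'' assertion, namely that each $\res \xi_{k,j}$, $1\le j\le r_k$, lies in $\Lambda^+(G_n,K_n)$. Indeed, by Lemma \ref{le-KnWeights} the set $\Lambda^+(G_n,K_n)$ is precisely the additive monoid $\Z^+\xi_{n,1}+\cdots+\Z^+\xi_{n,r_n}$; since $\mu=\mu_{I_k}=\sum_{j=1}^{r_k} m_j\,\xi_{k,j}$ with all $m_j\in\Z^+$ and restriction to $\fa_n$ is linear, $\res\mu=\sum_j m_j\,\res\xi_{k,j}$, so the first statement follows once each $\res\xi_{k,j}$ is known to lie in that monoid. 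First I would reduce to the case that $M_n$ (hence each relevant factor) is irreducible: the Weyl groups, the weights $\xi$, and the lattices $\Lambda^+$ all decompose as products over the irreducible factors, and by the remark following Lemma \ref{X} the simple roots can still be numbered so that $\alpha_{n,i}=\res\alpha_{k,i}$ for $1\le i\le r_n$.

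Next I would turn the membership $\res\xi_{k,j}\in\Lambda^+(G_n,K_n)$ into a finite set of numerical conditions. Since $\{\xi_{n,1},\dots,\xi_{n,r_n}\}$ is by definition the basis of $i\fa_n^*$ dual to the functionals $\lambda\mapsto \langle\lambda,\alpha_{n,i}\rangle/\langle\alpha_{n,i},\alpha_{n,i}\rangle$, I expand
\[
\res\xi_{k,j}=\sum_{i=1}^{r_n} a_{ji}\,\xi_{n,i},\qquad
a_{ji}=\frac{\langle \res\xi_{k,j},\alpha_{n,i}\rangle}{\langle \alpha_{n,i},\alpha_{n,i}\rangle},
\]
and recall from Lemma \ref{le-KnWeights} (equivalently, from the definition of $\Lambda^+(G_n,K_n)$) that $\res\xi_{k,j}\in\Lambda^+(G_n,K_n)$ if and only if every coefficient $a_{ji}$ is a nonnegative integer. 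The task is thus reduced to proving $a_{ji}\in\Z^+$ for all $1\le i\le r_n$, $1\le j\le r_k$. To evaluate $a_{ji}$ I would use the two structural inputs already available: the compatibility $\langle X,Y\rangle_k=\langle X,Y\rangle_n$ for $X,Y\in\fa_n$, which identifies the restriction map $i\fa_k^*\to i\fa_n^*$ with orthogonal projection onto $i\fa_n^*$, and Lemma \ref{X}, which gives $\alpha_{n,i}=\res\alpha_{k,i}$ for $i\le r_n$. For every simple root $\alpha_{k,i}$ lying strictly to the right of the junction (i.e.\ $i<r_n$) the associated vector already lies in $\fa_n$, so the projection is transparent and $a_{ji}$ coincides with the level-$k$ coroot integer $\langle\xi_{k,j},\alpha_{k,i}\rangle/\langle\alpha_{k,i},\alpha_{k,i}\rangle=\delta_{ji}\in\Z^+$.

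The remaining, and genuinely delicate, cases are the junction root $\alpha_{k,r_n}$ (whose dual vector has a nonzero component along the adjoined left-hand nodes, so that projection onto $\fa_n$ no longer commutes with the pairing) and the adjoined fundamental weights $\xi_{k,j}$ with $j>r_n$ (which need not restrict to a single $\xi_{n,i}$). For these I would compute $a_{ji}$ directly from the explicit matrix realizations of types $A,B,C,D$ set up in Section \ref{sec2}, in which $\fa_n\hookrightarrow\fa_k$ is the embedding obtained by adjoining zero coordinates on the left and the inner product is the trace form; there the restriction of a functional is literal truncation of coordinates, and comparing the truncated coordinate expression of $\xi_{k,j}$ with the coordinate expressions of the $\xi_{n,i}$ exhibits each $a_{ji}$ as a nonnegative integer. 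This case-by-case bookkeeping at the junction, entirely parallel to the invariant-polynomial computations carried out type by type in Section \ref{sec2} and summarized in Theorem \ref{th-AdmExtG/K}, is the main obstacle; everything else is formal.
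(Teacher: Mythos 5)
Your argument is correct in outline but it is a genuinely different route from the paper's. The paper proves this lemma with no case analysis at all: it takes a highest weight unit vector $v_\mu$ and the $K_k$--fixed unit vector $e_\mu$ in $V_\mu$, observes (using the compatibility of the positive systems) that the cyclic $G_n$--module $\langle \pi_\mu(G_n)v_\mu\rangle$ decomposes into irreducibles all of highest weight $\mu|_{\fh_n}$, and then notes that the projection of $e_\mu$ onto one of these summands is a nonzero $K_n$--fixed vector; the Cartan--Helgason theorem (Theorem \ref{t-CH}) then forces $\mu|_{\fa_n}\in\Lambda^+(G_n,K_n)$. Your route instead reduces, via Lemma \ref{le-KnWeights} and linearity, to checking that each $\xi_{k,j}|_{\fa_n}$ is a $\Z^+$--combination of the $\xi_{n,i}$, and verifies the coefficients $a_{ji}$ type by type. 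That plan does close: in each of types $A$--$D$, in the coordinates of Section \ref{sec2}, one finds $\xi_{k,j}|_{\fa_n}=\xi_{n,j}$ for $j\leqq r_n$ (this is exactly Lemma \ref{simple-res}, which the paper proves by citing the explicit fundamental weights in \cite{GW1998}) and $\xi_{k,j}|_{\fa_n}=0$ for $j>r_n$, so in fact $a_{ji}=\delta_{ji}$. Two remarks on your bookkeeping: the junction root is $\alpha_{k,r_n+1}$, not $\alpha_{k,r_n}$, and all of $\alpha_{k,1},\dots,\alpha_{k,r_n}$ already lie in $\fa_n$ as vectors, so your ``transparent'' computation covers every $i\leqq r_n$ and the only genuinely new calculation is the restriction of the adjoined weights $\xi_{k,j}$, $j>r_n$ --- which you have sketched but not carried out, and which is the entire content in your approach. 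What each approach buys: yours yields the sharper explicit statement that restriction sends $\xi_{k,j}$ to $\xi_{n,j}$ or to $0$, but only for the classical realizations; the paper's argument is type-free, works verbatim for any propagation, and simultaneously constructs the $K_n$--fixed vector and the distinguished $G_n$--submodule of $V_{\mu}$ that are reused immediately afterwards in Lemma \ref{resmult} and Theorem \ref{l-inductiveSystemOfRep}.
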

\begin{proof}
Let $v_\mu \in V_\mu$ be a nonzero highest weight vector and
$e_\mu \in V_\mu$ a $K_k$--fixed unit vector. Denote by
$W=\langle\pi_\mu (G_n)v_\mu \rangle$
the cyclic $G_n$-module generated by $v_\mu $ and let $\mu_n=\mu|_{\ga_n}$.

Write $W=\bigoplus_{j=1}^s W_j$ with $W_j$ irreducible.
If $W_j$ has highest weight $\nu_j \ne \mu$ then $v_\mu  \perp W_j$
so $\langle\pi_\mu (G_n)v_\mu \rangle \perp W_j$, contradicting
$W_j \subset W =\bigoplus W_i$.  Now each $W_j$ has highest weight
$\mu$.  Write $v_\mu =v_1+\ldots +v_s$ with $0 \ne v_j\in W_j$.
As $( v_\mu ,e_{\mu} ) \not= 0$ it follows that
$( v_j,e_{\mu}) \not= 0$ for some $j$. But then the projection
of $e_\mu$ onto $W_j$ is a non-zero $K_n$ fixed vector in $W_j^{K_n}\not= 0$
and hence $\mu |_{n}\in\Lambda^+(G_n,K_n)$.
\end{proof}

\begin{lemma}[\cite{W2008a}, Lemma 6]\label{simple-res}
Assume that $M_k$ is a propagation of $M_n$.  Recall the root ordering
of {\rm (\ref{rootorder})}.  If $1 \leqq j \leqq r_n$ then $\xi_{k,j}$
is the unique element of $\Xi_k$ whose restriction of $\ga_n$ is $\xi_{n,j}$.
\end{lemma}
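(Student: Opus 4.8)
The plan is to read the restriction off directly from the duality that defines the $\xi$'s, feeding in one geometric fact supplied by the explicit realizations of Section \ref{sec2}. Equip $i\fa_k^*$ with the inner product $\langle\cdot,\cdot\rangle$ coming from the compatible $K_k$--invariant form, and write $\fa_n^{\perp}$ for the orthogonal complement of $\fa_n$ in $\fa_k$. Identify $i\fa_n^*$ with the subspace $U\subset i\fa_k^*$ of functionals that vanish on $\fa_n^{\perp}$; then the restriction map $\lambda\mapsto\lambda|_{\fa_n}$ is the orthogonal projection of $i\fa_k^*$ onto $U$, and it is an isometry on $U$. The one fact I need from (\ref{rootorder}) is
\[
\alpha_{k,i}\in U \quad\text{for}\quad 1\leqq i\leqq r_n ,
\]
that is, each of the first $r_n$ simple roots of $\Sigma_{2,k}$ already vanishes on $\fa_n^{\perp}$. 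Combined with Lemma \ref{X} (which gives $\alpha_{k,i}|_{\fa_n}=\alpha_{n,i}$), this says that for $i\leqq r_n$ restriction carries $\alpha_{k,i}$ \emph{isometrically} onto $\alpha_{n,i}$. I would verify it by inspecting the four diagram types $A,B,C,D$ of (\ref{rootorder}): in each case $\fa_n^{\perp}$ is spanned by the coordinate directions attached to the simple roots added on the left, while $\alpha_{k,1},\dots,\alpha_{k,r_n}$ involve only the remaining coordinates.

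Granting this, the existence statement $\xi_{k,j}|_{\fa_n}=\xi_{n,j}$ for $1\leqq j\leqq r_n$ is immediate. Fix such a $j$ and set $\eta=\xi_{k,j}|_{\fa_n}$. For $1\leqq i\leqq r_n$, since restriction is the orthogonal projection onto $U$ (hence self-adjoint) and fixes $\alpha_{k,i}\in U$ while sending it to $\alpha_{n,i}$, we get $\langle\eta,\alpha_{n,i}\rangle=\langle\xi_{k,j},\alpha_{k,i}\rangle$ and $\langle\alpha_{n,i},\alpha_{n,i}\rangle=\langle\alpha_{k,i},\alpha_{k,i}\rangle$, so that
\[
\frac{\langle \eta,\alpha_{n,i}\rangle}{\langle\alpha_{n,i},\alpha_{n,i}\rangle}
=\frac{\langle \xi_{k,j},\alpha_{k,i}\rangle}{\langle\alpha_{k,i},\alpha_{k,i}\rangle}
=\delta_{ij},
\]
the last equality being the defining property (\ref{fundclass1}) of $\xi_{k,j}$. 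Because the $\alpha_{n,i}$, $1\leqq i\leqq r_n$, form a basis of $i\fa_n^*$, these equations characterize $\xi_{n,j}$, whence $\eta=\xi_{n,j}$.

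For uniqueness I would show that no other element of $\Xi_k$ restricts to $\xi_{n,j}$. The vectors $\alpha_{k,1},\dots,\alpha_{k,r_n}$ lie in $U$ and are linearly independent (their restrictions $\alpha_{n,i}$ are), so they span the $r_n$--dimensional space $U$. If $i>r_n$ then $\langle\xi_{k,i},\alpha_{k,l}\rangle=0$ for all $l\leqq r_n$ by (\ref{fundclass1}), hence $\xi_{k,i}\perp U$ and $\xi_{k,i}|_{\fa_n}=0\neq\xi_{n,j}$; and if $i\leqq r_n$ with $i\neq j$ then $\xi_{k,i}|_{\fa_n}=\xi_{n,i}\neq\xi_{n,j}$ by the existence part together with the linear independence of the $\xi_{n,\bullet}$. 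Thus $\xi_{k,j}$ is the unique element of $\Xi_k$ restricting to $\xi_{n,j}$. The only real obstacle is the geometric fact $\alpha_{k,i}\in U$ for $i\leqq r_n$: this is precisely where the ``added on the left'' form of propagation is used, and it has to be checked against the coordinate descriptions of (\ref{rootorder}) rather than deduced formally from Lemma \ref{X} alone.
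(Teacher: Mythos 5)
Your proof is correct, but it takes a different route from the paper. The paper disposes of the lemma in two lines: the case $\fa_k=\fa_n$ is trivial, and for $r_n<r_k$ it simply appeals to the explicit closed-form expressions for the fundamental weights of the classical root systems in \cite[p. 102]{GW1998}, from which the restriction behaviour can be read off coordinate by coordinate. You instead avoid ever writing down the $\xi_{k,j}$ explicitly: you observe that restriction $i\fa_k^*\to i\fa_n^*$ is orthogonal projection onto the subspace $U$ of functionals killing $\fa_n^\perp$, isolate the one geometric input --- that $\alpha_{k,1},\dots,\alpha_{k,r_n}$ already lie in $U$, so that together with Lemma \ref{X} restriction carries them isometrically onto $\alpha_{n,1},\dots,\alpha_{n,r_n}$ --- and then both existence and uniqueness fall out of the duality (\ref{fundclass1}) and self-adjointness of the projection (in particular $\xi_{k,i}\perp U$ for $i>r_n$, so those weights restrict to $0$). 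The input fact does need the case check against (\ref{rootorder}) that you flag, and it does hold in all four types (for $A$ one notes that $\fa_n^\perp$ consists of vectors whose first $r_n+1$ coordinates agree, on which $f_{i+1}-f_i$ vanishes for $i\leqq r_n$; for $B,C,D$ the complement is spanned by the added coordinate directions, which the first $r_n$ simple roots do not involve). What your argument buys is uniformity and the extra conclusion $\xi_{k,i}|_{\fa_n}=0$ for $i>r_n$ for free; what the paper's citation buys is brevity, at the cost of depending on the tabulated formulas.
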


\begin{proof} This is clear when $\fa_k=\fa_n$.
If $r_n<r_k$ it follows from the explicit construction
of the fundamental weights for classical root system; see
\cite[p. 102]{GW1998}.
\end{proof}

\begin{lemma}\label{resmult}
Assume that $\mu_k \in \Lambda^+(G_k,K_k)$ is a combination
of the first $r_n$ fundamental weights,  $\mu =
\sum_{j=1}^{r_n} k_j \xi_{k,j}$.
Let
$\mu_n:=\mu|_{\ga_n}=\sum_{j=1}^{r_n}k_j \xi_{n,j}\, $.
If $v$ is a nonzero highest weight vector in $V_{\mu_k}$ then
$\langle\pi_{\mu_k} (G_n)v\rangle$ is irreducible and isomorphic to $V_{\mu_n}$.
Furthermore, $\pi_{\mu_n}$
occurs with multiplicity one in $\pi_{\mu_k}|_{G_n}$.
\end{lemma}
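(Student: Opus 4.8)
The plan is to prove everything by first reducing to the case where $M_n$ (hence $M_k$) is irreducible, since propagation and the restriction of weights respect the decomposition into irreducible factors. Throughout I identify $i\fa_n^*$ with the functionals on $\fh_n$ that vanish on $\fh_n\cap\fk_n$; since $\fh_n\cap\fk_n=\fh_k\cap\fk_k\cap\fg_n\subseteq\fh_k\cap\fk_k$ and $\mu_k\in i\fa_k^*$, the weight $\mu_k$ vanishes on $\fh_n\cap\fk_n$, so $\mu_n=\mu_k|_{\fa_n}=\mu_k|_{\fh_n}$ and, by Lemma \ref{simple-res}, $\mu_n=\sum_{j=1}^{r_n}k_j\xi_{n,j}\in\Lambda^+(G_n,K_n)$ (Lemma \ref{le-ResInLambdan}). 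The heart of the first assertion is to show that the $\fg_k$-highest weight vector $v$ is simultaneously a $\fg_n$-highest weight vector of weight $\mu_n$.

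First I would fix the positive systems compatibly, as in Section \ref{sec2} (equivalently, use a regular element $H_0\in\fh_n$ to define $\Delta_n^+$ and extend it lexicographically to $\Delta_k^+$), so that every $\gamma\in\Delta_k$ with $\gamma|_{\fh_n}\in\Delta_n^+$ already lies in $\Delta_k^+$. Given $\alpha\in\Delta_n^+$ and $X\in(\fg_n)_\alpha\subseteq\fg_k$, decomposing $X$ into $\fh_k$-root vectors shows that each nonzero component lies in $(\fg_k)_\gamma$ for some root $\gamma$ with $\gamma|_{\fh_n}=\alpha\in\Delta_n^+$, hence $\gamma\in\Delta_k^+$; as $v$ is annihilated by all such positive root vectors, $Xv=0$. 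Thus $\fn_n^+v=0$, so $W:=\langle\pi_{\mu_k}(G_n)v\rangle=U(\fg_n)v$ is a finite-dimensional cyclic highest weight module of highest weight $\mu_n$. Its $\mu_n$-weight space is $\C v$; writing $W$ as a sum of irreducibles and projecting $v$ to each summand produces a highest weight vector of weight $\mu_n$ in each summand, so every summand is isomorphic to $V_{\mu_n}$ and contributes to the one-dimensional $\mu_n$-weight space. Hence there is exactly one summand and $W\cong V_{\mu_n}$, proving the first statement.

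For the multiplicity I would first note that $\mu_n$ is a maximal $\fh_n$-weight of $V_{\mu_k}$: every weight is $\mu_k-Q$ with $Q$ a sum of roots in $\Delta_k^+$, each nonnegative on $H_0$, so its restriction to $\fh_n$ cannot exceed $\mu_n$ in the $H_0$-direction, and in particular no weight $\mu_n+\alpha$ ($\alpha\in\Delta_n^+$) occurs. Consequently $\fn_n^+$ kills the whole $\fh_n$-weight space $(V_{\mu_k})[\mu_n]$, every vector there is a $\fg_n$-highest weight vector of weight $\mu_n$, and
\[
[\pi_{\mu_k}|_{G_n}:\pi_{\mu_n}]=\dim\Hom_{\fg_n}(V_{\mu_n},V_{\mu_k})=\dim (V_{\mu_k})[\mu_n].
\]
So the task reduces to showing that $\mu_k$ is the only $\fh_k$-weight of $V_{\mu_k}$ restricting to $\mu_n$ on $\fh_n$.

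Such a weight has the form $\mu_k-Q$ where $Q$ is a nonnegative integral combination of the roots $\Delta_k^{00}=\{\gamma\in\Delta_k\mid\gamma|_{\fh_n}=0\}$ of the reductive centralizer $\fz_{\fg_k}(\fh_n)$; indeed $Q|_{\fh_n}=0$ forces each contributing root to vanish on $\fh_n$ by the choice of $H_0$. The decisive point is the orthogonality $\langle\mu_k,\gamma\rangle=0$ for all $\gamma\in\Delta_k^{00}$, i.e. that $v$ generates the trivial representation of the semisimple part of $\fz_{\fg_k}(\fh_n)$: granting this, the standard inequality $\Vert\nu\Vert\leqq\Vert\mu_k\Vert$ for weights $\nu$ of $V_{\mu_k}$ together with $\Vert\mu_k-Q\Vert^2=\Vert\mu_k\Vert^2+\Vert Q\Vert^2$ forces $Q=0$, hence $\dim (V_{\mu_k})[\mu_n]=1$. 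I expect this orthogonality to be the main obstacle: it is precisely where the hypothesis that $\mu_k$ involves only the first $r_n$ fundamental weights $\xi_{k,1},\dots,\xi_{k,r_n}$ must enter, and I would try to extract it case by case from the explicit description of the $\xi_{k,j}$ and of the roots vanishing on $\fh_n$ recorded for each classical type in Section \ref{sec2} (using $\langle\xi_{k,i},\alpha\rangle=0$ for $i\leqq r_n$ against the simple roots of $\Delta_k^{00}$). Since $\dim V_{\mu_n}^{K_n}=1$ by Cartan--Helgason (Theorem \ref{t-CH}), an alternative route, should the bare weight count prove delicate in the non-simply-laced and type-$D$ situations, is to count the relevant spherical vectors rather than the full $\fh_n$-weight multiplicity.
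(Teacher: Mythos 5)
Your first half is essentially right and is close in spirit to what the paper does for irreducibility: $v$ is a $\fg_n$-highest weight vector of weight $\mu_n$, so $\langle\pi_{\mu_k}(G_n)v\rangle=U(\fg_n)v$ is a cyclic highest weight module whose $\mu_n$-weight space is $\C v$, hence irreducible and isomorphic to $V_{\mu_n}$. (One repair: it is \emph{not} true that every $\gamma\in\Delta_k$ with $\gamma|_{\fh_n}\in\Delta_n^+$ lies in $\Delta_k^+$ --- for $B_2\subset B_3$ the negative root $f_1-f_3$ restricts to $f_1$. What saves the step is that $\fh_k$ normalizes $\fg_n$, so $(\fg_n)_\alpha$ sits inside the single root space $(\fg_k)_{\widetilde\alpha}$ for the extension $\widetilde\alpha$ of $\alpha$ by zero, and $\widetilde\alpha\in\Delta_k^+$.)

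The multiplicity argument, however, has a genuine gap: $\mu_n$ is in general \emph{not} a maximal $\fh_n$-weight of $V_{\mu_k}$, so the multiplicity is not $\dim (V_{\mu_k})[\mu_n]$; and the orthogonality $\langle\mu_k,\gamma\rangle=0$ for $\gamma\in\Delta_k^{00}$, which you correctly identified as the crux, is in fact false. Take the split propagation of type $B_2\subset B_3$ (e.g.\ $\SO(5)/\SO(2)\times\SO(3)\subset \SO(7)/\SO(3)\times\SO(4)$, where $\fh=\fa$) and $\mu_k=\xi_{k,2}=2(f_2+f_3)$, so $\mu_n=2f_2$. The positive root $f_3-f_1$ of $\fg_k$ restricts to the \emph{negative} root $-f_1$ of $\fg_n$, so your premise that every $\gamma\in\Delta_k^+$ is nonnegative on $H_0$ fails for any positive system in which $\mu_k$ is dominant; and indeed $\mu_k-(f_3-f_1)$ is a weight of $V_{\mu_k}$ (as $\langle\mu_k,(f_3-f_1)^\vee\rangle=2>0$) whose restriction is $\mu_n+f_1$, so $\mu_n$ is not maximal. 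Likewise $f_3\in\Delta_k^{00}$ but $\langle\mu_k,f_3\rangle=2\neq 0$, and $\mu_k-f_3$ is a weight restricting to $\mu_n$, so $\dim(V_{\mu_k})[\mu_n]\geq 2$ even though the multiplicity of $\pi_{\mu_n}$ is $1$ (the branching $\so(7)\downarrow\so(6)\downarrow\so(5)$ gives $V_{(2,2)}\oplus V_{(2,1)}\oplus V_{(2,0)}$). So counting the whole $\fh_n$-weight space cannot work; one must count only primitive vectors. That is what the paper's proof does: it takes an arbitrary $\fg_n$-highest weight vector $w$ of weight $\mu_n$, decomposes it into $\fh_k$-weight components of weights $\mu_k-\beta$ with $\beta\geqq 0$ and $\beta|_{\fh_n}=0$, argues that such a $\beta$ is a nonnegative combination of simple roots vanishing on $\fa_n$ and hence is orthogonal to $\mu_k$, and concludes via the reflection $s_\beta(\mu_k-\beta)=\mu_k+\beta$ that $\beta=0$, i.e.\ $w\in\C v$.
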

\begin{proof} Each $G_n$--irreducible summand $W$
in $\langle\pi_{\mu_k} (G_n)v\rangle$ has highest weight $\mu_n$.
Fix one such $G_n$--submodule $W$ and let $w \in W$ be a nonzero highest
weight vector. Write $w=w_1+\ldots +w_s$ where each $w_j$ is of some
$\gh_k$--weight $\mu_k - \sum_ik_{j,i}\beta_i$ and where each
$\beta_i$ is a simple root in $\Sigma^+(\mfg_{k},\gh_{k})$
and each $k_{j,i}\in \Z^+$. As $\mu_k|_{\gh_n}=\mu_{n}$ it
follows that
$\langle \sum_{i}k_{j,i}\beta_i|_{\fh_n}, \alpha \rangle = 0$
for all $\alpha \in \Delta (\fg_n,\fh_n)$. Thus
$\sum_{i}k_{j,i}\beta_i|_{\fh_n}=0$. In view of (\ref{rootorder})
each $\langle \beta_i,\alpha_j\rangle \leqq 0$ for
$\alpha_j \in \Delta (\fg_n,\fh_n)$ simple (specifically
$\langle \beta_i,\alpha_j\rangle = 0$ unless $\beta_i = f_{c+1} - f_c$ and
$\alpha_j = f_c - f_{c-1}$, for some $c$, in which case
$\langle \beta_i,\alpha_j\rangle = -1$).  Since every $k_{j,i}\in \Z^+$
now $\langle \beta_i , \alpha_j \rangle = 0$ for each
$\alpha_j \in \Delta (\fg_n,\fh_n)$ simple.  Thus $\beta_i|_{\gh_n}=0$.

Because of the compatibility
of the positive systems $\Delta^+(\mfg_{k,\C},\gh_{k,\C})$ and
$\Delta^+(\mfg_{n,\C},\gh_{n,\C})$ there exists a $\beta \in
\Delta^+(\mfg_{k,\C},\gh_{k,\C})$, $\beta|_{\gh_n}=0$,  such that
$\mu_k -\beta$ is a weight in $V_{\mu_n}$.
Writing $\beta$ as a sum of simple roots, we see that each of the
simple roots
has to vanish on $\ga_n$ and hence the restriction to $\ga_{k}$
can not contain any of the simple roots $\alpha_{k,j}$, $ j=1,\ldots ,r_n$.
But then $\beta $ is perpendicular to the fundamental weights
$\xi_{k,j}$, $j=1,\ldots , r_n$.
Hence $s_\beta (\mu_n- \beta)=\mu_n +\beta$ is also a weight, contradicting
the fact that $\mu_n$ is the highest weight. (Here $s_\beta$ is the reflection
in the hyperplane $\beta =0$.) This shows that $\pi_{\mu_n}$ can
only occur once in $\langle \pi_{\mu_k }(G_n)v\rangle$. In particular,
$\langle \pi_{\mu_k}(G_n)v\rangle$ is irreducible.
\end{proof}

Lemma \ref{resmult} allows us to form direct system of representations, as
follows.  For $\ell \in \N$ denote by $0_\ell = (0,\ldots ,0)$ the zero vector
in $\R^\ell$.  For
$I_{n}=(k_1,\ldots ,k_{r_n})\in (\Z^+)^{r_n}$ let
\begin{equation}\label{I-notation}
\begin{aligned}
\bullet\ &\mu_{I,n}= {\sum}_{j=1}^{r_n}k_j\xi_{n,j}\in \Lambda^+_n;\\
\bullet\ &\pi_{I,n} = \pi_{\mu_{I,n}} \text{ the corresponding spherical
representation};\\
\bullet\ &V_{I,n} = V_{\mu_{I,n}}  \text{ a fixed Hilbert space for the
representation } \pi_{I,n};\\
\bullet\ &v_{I,n} = v_{\mu_{I,n}}  \text{ a highest weight unit vector in }
V_{ I,n};\\
\bullet\ &e_{I,n} = e_{\mu_{I,n}} \text{ a } K_n\text{--fixed unit vector in }
V_{I,n}.
\end{aligned}
\end{equation}

We collect our results in the following Theorem.  Compare
\cite[Section 3]{W2008a}.

\begin{theorem}\label{l-inductiveSystemOfRep}
Let $M_k$ propagate $M_n$ and
let $\pi_{I,n}$ be an irreducible representation of $G_n$
with highest weight $\mu_{I,n}\in\Lambda^+(G_n,K_n)$.
Let $I_k=(I_n,0_{r_k-r_n})$. Then the following hold.
\begin{enumerate}
\item $\mu_{I,k}\in \Lambda^+ (G_k,K_k)$ and
$\mu_{I,k}|_{\ga_n}=\mu_{I,n}$.
\item The $G_n$-submodule of $V_{I,k}$ generated
by $v_{I,k}$ is irreducible.
\item The multiplicity of $\pi_{I,n}$ in
$\pi_{I,k}|_{G_n}$ is $1$, in other words there is an unique
$G_n$--intertwining operator $T_{k,n}: V_{I,n}\to V_{I,k}$ such that
$$T_{k,n}(\pi (g)v_{I,n})=\pi_{I,k}(g)v_{I,k}\, .$$
\end{enumerate}
\end{theorem}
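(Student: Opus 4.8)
The plan is to observe that this theorem is essentially an assembly of Lemmas \ref{le-KnWeights}, \ref{simple-res}, and \ref{resmult}, so the work lies in checking that the hypotheses of those lemmas apply to the padded index $I_k = (I_n, 0_{r_k - r_n})$ and then collecting their conclusions. First I would prove (1). Writing $I_n = (k_1, \ldots, k_{r_n})$, the definition of $I_k$ gives $\mu_{I,k} = \sum_{j=1}^{r_n} k_j \xi_{k,j}$, a nonnegative integral combination of the class~$1$ fundamental weights $\xi_{k,j}$; hence $\mu_{I,k} \in \Lambda^+(G_k, K_k)$ by Lemma \ref{le-KnWeights}. For the restriction identity I would invoke Lemma \ref{simple-res}, which gives $\xi_{k,j}|_{\fa_n} = \xi_{n,j}$ for $1 \leqq j \leqq r_n$; summing over $j$ then yields $\mu_{I,k}|_{\fa_n} = \sum_{j=1}^{r_n} k_j \xi_{n,j} = \mu_{I,n}$.

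With (1) in hand, $\mu_{I,k}$ is a combination of precisely the first $r_n$ fundamental weights, so it meets the hypothesis of Lemma \ref{resmult} with $\mu_k = \mu_{I,k}$ and $\mu_n = \mu_{I,n}$. Applying that lemma to the highest weight vector $v = v_{I,k}$ gives at once (2), that $\langle \pi_{I,k}(G_n) v_{I,k} \rangle$ is irreducible, together with the first half of (3), that $\pi_{I,n}$ occurs with multiplicity one in $\pi_{I,k}|_{G_n}$; moreover the generated submodule is isomorphic to $V_{\mu_{I,n}} = V_{I,n}$.

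It then remains to produce the operator $T_{k,n}$. By multiplicity one the space $\Hom_{G_n}(V_{I,n}, V_{I,k})$ is one-dimensional; any nonzero element is injective because $V_{I,n}$ is irreducible, and its image is the unique copy $\langle \pi_{I,k}(G_n) v_{I,k} \rangle$. To fix the normalization I would check that $v_{I,k}$ is a $G_n$-highest weight vector of that copy: since $\fk_n = \fk_k \cap \fg_n$ and $\fh_n = \fh_k \cap \fg_n$ give $\fh_n \cap \fk_n \subseteq \fh_k \cap \fk_k$, the functional $\mu_{I,k}$ vanishes on $\fh_n \cap \fk_n$, so the $\fh_n$-weight of $v_{I,k}$ is $\mu_{I,k}|_{\fa_n} = \mu_{I,n}$, the highest weight of the irreducible submodule, which occurs in a one-dimensional weight space. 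Normalizing by $T_{k,n}(v_{I,n}) = v_{I,k}$ and intertwining gives $T_{k,n}(\pi_{I,n}(g) v_{I,n}) = \pi_{I,k}(g) v_{I,k}$; as the vectors $\pi_{I,n}(g) v_{I,n}$ span $V_{I,n}$, this determines $T_{k,n}$ uniquely.

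I do not expect a genuine obstacle here, since Lemma \ref{resmult} already carries all the representation-theoretic content. The only points needing care are the verification that padding $I_n$ with zeros keeps $\mu_{I,k}$ inside the span of the first $r_n$ fundamental weights, so that Lemma \ref{resmult} applies verbatim, and the confirmation that $v_{I,k}$ legitimately serves as the $G_n$-highest weight vector used to normalize $T_{k,n}$.
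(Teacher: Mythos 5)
Your proposal is correct and follows the same route as the paper, which presents this theorem explicitly as a collection of the preceding results (Lemmas \ref{le-KnWeights}, \ref{simple-res}, and \ref{resmult}); your verification that the padded weight $\mu_{I,k}$ satisfies the hypotheses of Lemma \ref{resmult} and that $v_{I,k}$ is the $G_n$-highest weight vector of the generated submodule is exactly the content already established in the proof of that lemma. Nothing further is needed.
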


\begin{remark} {\rm From this point on, when $m\leqq q$ we will always
assume that the Hilbert space $V_{I,m}$ is realized inside $V_{I,q}$ as
$\langle \pi_{I,q}(G_m)v_{I,q}\rangle$.}\hfill $\diamondsuit$
\end{remark}

\section{Spherical Fourier Analysis and the Paley-Wiener Theorem}\label{sec7}

\noindent
In this section we give a short description of
the spherical functions and Fourier analysis on compact
symmetric spaces. Then we state and prove results for
limits of compact symmetric spaces analogous to those
in Section \ref{sec4}.

For the moment let $M=G/K$ be a compact symmetric space. We use
the same notation as in the last section but without the index
$n$. As usual we view functions on $M$ as right $K$--invariant
functions on $G$ via $f(g)=f(g\cdot x_o)$, $x_o=eK$.
For $\mu \in \Lambda (G,K)$ denote by $\deg (\mu )$ the dimension
of the irreducible representation $\pi_\mu$. Fix a unit $K$-fixed
vector $e_\mu$ and define
\[\psi_\mu (g)=( e_\mu ,\pi_\mu (g)e_\mu) \, .\]
Then $\psi_\mu$ is positive definite spherical function
on $G$, and every positive definite spherical function is
obtained in this way for a suitable representation $\pi$. Define
\begin{equation}\label{def-ell}
\ell^2_d(\Lambda^+ (G,K))=
\left \{\{a_\mu \}_{\mu\in\Lambda^+ (G,K)}\left |
a_\mu\in\C\,\,\mathrm{and}\right . \,\,
\sum_{\mu\in\Lambda^+(G,K)}\deg(\mu )|a_\mu |^2<\infty\right \}\, .
\end{equation}
Then $\ell^2_d(\Lambda^+ (G,K))$ is a Hilbert space with
inner product
\[((a(\mu))_\mu ,(b(\mu ))_\mu)=\sum_{\mu\in \Lambda^+(G,K)} \deg(\mu) a(\mu )\overline{b(\mu )}\, .\]
For $f\in C^\infty (M)$ define the spherical Fourier transform of $f$,
$\cS (f)=\widehat{f} :\Lambda^+(G,K)\to \C$ by
\[\widehat{f}(\mu )=(f,\psi_\mu)=\int_M f(g )(\pi_\mu (g)e_\mu ,e_\mu)\, dg
=(\pi_\mu (f)e_\mu ,e_\mu)\]
where $\pi_\mu (f)$ denotes the operator valued Fourier transform of $f$,
$\pi_\mu (f)=\int_G f(g) \pi_\mu (g)\, dg$.
Then the sequence $\cS(f)=(\cS(f)(\mu ))_\mu$ is in $\ell^2_d(\Lambda^+(G,K))$ and
$\|f\|^2=\|\cS (f)\|^2$. Finally, $\cS$ extends by continuity
to an unitary isomorphism
\[\cS : L^2(M)^K\to \ell^2_d(\Lambda^+(G,K))\, .\]
We denote by $\cS_\rho$ the map
\begin{equation}\label{def-Srho}
\cS_\rho (f)(\mu )=\cS(f)(\mu -\rho )\, ,\quad \mu\in \Lambda^+(G,K)+\rho\, .
\end{equation}
If $f$ is smooth, then $f$ is given by
\[f(x)=\sum_{\mu\in\Lambda^+(G,K)}\deg(\mu )\cS (f) (\mu )\psi_\mu (x)=
\sum_{\mu\in\Lambda^+(G,K)} \deg(\mu )\cS_\rho (f)(\mu +\rho )\psi_{\mu}(x)\, .\]
and the series converges in the usual Fr\'echet topology on $C^\infty (M)^K$.
In general, the sum has to be interpreted as an $L^2$ limit.

Let
\[
\Omega:=\{X\in \fa\mid
|\alpha (X)|<\pi/2 \text{ for all } \alpha\in\Sigma\}\, .\]
For $\lambda \in\fa_\C^*$ let $\varphi_\lambda$ denote the
spherical function on the dual symmetric space of noncompact
type $G^d/K$, where the Lie algebra of $G^d$ is given
by $\fk+i\fs$. Then $\varphi_\lambda$ has a
holomorphic extension as $K_\C$--invariant function
to $K_\C\exp (2\Omega )\cdot x_o\subset
G_\C/K_\C$, cf. \cite[Theorem 3.15]{Opd},
\cite{BOP2005} and \cite{KS2005}. Furthermore
\[\overline{\psi_\mu (x)}=\varphi_{\mu +\rho}(x^{-1})\]
for $x\in K_\C\exp (2\Omega )\cdot x_o$.
We can therefore define a holomorphic function $\lambda \mapsto \cS_\rho (f)(\lambda )$
by
\[\cS_\rho (f)(\lambda )=\int_M f(x)\varphi_{\lambda }(x^{-1})\, dx\]
as long as $f$ has support in $K_\C \exp (2\Omega)\cdot x_o$.
$\cS_\rho (f)$ is $W(\fg,\fa )$ invariant and
$\cS_\rho (f)(\mu )=\cS (f)(\mu -\rho )$ for all $\mu \in \Lambda^+(G,K)+\rho$.

Denote by $R$ the injectivity radius of the Riemannian exponential map
$\Exp :\fs \to M$.  As noted in Remark \ref{also-symmetric},
Theorem \ref{rjcrit} holds for compact simply connected Riemannian
symmetric spaces \cite{crit} generally, leading to the following extension of
Theorem \ref{inj-radius}.
\begin{theorem}\label{also-inj-radius}
The injectivity radius $R$ of the classical compact simply connected
Riemannian symmetric spaces $M = G/K$, in the Riemannian metric given by the
inner product $\langle X,Y\rangle = -\trace(XY)$ on $\fs$, depends
only on the type of the restricted reduced root system
$\Sigma_2(\fg_\C,\ga_\C)$.
It is $\sqrt{2}\,\pi$ for $\Sigma_2(\fg_\C,\ga_\C)$ of type $A$ or $C$ and is
$2\pi$ for $\Sigma_2(\fg_\C,\ga_\C)$ of type $B$ or $D$.
\end{theorem}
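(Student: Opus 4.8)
The plan is to rerun the four computations of Section \ref{sec5} with the group replaced by the symmetric space $M = G/K$, using the form of Crittenden's theorem for compact symmetric spaces recorded in Remark \ref{also-symmetric}. First I would write down the symmetric-space analogue of Theorem \ref{rjcrit}: the minimum locus and the first conjugate locus of $M$ coincide and equal $\Ad(K)\gf$, where now
\[
\gf = \{X \in \fa \mid \max_{\alpha \in \Sigma}|\alpha(X)| = 2\pi\}
\]
is formed from the restricted roots $\Sigma = \Sigma(\fg_\C,\fa_\C)$, and $R = \min_{X\in\gf}\|X\|$ in the metric given by $-\trace(XY)$ on $\fs$. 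The first observation is that $\max_{\alpha\in\Sigma}|\alpha(X)| = \max_{\alpha\in\Sigma_2}|\alpha(X)|$: whenever both $\alpha$ and $2\alpha$ lie in $\Sigma$ the larger value is attained at $2\alpha\in\Sigma_2$. Hence $\gf$, and therefore $R$, depend only on the reduced system $\Sigma_2 = \Sigma_2(\fg_\C,\fa_\C)$; this is already the reason that only the type of $\Sigma_2$ can occur in the answer.

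Next I would reduce $R$ to a minimum of coroot lengths exactly as in the passage leading to (\ref{re-inj-rad}). For a fixed $\alpha\in\Sigma_2$ the shortest $X\in\fa$ with $|\alpha(X)| = 2\pi$ is the multiple of the coroot $t_\alpha$ (normalized by $\alpha(t_\alpha) = 2$, $t_\alpha\in\fa$) of length $\pi\|t_\alpha\|$; minimizing over $\alpha$ and noting that the minimizing point may be taken in the direction of a simple coroot, where the remaining roots automatically satisfy $|\beta(X)|\le 2\pi$, I obtain
\[
R = \min_{1\le j\le r}\pi\|t_{\alpha_j}\|,
\]
where $\alpha_1,\dots,\alpha_r$ are the simple roots of $\Sigma_2$ and $t_{\alpha_j}\in\fa$ their coroots. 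Everything is thus reduced to the trace-form lengths of the simple coroots of $\Sigma_2$.

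Finally I would compute those lengths case by case, using the matrix realizations of $\fa$ and of the coordinate functionals $f_i$ from Section \ref{sec2} together with the compact normalization $\langle X,Y\rangle = -\trace(XY)$, precisely as in the four displayed computations preceding Theorem \ref{inj-radius}. In the coordinates $f_i$ the simple coroots of $\Sigma_2$ have the standard shapes --- $f_i - f_{i-1}$ in type $A$; $2f_1$ together with the $f_i - f_{i-1}$ in type $B$; $f_1$ together with the $f_i - f_{i-1}$ in type $C$; and $f_1 + f_2$ together with the $f_i - f_{i-1}$ in type $D$ --- and the minimum of their lengths comes out to be $\sqrt{2}$ in types $A$ and $C$ and $2$ in types $B$ and $D$, giving the asserted $R = \sqrt2\,\pi$ and $R = 2\pi$. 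The step that needs genuine care, and which I expect to be the main obstacle, is the uniformity claim: that these coroot lengths depend only on the type of $\Sigma_2$ and not on the particular space $M$ in Table \ref{symmetric-case-class}, in particular not on the root multiplicities. This I would verify by checking that in each compact realization the generator attached to $f_i$ contributes the same value of $-\trace$ of its square --- namely $1$ for the type-$A$ reduced systems and $2$ for types $B$, $C$, $D$ --- independently of the rank and of the multiplicities, so that the four computations reproduce exactly those of Section \ref{sec5}. Once this normalization is established, the case-by-case evaluation is identical to the group case and the theorem follows.
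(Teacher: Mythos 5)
Your outline follows exactly the route the paper itself takes --- the paper in fact gives no more detail than you do, simply citing Remark \ref{also-symmetric} and pointing back to the computations of Section \ref{sec5} --- but the two steps you would need to make it work do not go through, and the second is precisely the one you flagged as the main obstacle. First, the constant $2\pi$ of Theorem \ref{rjcrit} cannot be transplanted verbatim to $M=G/K$: for a bi-invariant metric the curvature operator along $\exp(tX)$ is $-\tfrac14(\ad X)^2$, so Jacobi fields vanish where $|\alpha(tX)|\in 2\pi\Z$, whereas for a symmetric space it is $-(\ad X)^2|_{\fs}$, so in the paper's normalization of the restricted roots the tangential first conjugate locus is $\gf=\{X\in\fa\mid \max_{\alpha\in\Sigma}|\alpha(X)|=\pi\}$, not $2\pi$. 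One can test this on $\SU(3)/\SO(3)$: for $X=i\,\diag(a,-a,0)$ one has $\|X\|=a\sqrt2$ and $\exp((\pi/a)X)=\diag(-1,-1,1)\in \SO(3)$, so the geodesic is a loop of length $\sqrt2\,\pi$ and its cut point occurs by distance $\pi/\sqrt2$, not $\sqrt2\,\pi$.

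Second, and fatally, the uniformity you propose to verify is false: the trace-form lengths of the simple restricted coroots depend on how $\fa$ sits in the defining representation, i.e.\ on the restricted weights and their multiplicities, and not only on the type of $\Sigma_2$. Both $\SU(n)/\SO(n)$ and $\SU(2n)/\Sp(n)$ have $\Sigma_2$ of type $A$, but in the first case $\fa=\{i\,\diag(x_1,\dots,x_n)\mid \sum x_j=0\}$ with $\|X\|^2=\sum x_j^2$, while in the second each $x_j$ occupies two diagonal entries and $\|X\|^2=2\sum x_j^2$; the resulting injectivity radii, $\pi/\sqrt2$ and $\pi$, differ by $\sqrt2$, and neither equals the asserted value $\sqrt2\,\pi$. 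So the case-by-case check you defer to the end would come out negative, and no argument can establish that $R$ depends only on the type of $\Sigma_2$ in the metric $-\trace(XY)$, because that is not true. What your reduction $R=\min_j \pi\|t_{\alpha_j}\|$ (with the constant corrected as above) does prove, and what the remainder of Section \ref{sec7} actually uses, is that $R$ is \emph{constant along a propagation sequence} $\{M_n\}$: there $\fa_n\hookrightarrow\fa_k$ is realized by adjoining zero coordinates in the same matrix model, so the simple coroots of $\Sigma_{2,n}$ and their trace-form lengths are literally unchanged in passing from $M_n$ to $M_k$. I would reformulate the statement that way and prove that instead.
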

\begin{remark}{\em
Since $\Omega$ is given by $|\alpha(X)| < \pi/2$ and the interior of the
injectivity radius disk is given by $|\alpha(X)| < 2\pi$ the set $\Omega$
is contained in the open disk in $\gs$ of center $0$ and radius $R/4$.}
\hfill $\diamondsuit$
\end{remark}

Essentially as before, $\Br$ denotes the closed
metric ball in $M$ with center $x_o$ and radius $r$, and
$C^\infty_r(M)^K$ denotes
the space of $K$--invariant smooth functions on $M$ supported in $\Br$.

\begin{remark}\label{explain}{\rm
Theorem \ref{t: PW} below is, modulo a $\rho$-shift and $W$-invariance,
Theorem 4.2 and Remark 4.3 of \cite{OS}. As pointed
out in \cite[Remark 4.3]{OS}, the known value for the
constant $S$ can be different in each part of the theorem.
In Theorem\ref{t: PW}(1) we need that $S<R$ and the closed ball in $\fs$ with
center zero and radius $S$ has to be contained in
$K_\C\exp (i\Omega )\cdot x_o$
to be able to use the estimates from \cite{Opd} for the spherical
functions to show that we actually end up in the Paley-Wiener space.

In Theorem \ref{t: PW}(2) we need only that $S<R$. Thus the constant in
(1) is smaller than the one in (2). That is used in part (3).
For Theorem \ref{t: PW}(4) we also need $\|X\|\leqq \pi/\|\xi_j\|$ for
$j=1,\ldots ,r$.} \hfill $\diamondsuit$
\end{remark}

The group $\widetilde{W}=\widetilde{W}(\fg,\fh)$
is defined as before
and $C_{r,\widetilde{W}}^\infty(M)^K$ denotes the space of smooth $K$--invariant functions
with support in $B_r$ such that $f|_A$ is $\widetilde{W}$--invariant.

\begin{theorem}[Paley-Wiener Theorem for Compact Symmetric Spaces]
\label{t: PW} Let the notation be as above. Then the following
hold.
\begin{enumerate}
\item[1.] There exists a constant $S>0$ such that,
for each $0<r<S$ and  $f\in C^\infty_{r, \widetilde W}(M)^{K}$, the
$\rho$-shifted spherical  Fourier transform $\cS_\rho (f) : \Lambda^+_n+\rho \to\C$
extends to a function in $\PW_r(\ga_\C)^{\widetilde W}$.
\item[2.]There exists a constant $S>0$ such that if $F\in\PW_r(\ga_\C)^{\widetilde W}$, $0<r<S$,
the function
\begin{equation}\label{defOfIrho}
f(x):=\sum_{\mu\in\Lambda^+ (G,K)} \deg (\mu ) F (\mu + \rho )\psi_{\mu} (x)
\end{equation}
is in $C^\infty_{r,\widetilde W}(M)^K$ and
$\cS_\rho {f}(\mu )=F (\mu )$.
\item[3.] For $S$ as in $(1.)$ define $\cI_\rho :
\PW_r(\ga_\C)^{\widetilde W}\to C^\infty_{r,\widetilde W}(M)^K$ by {\rm (\ref{defOfIrho})}.
Then  $\cI_\rho$ is surjective for all $0<r<S$.
\item[4.] There exists a constant $S>0$ such that
for all $0<r<S$ the map $\cS_\rho$ followed by holomorphic extension
defines a bijection $C_{r,\widetilde {W}}(M)^{K}\cong \PW_r(\ga_\C )^{\widetilde W}$.
\end{enumerate} \end{theorem}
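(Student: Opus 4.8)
The plan is to deduce all four statements from the Paley--Wiener theorem of \'Olafsson and Schlichtkrull \cite{OS} (Theorem 4.2 and Remark 4.3), which gives the corresponding assertions with the ordinary restricted Weyl group $W=W(\fg,\fa)$ in place of $\widetilde{W}$. Since $\widetilde{W}=W$ except when some irreducible de Rham factor of $M$ has $\Sigma_{1/2}(\fg,\fa)$ of type $D$, and since the spherical functions $\psi_\mu$, the dimensions $\deg(\mu)$, and the Paley--Wiener spaces all factor as products over the irreducible factors, the only genuinely new content is a single irreducible factor with $\Sigma_{1/2}(\fg,\fa)$ of type $D$. By the discussion preceding Theorem \ref{th-IhIa}, such a factor is, up to covering, either the group $\SO(2j)$ realized as $(\SO(2j)\times\SO(2j))/\diag\,\SO(2j)$, or the space $\SO(2j)/(\SO(j)\times\SO(j))$; in both cases $\Sigma(\fg,\fa)$ is of type $D$ and $\widetilde{W}=W\cup\sigma W$ with $\sigma$ a single odd sign change.

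The key step is to establish, for these type $D$ factors, the transformation law $\psi_\mu(\widetilde{\sigma}(x))=\psi_{\sigma'(\mu)}(x)$, where $\widetilde{\sigma}$ is the automorphism of $M$ induced by the Dynkin diagram automorphism $\sigma$ (geometrically, the extension of $\SO$ to $\mathrm{O}$, i.e.\ conjugation by $\diag(1,\ldots,1,-1)$). I would obtain it from the noncompact analogue already in hand: applying Lemma \ref{le-AutDynkDiagrG} and Theorem \ref{th-SphericalFctTildeWInv} to the split noncompact dual produces $\varphi_\lambda(\widetilde{\sigma}(x))=\varphi_{\sigma'(\lambda)}(x)$ for the dual spherical functions, exactly as in the proof of Theorem \ref{th-IsomorphismNonCompact1}. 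Transferring this through the Cartan duality $\overline{\psi_\mu(x)}=\varphi_{\mu+\rho}(x^{-1})$ on $K_\C\exp(2\Omega)\cdot x_o$ and its holomorphic continuation yields the displayed law, and hence the crucial equivalence: for $f\in C^\infty_r(M)^K$ with $\cS_\rho(f)$ already known to lie in $\PW_r(\fa_\C)^W$, the restriction $f|_A$ is $\widetilde{W}$--invariant if and only if $\cS_\rho(f)$ is $\widetilde{W}$--invariant.

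With this equivalence the four parts follow quickly. For (1), given $f\in C^\infty_{r,\widetilde{W}}(M)^K$, \cite{OS} gives $\cS_\rho(f)\in\PW_r(\fa_\C)^W$, and the $\widetilde{W}$--invariance of $f|_A$ promotes this to $\PW_r(\fa_\C)^{\widetilde{W}}$. For (2), any $F\in\PW_r(\fa_\C)^{\widetilde{W}}\subseteq\PW_r(\fa_\C)^W$ is carried by \cite{OS} to a function $f=\cI_\rho(F)\in C^\infty_r(M)^K$ with $\cS_\rho f=F$; the $\widetilde{W}$--invariance of $F$ together with the transformation law forces $f|_A$ to be $\widetilde{W}$--invariant, so $f\in C^\infty_{r,\widetilde{W}}(M)^K$. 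Part (3) is exactly this surjectivity, and part (4) combines (1) and (2): $\cS_\rho$ is injective because the spherical Fourier transform $\cS$ is injective on $C^\infty(M)^K$, its image lands in $\PW_r(\fa_\C)^{\widetilde{W}}$ by (1), and it is onto by (3). The constant $S$ is the minimum of the constants supplied by (1) and (2), as in Remark \ref{explain}, with $r$ restricted (following \cite{OS}) so that the holomorphic extension is unique, which is what makes (4) a genuine bijection.

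The main obstacle is the transformation law in the compact setting. The delicate points are checking that the geometric automorphism $\widetilde{\sigma}$ preserves $K$, so that it descends to $M$ and acts on $\Lambda^+(G,K)+\rho$ through $\sigma'$, and that passing through Cartan duality and the holomorphic continuation of $\varphi_\lambda$ off $K_\C\exp(2\Omega)\cdot x_o$ genuinely implements the odd sign change. Once the noncompact identity of Theorem \ref{th-SphericalFctTildeWInv} is secured, the remaining arguments are bookkeeping around \cite{OS}.
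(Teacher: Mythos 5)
Your proposal is correct in outline and its skeleton coincides with the paper's: everything is delegated to Theorem 4.2 and Remark 4.3 of \cite{OS} except the $\widetilde{W}$--invariance, which only needs to be checked for irreducible factors with restricted root system of type $D$, i.e.\ (up to covering) the group manifold $\SO(2j)$ and $\SO(2j)/\SO(j)\times\SO(j)$. Where you diverge is in the mechanism for that last step. The paper disposes of it in one sentence by appealing to Weyl's character formula ``as in the proof of Theorem \ref{t: Gonzalez}''; that is immediate for the group manifold, where $\psi_\mu = \chi_\mu/\deg(\mu)$, but is rather terse for the Grassmannian factor, whose spherical functions are not characters. You instead establish the transformation law $\psi_\mu(\widetilde{\sigma}(x))=\psi_{\sigma'(\mu)}(x)$ by transporting the noncompact identity of Theorem \ref{th-SphericalFctTildeWInv} through the duality relation $\overline{\psi_\mu(x)}=\varphi_{\mu+\rho}(x^{-1})$ on $K_\C\exp(2\Omega)\cdot x_o$ and its holomorphic continuation. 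This buys a uniform argument that treats both type $D$ cases on the same footing and makes the equivalence between $\widetilde{W}$--invariance of $f|_A$ and of $\cS_\rho(f)$ completely explicit; the paper's route is shorter but leans on the reader to supply the spherical analogue of the character formula for the Grassmannian.

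One small imprecision to fix: you say you apply Lemma \ref{le-AutDynkDiagrG} and Theorem \ref{th-SphericalFctTildeWInv} ``to the split noncompact dual.'' That covers $\SO_o(j,j)/\SO(j)\times\SO(j)$, but the dual of the group manifold is $\SO(2j,\C)/\SO(2j)$, which is not split ($\fa$ is not a Cartan subalgebra of the realification), so Theorem \ref{th-SphericalFctTildeWInv} does not apply there directly; for that factor you need the direct computation with the explicit complex-group spherical function formula, exactly as the paper does in the proof of Theorem \ref{th-IsomorphismNonCompact1}. Since you cite that proof as your template, this is a gap in phrasing rather than in substance, but it should be stated. The rest of your bookkeeping --- $\sigma'(\rho)=\rho$, $\deg(\sigma'\mu)=\deg(\mu)$, $\sigma'$ permuting $\Lambda^+(G,K)$, $\widetilde{\sigma}$ preserving $K$, $A$, the metric and hence the balls $B_r$, and the choice of $S$ as in Remark \ref{explain} --- is sound.
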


\begin{proof} As mentioned above this is Theorem 4.2 and Remark 4.3 in
\cite{OS} except for the $\widetilde W$-invariance. But that has only be checked
for factors of type $D_n$, where it follows as in the proof of Theorem
\ref{t: Gonzalez} by Weyl's character formula.
\end{proof}

A weaker version of the following theorem was used in  \cite[Section 11]{OS}.
It used an operator $Q$ which we will define shortly, and
some differentiation,
to prove the surjectivity part of local Paley--Wiener Theorem.
Denote the Fourier transform of $f\in C(G)^G$ by
$\cF (f)$. Recall the
operator $T:\PW_r^\rho(\fh_\C^*)^{\widetilde{W}(\fg,\fh)}\to
\PW_r(\fh_\C^*)^{\widetilde{W}(\fg,\fh)}$ from Theorem \ref{isoPWspaces}. Finally,
for $f\in C (G)$ let $f^\vee (x)=f(x^{-1})$. Then
${}^\vee : C_{r,\widetilde{W}(\fg,\fh)}^\infty (G)^G\to  C_{r,\widetilde{W}(\fg,\fh)}^\infty (G)^G$
is a bijection. We will identify $\fa_\C^*$ with the subspace
$\{\lambda\in\fh_\C^*\mid \lambda|_{\fh_\C\cap \fk_\C}=0\}$ without comment
in the following.

\begin{theorem} \label{stronger}
Let $S>0$ be as in {\rm Theorem \ref{t: PW}(1)}
and let $0<r<S$. Then the the
restriction map $\PW_r(\fh^*_\C)^{\widetilde{W}(\fg,\fh)}\to
\PW_r(\fa_\C^*)^{\widetilde{W}(\fg,\fa)}$ is surjective. Furthermore, the map
$C_{r,\widetilde W (\fg,\fh)}^\infty (G )^G\to
C^\infty_{r,\widetilde W (\fg,\fa)}(M)^K$, given by
\[Q(\varphi )(g\cdot x_o )=\int_K \varphi (gk)\, dk,\]
is surjective, and $\cS_\rho \circ Q(f^\vee )= T\circ \cF (f)$ on
$\Lambda^+(G,K)+\rho$.
\end{theorem}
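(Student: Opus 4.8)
The plan is to establish the three assertions in turn, deducing the surjectivity of $Q$ last from the restriction statement together with the Fourier-analytic identity. For the surjectivity of $\PW_r(\fh_\C^*)^{\widetilde W(\fg,\fh)}\to\PW_r(\fa_\C^*)^{\widetilde W(\fg,\fa)}$ I would apply Theorem~\ref{th-IsoPW1} with $F=\fh_\R$ and $E=\fa$, using the inner product to identify $\fa_\C^*$ with the subspace of $\fh_\C^*$ vanishing on $\fh_\C\cap\fk_\C$. The two hypotheses of that theorem are supplied verbatim by Theorem~\ref{th-IhIa}: hypothesis (1) is $\widetilde W_\fa(\fg,\fh)|_\fa=\widetilde W(\fg,\fa)$, and hypothesis (2) is the surjectivity of $I_{\widetilde W(\fg,\fh)}(\fh_\R)\to I_{\widetilde W(\fg,\fa)}(\fa)$. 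Even in the type $D$ situation the extended groups $\widetilde W$ are reflection groups (one replaces the type $D$ group by the corresponding type $B$/$C$ group), so the standing hypotheses of Section~\ref{sec1} apply, and the first assertion follows at once.

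The identity $\cS_\rho\circ Q(f^\vee)=T\circ\cF(f)$ is the technical core, and I would prove it by computing the left-hand side pointwise. Let $P_\nu=\int_K\pi_\nu(k)\,dk$ be the orthogonal projection of $V_\nu$ onto $V_\nu^{K}$; by the Cartan--Helgason Theorem~\ref{t-CH} this space is one-dimensional and spanned by $e_\nu$ whenever $\nu\in\Lambda^+(G,K)$. A change of variables in the two Haar integrals defining $\pi_\nu(Q(f^\vee))$ collapses it to $\pi_\nu(f^\vee)P_\nu$. Since $f^\vee$ is again central, Schur's Lemma forces $\pi_\nu(f^\vee)$ to be the scalar $\widehat f(\nu)/\deg(\nu)$, where $\widehat f=\cF(f)$ is the group transform; pairing with $e_\nu$ then yields $\cS(Q(f^\vee))(\nu)=\widehat f(\nu)/\deg(\nu)$, and after the $\rho$-shift $\cS_\rho(Q(f^\vee))(\mu)=\widehat f(\mu-\rho)/\deg(\mu-\rho)$ for $\mu\in\Lambda^+(G,K)+\rho$. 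The Weyl dimension formula rewrites $1/\deg(\mu-\rho)$ in terms of $\varpi$, which is exactly the prefactor built into the operator $T$ of Lemma~\ref{isoPWspaces}.

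The matching of this last expression with $T\circ\cF(f)$ is the step I expect to be the main obstacle. When $\fa=\fh$ the two normalizations coincide and $T(\cF f)(\mu)=\tfrac{\varpi(\rho)}{\varpi(\mu)}\widehat f(\mu-\rho)$ is visibly equal to $\widehat f(\mu-\rho)/\deg(\mu-\rho)$. In general $\fa\subsetneq\fh$, the half-sum $\rho$ for $M$ is the restriction to $\fa$ of the half-sum for $\fg$, and the roots of $\fg$ that vanish on $\fa$ make the Weyl denominator $\varpi$ vanish identically on $\fa_\C^*$; thus the defining quotient for $T$ is of the form $0/0$ along $\fa_\C^*$ and cannot be evaluated naively. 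Here I would use that $T(\cF f)$ is a genuine holomorphic function on $\fh_\C^*$ (the whole point of Lemma~\ref{isoPWspaces}) together with the vanishing supplied by Lemma~\ref{le-PhZero}: the factors of $\varpi$ coming from roots vanishing on $\fa$ cancel against the corresponding zeros of $\Phi_f(\,\cdot-\rho)$, and the surviving factors reassemble into precisely the restricted-root data, reproducing $\widehat f(\mu-\rho)/\deg(\mu-\rho)$. Carrying out this cancellation carefully, tracking orders of vanishing along the $\fm$-root hyperplanes, is the one place where real work is needed.

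Finally, the surjectivity of $Q$ follows by a diagram chase once $0<r<S$, the vertical maps being isomorphisms: $\cF$ followed by holomorphic extension is a bijection onto $\PW_r^\rho(\fh_\C^*)^{\widetilde W(\fg,\fh)}$ by Gonzalez' Theorem~\ref{t: Gonzalez}, $T$ is a bijection by Lemma~\ref{isoPWspaces}, and $\cS_\rho$ is a bijection onto $\PW_r(\fa_\C)^{\widetilde W(\fg,\fa)}$ by Theorem~\ref{t: PW}(4). Given a target $h\in C^\infty_{r,\widetilde W(\fg,\fa)}(M)^K$, I would lift $\cS_\rho(h)$ through the surjective restriction of the first claim to some $\Psi\in\PW_r(\fh_\C^*)^{\widetilde W(\fg,\fh)}$, set $\varphi=\cF^{-1}(T^{-1}\Psi)\in C^\infty_{r,\widetilde W(\fg,\fh)}(G)^G$, and invoke the identity to obtain $\cS_\rho(Q(\varphi^\vee))=\Psi|_{\fa_\C^*}=\cS_\rho(h)$ on $\Lambda^+(G,K)+\rho$; injectivity of the spherical transform then gives $Q(\varphi^\vee)=h$, and since $f\mapsto f^\vee$ is a bijection of $C^\infty_{r,\widetilde W(\fg,\fh)}(G)^G$, the map $Q$ itself is surjective.
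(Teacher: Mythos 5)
Your proposal is, in all essentials, the paper's own proof. The first assertion is obtained exactly as you say, from Theorem \ref{th-IsoPW1} with hypotheses supplied by Theorem \ref{th-IhIa}; your Schur--lemma computation of $\cS(Q(f^\vee))(\mu)=\cF(f)(\mu)/\deg(\mu)$ is equivalent to the paper's observation that $Q(\chi_\mu^\vee)=\psi_\mu$ for $\mu\in\Lambda^+(G,K)$ and $Q(\chi_\mu^\vee)=0$ otherwise; and the closing diagram chase for the surjectivity of $Q$ is identical (the paper concludes $Q(F^\vee)=f$ from equality of the spherical Fourier coefficients of two smooth functions, which is your injectivity argument).

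The one place you diverge is the ``main obstacle'' you flag, and it is not actually there. The identity $\cS_\rho\circ Q(f^\vee)=T\circ\cF(f)$ is asserted only on the discrete set $\Lambda^+(G,K)+\rho$, not as an identity of holomorphic functions on $\fa_\C^*$. At a point $\mu+\rho$ with $\mu\in\Lambda^+(G,K)$ the argument of $\varpi$ is strictly dominant, hence regular, so $\varpi(\mu+\rho)\neq 0$ and $T(\cF f)(\mu+\rho)$ is the naive quotient $\varpi(\rho)\Phi_f(\mu)/\varpi(\mu+\rho)=\cF(f)(\mu)/\deg(\mu)$ by the Weyl dimension formula --- exactly the value you had already computed for the left-hand side one sentence earlier. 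No restriction of $T(\cF f)$ to the hyperplanes where $\varpi$ vanishes is ever taken, so the order-of-vanishing analysis via Lemma \ref{le-PhZero} is not needed; Lemma \ref{isoPWspaces} enters only to guarantee that $T(\cF f)$ lies in $\PW_r(\fh_\C^*)^{\widetilde{W}(\fg,\fh)}$ so that the first assertion and Theorem \ref{t: PW} can be applied in the surjectivity step. (The real subtlety your worry brushes against is notational: $\rho$ means the half sum for $(\fg,\fh)$ on the $T\circ\cF$ side and the restricted half sum on the $\cS_\rho$ side; both the paper's argument and yours only ever evaluate at the shifted dominant weights, where this causes no difficulty.) With that observation your argument closes and coincides with the paper's.
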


\begin{proof} Surjectivity of the restriction map follows from
Theorem \ref{th-IsoPW1} and Theorem \ref{th-IhIa}.
Next, we have $Q (\chi_\mu^\vee )(x)=\int_K \chi_\mu (x^{-1}k)\, dk$.
As $\int_K \pi_\mu (k)\, dk$ is the
orthogonal projection onto $V_\mu^K$ it follows that $Q (\chi_\mu^\vee)=0$ if $\mu\not\in \Lambda^+(G,K)$ and
\[Q(\chi_\mu^\vee) (x)  =
(\pi_\mu (x^{-1})e_\mu,e_\mu)=(e_\mu ,\pi_\mu (x)e_\mu )=\psi_\mu (x)\]
for $\mu \in \Lambda^+(G,K)$. Thus, if
$f =\sum_{\mu} \cF (f)(\mu )\chi_\mu $ we have
$$
Q(f^\vee ) (x)= \sum_{\mu\in \Lambda^+(G,K)}\cF (f)(\mu )\psi_{\mu }(x)
=\sum_{\mu\in \Lambda^+(G,K)}\deg(\mu)\frac{\cF (f)(\mu )}{\deg(\mu)} \psi_{\mu }(x).
$$
Using the Weyl dimension formula for finite dimensional representations,
$\deg(\mu )=\frac{\varpi (\mu +\rho)}{\varpi (\rho )}$, we get
\[\cS_\rho (Q(f^\vee ))(\mu +\rho)=
\tfrac{\varpi (\mu +\rho)}{\varpi (\rho )}\, \cF (f)(\mu  )
=T(\cF (f))|_\fa (\mu +\rho)\]
for $\mu \in\Lambda^+(G,K)$. Hence $\cS_\rho\circ Q(f^\vee)|_{\Lambda^+(G,K)}
= (T\circ \cF (f)|_{\fa_\C})|_{\Lambda^+(G,K)}$.

Assume that $f\in C^\infty_{r,\widetilde{W}(\fg,\fa)}(G/K)^K$. Then, by
the Paley-Wiener Theorem, Theorem \ref{t: PW}, there
exists a $\Phi\in\PW_r(\fa_\C^*)^{\widetilde{W}(\fg,\fa)}$ such that
$\Phi =\cS_\rho (f)$ on $\Lambda^+(G,K)$.
Then, by what we just proved, there exists $\Psi\in \PW_r(\fh_\C^*)^{\widetilde{W}(\fg,\fh)}$
such that $\Psi|_{\fa_\C}=\Phi$.
By  Theorem \ref{t: Gonzalez} there
exists $F\in C_{r,\widetilde{W}(\fg,\fh)}(G)^G$ such that
$T\circ \cF (F)= \Psi$. By the above calculation we have
\[\cS (f) (\mu) =\cS (Q(F^\vee )) (\mu )\quad \text{for all}\quad \mu\in\Lambda^+(G,K)\, .\]
As clearly $Q (F^\vee)$ is smooth, it follows that $Q(F^\vee )=f$ and
hence $Q$ is surjective.
\end{proof}

Let $\sigma = 2(\alpha_1+\ldots +\alpha_\ell)$ where the
$\alpha_j\in \Sigma_2^+(\fg_\C,\fa_\C)$ are the simple roots.
For $M$ irreducible let
\begin{equation}\label{eq-Omega1+2}
\begin{aligned}
\Omega^*:=\,\, &\Omega \text{ if $\Sigma_2(\fg_\C,\fa_\C)$ is of type
$A_\ell$ or $C_\ell$}, \\
\Omega^*:= &\bigcap_{w\in W(\fg,\fa )}\{X\in\fa\mid |\sigma (w(X))| < \pi/2\}
\text{ if $\Sigma_2(\fg_\C,\fa_\C)$ is of type $B_\ell$ or $D_\ell$}.
\end{aligned}
\end{equation}
In general, we define $\Omega^*$ to be the product of the $\Omega^*$'s for all
the irreducible factors. Then $\Omega^*$ is a convex Weyl group invariant
polygon in $\fa$.  We also have $\Omega^* = -\Omega^*$. This is easy to
check and in any case will follow from our explicit description of $\Omega^*$.

Using the explicit realization of the irreducible root systems in
Section \ref{sec2} we describe the domains $\Omega^*$ in the following way:

\noindent
$\mathbf{A_n}$\textbf{:} We have $\fa =\{x\in\R^{n+1}\mid \sum x_j=0\}$,
$n\geqq 1$, and the roots are the $f_i-f_j: x\mapsto x_i-x_j$ for
$i\not= j$. Hence
\begin{equation}\label{eq-OmegaAn}\Omega^*=
    \Omega=\left \{x\in \R^{n+1}\left | \sum x_j=0\,\,\right .
\text{ and } |x_i-x_j| < \tfrac{\pi}{2} \text{ for }
1\leqq i\not= j\leqq n+1\right \}\, .
\end{equation}

\noindent
$\mathbf{B_n}$\textbf{:} We have $\fa=\R^n$, $n\geqq 2$ and $\sigma =
2(f_1+(f_2-f_1)+\ldots + (f_n-f_{n-1}))=2f_n$. The Weyl
group consists of all permutations and sign changes on the $f_i$. Hence
\begin{equation}\label{eq-OmegaBn}
\Omega^*=\{x\in\R^n\mid  |x_j| < \tfrac{\pi}{4} \text{ for } j=1,\ldots ,n\}\, .
\end{equation}

\noindent
$\mathbf{C_n}$\textbf{:} Again $\fa=\R^n$, $n\geqq 3$, and the roots are the
$\pm (f_i\pm f_j)$ and $\pm 2f_j$. If $|x_i|,|x_j|<\pi /4$ then
$|x_i\pm x_j|<\pi /2$. Hence
\begin{equation}\label{eq-OmegaCn}\Omega^*=\Omega=
\{x\in\R^n\mid |x_j|<\tfrac{\pi}{4} \text{ for } j=1,\ldots ,n\}\, .
\end{equation}

\noindent
$\mathbf{D_n}$\textbf{:} Also in this case $\fa=\R^n$ with $n\geqq 4$. We have
$\sigma = 2(f_1+f_2 + (f_2-f_1)+\ldots + (f_n-f_{n-1}))=2(f_2+f_n)$. As the
Weyl group is given by all permutations and even sign changes on the $f_i$,
we get
\begin{equation}\label{eq-OmegaDn}\Omega^*=
\{x\in\R^n \mid |x_i\pm x_j| < \tfrac{\pi}{4} \text{ for }
i,j=1,\ldots ,n\, ,\,\, i\not= j\}.
\end{equation}

\begin{lemma} We have $\Omega^*\subseteqq \Omega$.
\end{lemma}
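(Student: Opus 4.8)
The plan is to prove the inclusion by first reducing to the irreducible case and then checking each of the four classical types against the explicit descriptions of $\Omega^*$ obtained just above in (\ref{eq-OmegaAn})--(\ref{eq-OmegaDn}).

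First I would observe that both sets factor as products over the irreducible factors of $M$. The set $\Omega^*$ is \emph{defined} that way, and $\Omega = \{X \in \fa \mid |\alpha(X)| < \pi/2 \text{ for all } \alpha \in \Sigma\}$ likewise splits as a product, because $\fa$ and $\Sigma$ decompose orthogonally along the irreducible factors (a root belonging to the $i$-th factor evaluates only on the $i$-th summand of $X$). Hence it suffices to verify $\Omega^* \subseteq \Omega$ when $\Sigma_2(\fg_\C,\fa_\C)$ is irreducible, and I may treat the four types $A_n$, $B_n$, $C_n$, $D_n$ separately.

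In types $A_n$ and $C_n$ there is nothing to do: the displays (\ref{eq-OmegaAn}) and (\ref{eq-OmegaCn}) record $\Omega^* = \Omega$ outright, so the inclusion holds with equality. In type $D_n$ the restricted root system is $\Sigma = \{\pm(f_i \pm f_j)\}$, so $\Omega$ is cut out by the conditions $|x_i \pm x_j| < \pi/2$ for $i \neq j$, while by (\ref{eq-OmegaDn}) the set $\Omega^*$ is cut out by the strictly stronger conditions $|x_i \pm x_j| < \pi/4$; since $\pi/4 < \pi/2$, the inclusion is immediate. The only type where a genuine (if elementary) estimate enters is $B_n$: here $\Sigma = \{\pm f_j\} \cup \{\pm(f_i \pm f_j)\}$, so $\Omega = \{x \mid |x_j| < \pi/2 \text{ and } |x_i \pm x_j| < \pi/2\}$, whereas (\ref{eq-OmegaBn}) gives $\Omega^* = \{x \mid |x_j| < \pi/4\}$. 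For $x \in \Omega^*$ the short-root conditions hold since $|x_j| < \pi/4 < \pi/2$, and the long-root conditions follow from the triangle inequality $|x_i \pm x_j| \leqq |x_i| + |x_j| < \pi/4 + \pi/4 = \pi/2$; thus $x \in \Omega$.

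I do not expect any real obstacle, since the statement is essentially a direct consequence of the explicit formulas for $\Omega^*$ together with the standard lists of restricted roots. The only point requiring an actual line of argument is the triangle-inequality step in type $B_n$, and even there the factor-of-two gap between the $\pi/4$ defining $\Omega^*$ and the $\pi/2$ defining $\Omega$ makes the bound comfortable. Assembling the four cases and invoking the product reduction completes the proof.
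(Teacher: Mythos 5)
Your argument is correct, but it takes a different route from the paper's. The paper's proof is uniform across types: it rewrites $\Omega$ as $W(\fa)\bigl(\{X\in\overline{\fa^+}\mid \delta(X)<\pi/2\}\bigr)$, where $\delta$ is the highest root, and then observes that since every coefficient of a simple root in $\delta$ is $1$ or $2$, one has $\delta(X)\leqq\sigma(X)$ for $X$ in the closed positive chamber, so the defining condition $|\sigma(wX)|<\pi/2$ for all $w$ in (\ref{eq-Omega1+2}) forces $X\in\Omega$. You instead verify the inclusion type by type from the explicit coordinate descriptions (\ref{eq-OmegaAn})--(\ref{eq-OmegaDn}) and the standard lists of restricted roots, with the only nontrivial step being the triangle inequality $|x_i\pm x_j|\leqq |x_i|+|x_j|<\pi/2$ in type $B_n$. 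Both are complete proofs. The paper's version is shorter and isolates the structural reason the lemma holds (the bound $2$ on the highest-root coefficients, which is exactly the point of the diagrams in Remark \ref{leftend}); your version is more elementary and self-contained, at the cost of repeating the case analysis already used to derive (\ref{eq-OmegaAn})--(\ref{eq-OmegaDn}). One point worth making explicit in your write-up: when $\Sigma_2(\fg_\C,\fa_\C)$ is of type $B$ or $D$ the full restricted root system $\Sigma$ is automatically reduced (a nonreduced $\Sigma$ of type $BC$ has $\Sigma_2$ of type $C$, where $\Omega^*=\Omega$ by definition), so your use of the reduced root lists to describe $\Omega$ in those two cases is legitimate, but it deserves a sentence.
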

\begin{proof}

Let $\delta$ be the highest root in $\Sigma^+$. Then
\[\Omega =W(\fa)(\{X\in \overline{\fa^+} \mid \delta (X) < \pi/2\})\, .\]
For the classical Lie algebras, the coefficients
of the simple roots in the highest root are all $1$ or $2$.  Hence
$\Omega^*\subseteqq \Omega$ and the claim follows.
\end{proof}

\begin{remark} \label{leftend} {\rm  The distinction between $\Omega$
and $\Omega^*$ is caused by change in the coefficient in the highest
root of the simple root on the left.  Thus in cases $B_n $ and $D_n$
it goes from $1$ to $2$ as we move up in the rank of $M$:
\begin{eqnarray*}
B_\ell &:&
\setlength{\unitlength}{.5 mm}
\begin{picture}(100,10)
\put(3,2){\circle{2}}
\put(0,5){$1$}
\put(4,2){\line(1,0){23}}
\put(28,2){\circle{2}}
\put(25,5){$2$}
\put(29,2){\line(1,0){13}}
\put(48,2){\circle*{1}}
\put(51,2){\circle*{1}}
\put(54,2){\circle*{1}}
\put(59,2){\line(1,0){13}}
\put(73,2){\circle{2}}
\put(70,5){$2$}
\put(74,2.5){\line(1,0){23}}
\put(74,1.5){\line(1,0){23}}
\put(98,2){\circle*{2}}
\put(95,5){$2$}
\end{picture}
\\
\\
D_\ell &:&
\setlength{\unitlength}{.5 mm}
\begin{picture}(100,10)
\put(3,2){\circle{2}}
\put(0,5){$1$}
\put(4,2){\line(1,0){23}}
\put(28,2){\circle{2}}
\put(25,5){$2$}
\put(29,2){\line(1,0){13}}
\put(48,2){\circle*{1}}
\put(51,2){\circle*{1}}
\put(54,2){\circle*{1}}
\put(59,2){\line(1,0){13}}
\put(73,2){\circle{2}}
\put(68,5){$2$}
\put(74,1.5){\line(2,-1){13}}
\put(88,-5){\circle{2}}
\put(91,-7){$1$}
\put(74,2.5){\line(2,1){13}}
\put(88,9){\circle{2}}
\put(91,7){$1$}
\end{picture}
\end{eqnarray*}
while in cases $A_n$ and $C_n$ it doesn't change:
\begin{eqnarray*}
A_\ell&:&
\setlength{\unitlength}{.5 mm}
\begin{picture}(100,15)
\put(3,2){\circle{2}}
\put(0,5){$1$}
\put(4,2){\line(1,0){23}}
\put(28,2){\circle{2}}
\put(25,5){$1$}
\put(29,2){\line(1,0){23}}
\put(53,2){\circle{2}}
\put(50,5){$1$}
\put(54,2){\line(1,0){13}}
\put(72,2){\circle*{1}}
\put(75,2){\circle*{1}}
\put(78,2){\circle*{1}}
\put(84,2){\line(1,0){13}}
\put(98,2){\circle{2}}
\put(95,5){$1$}
\end{picture}\\
\\
C_\ell &:&
\setlength{\unitlength}{.5 mm}
\begin{picture}(100,10)
\put(3,2){\circle*{2}}
\put(0,5){$2$}
\put(5,2){\line(1,0){23}}
\put(28,2){\circle*{2}}
\put(28,5){$2$}
\put(29,2){\line(1,0){13}}
\put(48,2){\circle*{1}}
\put(51,2){\circle*{1}}
\put(54,2){\circle*{1}}
\put(59,2){\line(1,0){13}}
\put(73,2){\circle*{2}}
\put(70,5){$2$}
\put(74,2.5){\line(1,0){23}}
\put(74,1.5){\line(1,0){23}}
\put(98,2){\circle{2}}
\put(95,5){$1$}
\end{picture}
\end{eqnarray*}
}\hfill $\diamondsuit$
\end{remark}

\begin{lemma}\label{le-Omega*} If $S>0$ such that
$\{X\in \fs \mid \|X\|\leqq S\}\subset \Ad (K) \Omega^*\}$, then
we can use $S$ as the constant in {\rm Theorem \ref{t: PW}(1)}.
\end{lemma}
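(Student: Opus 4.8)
The plan is to show that the stated inclusion is exactly the hypothesis under which the proof of Theorem \ref{t: PW}(1) given in \cite{OS} goes through. That proof requires, as recorded in Remark \ref{explain}, two things of the constant: that $S<R$, and that the closed ball of radius $S$ in $\fs$ lie in the domain on which the Opdam estimates \cite{Opd} for the spherical functions $\varphi_\lambda$ hold (the domain built from $\Omega$). So I would reduce the inclusion $\{X\in\fs\mid \|X\|\leqq S\}\subset \Ad(K)\Omega^*$ to these two conditions and then invoke the cited proof.

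First I would translate the hypothesis from $\fs$ to $\fa$. Since the inner product on $\fs$ is $\Ad(K)$--invariant and every $\Ad(K)$--orbit in $\fs$ meets $\fa$ (indeed $\fs=\Ad(K)\fa$), intersecting the inclusion with $\fa$ gives $\{H\in\fa\mid \|H\|\leqq S\}\subseteq \Ad(K)\Omega^*\cap \fa$. Two elements of $\fa$ that are $\Ad(K)$--conjugate are already $W(\fg,\fa)$--conjugate, and $\Omega^*$ is $W(\fg,\fa)$--invariant, so $\Ad(K)\Omega^*\cap \fa=\Omega^*$. Hence the hypothesis is equivalent to $\{H\in\fa\mid \|H\|\leqq S\}\subseteq \Omega^*$, i.e. the closed $S$--ball of $\fa$ sits inside the convex $W$--invariant polygon $\Omega^*$; conversely, writing any $X\in\fs$ with $\|X\|\leqq S$ as $\Ad(k)H$ with $H\in\fa$ and $\|H\|=\|X\|$ recovers the original $\fs$--form.

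Next, using $\Omega^*\subseteq \Omega$ (the preceding lemma) together with the remark after Theorem \ref{also-inj-radius} that $\Omega$ lies in the open disk of radius $R/4$ in $\fs$, the closed $S$--ball of $\fa$ lies in that disk, forcing $S<R/4<R$; this is the first requirement of Remark \ref{explain}. For the second, every $X\in\fs$ with $\|X\|\leqq S$ has radial part $H\in\Omega^*\subseteq\Omega$, so the whole $S$--ball lies in $\Ad(K)\Omega$, precisely the region over which the holomorphic continuation and growth estimates for $\varphi_\lambda$ are valid. With both requirements verified for this $S$, the argument of \cite{OS} proving Theorem \ref{t: PW}(1) applies unchanged, giving for every $0<r<S$ that $\cS_\rho(f)$ of $f\in C^\infty_{r,\widetilde W}(M)^K$ extends to an element of $\PW_r(\ga_\C)^{\widetilde W}$.

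The main obstacle is matching the convex domain $\Omega^*$ to the exact region demanded by the spherical--function estimates: one must check that on $\Ad(K)\Omega^*$ the Opdam bound yields precisely the exponential type $e^{r|\Re\lambda|}$. This is why the tighter set $\Omega^*$, rather than $\Omega$ itself, is needed in types $B$ and $D$, where the leading coefficient of the highest root changes from $1$ to $2$ (Remark \ref{leftend}) and the element $\sigma$ built from the simple roots must satisfy $|\sigma(wX)|<\pi/2$ for \emph{every} $w\in W(\fg,\fa)$. Verifying this compatibility, together with the bookkeeping relating the compact extension domain $\exp(2\Omega)$ to the evaluation $\exp(i\Omega)$ that appears in Remark \ref{explain}, is the one genuinely delicate point; the orbit and Weyl--group reduction above is routine.
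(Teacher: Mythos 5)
Your argument is correct and is essentially the paper's own: both reduce the hypothesis, via the containment $\Omega^*\subseteq\Omega$ from the preceding lemma, to the criterion of \cite[Remark 4.3]{OS} recorded in Remark \ref{explain}, namely that $S<R$ and that the closed $S$--ball in $\fs$ lies in $\Ad(K)\Omega$. The only cosmetic differences are that the paper derives $S<R$ from the openness of $\Ad(K)\Omega$ together with the injectivity of $\Exp$ on it (Theorem \ref{also-inj-radius}), whereas you use the $R/4$ observation following that theorem, and that your preliminary reduction to the $\fa$--picture, while harmless, is not needed.

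One correction to your closing paragraph: the ``genuinely delicate point'' you flag is not actually part of this lemma. The Opdam estimates are required on $\Ad(K)\Omega$, not on $\Ad(K)\Omega^*$, and the inclusion $\Omega^*\subseteq\Omega$ that you already invoked discharges this completely; there is nothing further to check about the exponential type on $\Omega^*$. The smaller set $\Omega^*$ is not introduced to make the estimates in Theorem \ref{t: PW}(1) work, but so that Lemma \ref{le-Omega2*} holds, i.e.\ so that $\Omega_n^*=\Omega_k^*\cap\fa_n$ under propagation; that compatibility, which fails for $\Omega$ itself in types $B$ and $D$ because the coefficient of the added simple root in the highest root changes (Remark \ref{leftend}), is what makes the constant $S$ uniform along the direct system in Section \ref{sec7}. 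So your proof is complete as it stands once that worry is set aside.
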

\begin{proof} Recall from \cite[Remark 4.3]{OS} that
Theorem \ref{t: PW}(1) holds when $0 < S < R$ and
\begin{equation}\label{eq-Omega}
\{X\in \fs \mid \|X\|\leqq S\}\subseteqq \Ad (K) \Omega\, .
\end{equation}
But $\Ad (K)\Omega$ is open in $\fs$, and $\Exp : \Ad (K)\Omega \to M$ is
injective by Theorem \ref{also-inj-radius}.
Hence, if (\ref{eq-Omega}) holds then $S<R$, and the claim follows from
the first part of Remark \ref{explain}.
\end{proof}

We will now apply this to sequences $\{M_n\}$ where $M_k$ is a propagation of
$M_n$ for $k \geqq n$. We use the same notation as before and add the index
$n$ (or $k$) to indicate the dependence of the space $M_n$ (or $M_k$).
We start with the following lemma.
\begin{lemma}\label{le-Omega2*}  If $k\geqq n$ then
$\Omega_n^*=\Omega^*_k\cap \fa_n$.
\end{lemma}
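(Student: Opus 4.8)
The plan is to reduce everything to a single irreducible factor and then read the answer off the explicit coordinate descriptions of $\Omega^*$ in (\ref{eq-OmegaAn})--(\ref{eq-OmegaDn}). Both $\Omega^*$ and the relation of propagation respect the decomposition into irreducible factors: writing $\fa_k=\fa_k^1\oplus\cdots\oplus\fa_k^j$ and $\fa_n=\fa_n^1\oplus\cdots\oplus\fa_n^i$ with $i\leqq j$, numbered so that the $s$-th factor of $M_k$ propagates the $s$-th factor of $M_n$ for $s\leqq i$, we have $\Omega_k^*=\prod_s \Omega^*_{k,s}$ and $\Omega_n^*=\prod_s \Omega^*_{n,s}$, while $\fa_n$ sits in $\fa_k$ with zero component in the factors $s>i$. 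Since $0\in\Omega^*_{k,s}$ for every $s$, intersecting with $\fa_n$ imposes no condition on the factors $s>i$, so $\Omega_k^*\cap\fa_n=\prod_{s=1}^i(\Omega^*_{k,s}\cap\fa_n^s)$. Hence it suffices to treat the case in which $M_n$ and $M_k$ are irreducible, with $\Sigma_2(\fg_n,\fa_n)$ and $\Sigma_2(\fg_k,\fa_k)$ of the same classical type.

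If $\fa_n=\fa_k$ there is nothing to prove, so assume $r_n<r_k$. By the realization of the root systems in Section \ref{sec2}, $\fa_n$ is the subspace of $\fa_k$ obtained by setting the extra left-hand coordinates equal to zero. Comparing (\ref{eq-OmegaAn})--(\ref{eq-OmegaDn}) in ranks $r_n$ and $r_k$, every defining inequality of $\Omega_n^*$ is one of the defining inequalities of $\Omega_k^*$ restricted to $\fa_n$; this gives the inclusion $\Omega_k^*\cap\fa_n\subseteqq\Omega_n^*$ at once. The real content is the reverse inclusion, namely that the extra inequalities of $\Omega_k^*$ that pair an active coordinate with one of the vanishing coordinates are automatically satisfied once the $\Omega_n^*$ inequalities hold; this is the only step requiring an argument.

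I would verify this redundancy case by case. In types $B$ and $C$ the inequalities defining $\Omega_k^*$ are $|x_\ell|<\pi/4$ for $1\leqq\ell\leqq r_k$; on $\fa_n$ those with $\ell>r_n$ read $0<\pi/4$ and are vacuous, so $\Omega_k^*\cap\fa_n=\Omega_n^*$ with no further work. In type $A$ the extra inequalities have the form $|x_i|<\pi/2$ for an active index $i$; since the active coordinates satisfy $\sum_j x_j=0$ and the $\Omega_n^*$ inequalities force $\max_j x_j-\min_j x_j<\pi/2$, the mean being $0$ yields $-\pi/2<\min_j x_j\leqq x_i\leqq\max_j x_j<\pi/2$, whence $|x_i|<\pi/2$. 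In type $D$ the extra inequalities are $|x_i|<\pi/4$; choosing any active index $j\neq i$ (possible since a type-$D$ factor has rank at least $4$) and using $|x_i+x_j|<\pi/4$ and $|x_i-x_j|<\pi/4$ from $\Omega_n^*$, the triangle inequality gives $2|x_i|\leqq|x_i+x_j|+|x_i-x_j|<\pi/2$. In each case the extra inequalities are redundant, so $\Omega_k^*\cap\fa_n=\Omega_n^*$, which settles the irreducible case and, by the first paragraph, the general statement. The only mildly delicate point is precisely this redundancy in types $A$ and $D$, and it is exactly the elementary estimate indicated above.
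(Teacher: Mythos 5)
Your proof is correct and follows essentially the same route as the paper's: reduce to a single irreducible factor, use the explicit coordinate descriptions (\ref{eq-OmegaAn})--(\ref{eq-OmegaDn}), observe that one inclusion is immediate and that only the extra inequalities pairing an active coordinate with a vanishing one need checking, with types $B$ and $C$ vacuous and type $D$ handled by the identical two-inequality trick. Your type-$A$ estimate (mean zero plus $\max-\min<\pi/2$) is just a repackaging of the paper's summation argument, and your justification of the reduction to the irreducible case is slightly more explicit than the paper's, but there is no substantive difference.
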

\begin{proof} We can assume that $M$ is irreducible. As $M_k$ propagates
$M_n$ it follows that we are only adding simple roots to the left
on the Dynkin diagram for $\Sigma_2$. Let $r_n$ denote the rank of $M_n$ and
$r_k$ the rank of $M_k$. We can assume that $r_n < r_k$, as the claim is obvious
for $r_n = r_k$. We use the above explicit description $\Omega^*$ given above
and case by case
inspection of each of the irreducible root system in Section \ref{sec2}.

Assume that $\Sigma_{n,2}$ is of type $A_{r_n}$ and $\Sigma_{k,2}$ is
of type $A_{r_k}$ with $r_n < r_k$. It follows from (\ref{eq-OmegaAn}) that
$\Omega_n^*\subseteqq \Omega_k^*\cap \fa_n$.
Let $(0,x)\in \Omega_n^*$.
For $j>i$ we have
\begin{equation}\label{eq-alpha(0,x)}
\pm (f_{j}-f_i) ((0,x))=\left\{\begin{array}{c@{\quad\text{for}\quad}l}
\pm (x_j-x_i)&j\leqq r_n+1\\
\mp (-x_i)&j>r_n+1\geqq i\\
0&j,i>r_n+1
\end{array}\right.
\end{equation}
Let $i\leqq r_n+1$. Then, using that $x_i=-\sum_{j\not=i}x_j$ and
$|x_i-x_j| <\pi/2$, we get
$$
-r_k\tfrac{\pi}{2} <   \sum_{i\ne j} (x_i - x_j) = r_k x_i-\sum_{j\not= i}x_j
=(r_k+1)x_i < r_k\tfrac{\pi}{2}\, .
$$
Hence
\[-\tfrac{\pi}{2}<-\tfrac{r_k}{r_k+1}\, \tfrac{\pi}{2}<x_i<\tfrac{r_k}{r_k+1}\, \tfrac{\pi}{2}<\tfrac{\pi}{2}\, .\]
It follows now from (\ref{eq-alpha(0,x)}) that $(0,x)\in \Omega_k^*\cap \fa_n$.

The cases of types $B$ and $C$ are obvious from (\ref{eq-OmegaBn}) and
(\ref{eq-OmegaCn}). For the case of type $D$
we note that $|x_i\pm x_j|< \tfrac{\pi}{4}$ implies both
-$\tfrac{\pi}{4} < x_i - x_j < \tfrac{\pi}{4}$ and
-$\tfrac{\pi}{4} < x_i + x_j < \tfrac{\pi}{4}$.  Adding,
-$\tfrac{\pi}{2} < 2x_i < \tfrac{\pi}{2}$, so $|x_i| < \tfrac{\pi}{4}$.
Hence $(0,x)\in \Omega_k^*\cap \fa_n$ if and only if
$x\in \Omega_n^*$ by (\ref{eq-OmegaDn}).
\end{proof}

We can now proceed as in Section \ref{sec4}. We will always assume that
$S>0$ is so that the closed ball in $\fs$ of radius $S$ is contained in
$\Omega^*$. The group $W$ is defined as before. Define
$C_{k,n}:C^\infty_{r,W_k}(M_k)^{K_k}\to C^\infty_{r,W_n}(M_n)^{K_n}$ by
$C_{k,n}:=\cI_{n,\rho_n} \circ P_{k,n}\circ \cS_{k, \rho_k}$, in other words
\[C_{k,n}(f)(x)=\sum_{I\in(\Z^+)^{r_n}} \deg(\mu_{I,n})\widehat{f}
(\mu_{I,k}-\rho_k+\rho_n) \psi_{\mu_{I,n}}(x)\, .\]

\begin{theorem}[Paley-Wiener Isomorphisms-II]\label{th-PWcompactII} Assume that
$M_k$ propagates $M_n$ and $0<r<S$.  Then
the following holds:
\begin{enumerate}
\item The map $P_{k,n}:
\PW_r (\fa_{k,\C}^*)^{W_k} \to \PW_r(\fa_{n,\C}^*)^{W_n}$ is surjective.
\item The map $C_{k,n} :C^\infty_{r,W_k}(M_k)^{K_k}
\to C^\infty_{r,W_n}(M_n)^{K_n}$ is surjective.
\end{enumerate}
\end{theorem}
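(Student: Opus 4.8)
The plan is to run the compact analogue of the noncompact argument in Theorem \ref{th-IsomorphismNonCompact2}, with the Helgason--Gangolli transform replaced by the compact Paley--Wiener isomorphism of Theorem \ref{t: PW}. The two ingredients are the abstract restriction result Theorem \ref{th-IsoPW1} and the explicit verification of its two hypotheses furnished by Theorem \ref{th-AdmExtG/K}.

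For part (1) I would invoke Theorem \ref{th-IsoPW1} with $E=\fa_n$ and $F=\fa_k$, carrying the compatible inner products and the reflection groups $W_n=W(\fg_n,\fa_n)$, $W_k=W(\fg_k,\fa_k)$ (read as the extensions $\widetilde W$ when a factor has $\Psi_{1/2}$ of type $D$). Because $M_k$ propagates $M_n$, Theorem \ref{th-AdmExtG/K} supplies precisely the two hypotheses: the identity $W_{\fa_n}(\fg_k,\fa_k)|_{\fa_n}=W_n$ (part (1) of that theorem, or part (3) after passing to $\widetilde W$ in the type-$D$ case) and the surjectivity of the restriction $\rI_{W_k}(\fa_k)\to\rI_{W_n}(\fa_n)$. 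Theorem \ref{th-IsoPW1} then gives that $P_{k,n}\colon \PW_r(\fa_{k,\C}^*)^{W_k}\to\PW_r(\fa_{n,\C}^*)^{W_n}$ is surjective, which is (1).

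For part (2) I would read off surjectivity from the factorization $C_{k,n}=\cI_{n,\rho_n}\circ P_{k,n}\circ\cS_{k,\rho_k}$. The key preliminary is that a single constant $S$ serves the whole system: by Lemma \ref{le-Omega2*} one has $\Omega_n^*=\Omega_k^*\cap\fa_n$, and by Theorem \ref{also-inj-radius} the injectivity radius is constant along each classical series, so Lemma \ref{le-Omega*} lets one fix one $S$ valid for every $M_n$. For $0<r<S$, Theorem \ref{t: PW}(4) makes $\cS_{k,\rho_k}$ a bijection onto $\PW_r(\fa_{k,\C}^*)^{W_k}$, Theorem \ref{t: PW}(3) makes the inverse transform $\cI_{n,\rho_n}$ surjective onto $C^\infty_{r,W_n}(M_n)^{K_n}$, and $P_{k,n}$ is onto by part (1); hence the composite $C_{k,n}$ is surjective, and the explicit series formula for $C_{k,n}(f)$ is just this composition written out.

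The main obstacle is not the formal surjectivity chase but the compatibility bookkeeping that makes the square commute. I would need to check that the $\rho$-shifted transforms on $M_k$ and $M_n$ intertwine with the restriction $\fa_{k,\C}^*\to\fa_{n,\C}^*$; this rests on Theorem \ref{l-inductiveSystemOfRep}(1) and Lemma \ref{simple-res}, giving $\mu_{I,k}|_{\fa_n}=\mu_{I,n}$ and the stability of the class-one fundamental weights under propagation, together with the matching of the $\rho$-shifts encoded in the argument $\mu_{I,k}-\rho_k+\rho_n$. The other delicate point is the $W$ versus $\widetilde W$ dichotomy in the type-$D$ factors: there ordinary $W$ fails both hypotheses of Theorem \ref{th-IsoPW1} (Theorem \ref{th-AdmExtG/K}(2)), and one must pass to $\widetilde W$, where Theorem \ref{th-AdmExtG/K}(3) restores them, before the argument applies.
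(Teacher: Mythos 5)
Your proposal is correct and follows essentially the same route as the paper: the paper's own (very terse) proof cites exactly Theorem \ref{th-IsoPW1} together with Lemmas \ref{le-Omega*} and \ref{le-Omega2*}, and your write-up simply makes explicit the two things the paper leaves implicit, namely that Theorem \ref{th-AdmExtG/K} supplies the hypotheses of Theorem \ref{th-IsoPW1} and that part (2) is read off from the factorization $C_{k,n}=\cI_{n,\rho_n}\circ P_{k,n}\circ\cS_{k,\rho_k}$ via Theorem \ref{t: PW}.
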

\begin{proof} This follows from  Theorem \ref{th-IsoPW1}, Lemma \ref{le-Omega*},
and Lemma \ref{le-Omega2*}.
\end{proof}

We assume now that $\{M_n,\iota_{k,n}\}$ is a injective system of
Riemannian symmetric spaces of compact type
such that $M_k$ is a propagation of $M_n$ along a cofinite subsequence.
Here the direct system maps $\iota_{k,n}: M_n\to M_k$ are injections.
We pass to that cofinite subsequence and now assume that
$M_k$ is a propagation of $M_n$ whenever $k \geqq n$.  Let
\[M_\infty =\varinjlim M_n\, .\]

The compact symmetric spaces in Table \ref{symmetric-case-class} give
rise to the following injective limits of symmetric spaces.

\begin{equation}\label{e-infiniteDim}
\begin{aligned}
{\rm 1.}\ &\bigl (\SU (\infty) \times \SU(\infty)\bigr )/\diag\,\SU(\infty),
    \text{ group manifold } \SU(\infty),\\
{\rm 2.}\ &\bigl (\Spin(\infty)\times \Spin(\infty)\bigr )/\diag\, \Spin(\infty),
    \text{ group manifold } \Spin(\infty),\\
{\rm 3.}\ &\bigl (\Sp(\infty)\times \Sp(\infty)\bigr )/\diag\, \Sp(\infty),
    \text{ group manifold } \Sp (\infty),\\
{\rm 4.}\ &\SU (p + \infty)/\mathrm{S}(\U(p)\times \U(\infty)),\
    \C^p \text{ subspaces of } \C^\infty, \\
{\rm 5.}\ &\SU (2\infty)/[\mathrm{S}(\U (\infty) \times \U (\infty))],\
    \C^\infty \text{ subspaces of infinite codim in } \C^\infty, \\
{\rm 6.}\ &\SU (\infty)/\SO (\infty),\
    \text{ real forms of } \C^\infty \\
{\rm 7.}\ &\SU (2\infty)/\Sp (\infty), \
    \text{ quaternion vector space structures on } \C^\infty,\\
{\rm 8.}\ &\SO (p + \infty)/[\SO (p)\times \SO (\infty)],\text{ oriented }
    \R^p \text{ subspaces of } \R^\infty, \\
{\rm 9.}\ &\SO (2\infty)/[\SO (\infty)\times \SO (\infty)], \
    \R^\infty \text{ subspaces of infinite codim in } \R^\infty, \\
{\rm 10.}\ &\SO (2\infty)/\U (\infty), \
    \text{ complex vector space structures on } \R^\infty, \\
{\rm 11.}\ &\Sp (p + \infty)/[\Sp(p)\times \Sp(\infty)],\
    \H^p \text{ subspaces of } \H^\infty, \\
{\rm 12.}\ &\Sp (2\infty)/[\Sp(\infty)\times \Sp(\infty)], \
    \H^\infty \text{ subspaces of infinite codim in } \H^\infty, \\
{\rm 13.}\ &\Sp (\infty)/\U (\infty), \
    \text{ complex forms of } \H^\infty.
\end{aligned}
\end{equation}

We also have as before injective systems
$\fg_n\hookrightarrow \fg_k$, $\fk_n\hookrightarrow \fk_k$, $\fs_n\hookrightarrow \fs_k$,
and $\fa_n\hookrightarrow \fa_k$ giving rise to corresponding injective systems. Let
$$
\mfg_\infty :=\varinjlim \mfg_n\, ,\quad \gk_\infty :=\varinjlim \gk_n\, ,\quad
\gs_\infty :=\varinjlim\gs_n\, , \quad \ga_\infty := \varinjlim\ga_n\, ,
 \quad\text{and}, \quad \fh_\infty :=\varinjlim \fh_n\, .$$
We have that $\mfg_\infty=\gk_\infty \oplus \gs_\infty$ is the eigenspace decomposition of
$\mfg_\infty $ with respect to the involution $\theta_\infty :=\varinjlim \theta_n$,
$\fa_\infty$ is a maximal abelian subspace of $\fs_\infty$.

We have also projective systems $\{\PW_{r}(i\fa_n)^{W_n}\}$, and $\{C_{r,W_n}(M_n)^{K_n}\}$
with surjective projections.
Let
\begin{eqnarray*}
\PW_r(\fa^{\infty *}_\C )&:=&\varprojlim \PW_{r}(\fa_{n,\C}^*)^{W_n}\\
C_{r,W_\infty}(M_\infty)^{K_\infty}
&:=&\varprojlim C_{r,W_n}(M_n)^{K_n} \, .
\end{eqnarray*}
As before we view the elements of
$\PW_r(\fa^*_{\infty,_\C})$ as
$W_\infty$--invariant functions on $\fa_{\infty,\C}^*$, and the elements of
of $C_{r,W_\infty}(M_\infty)^{K_\infty}$ as $K_\infty$--invariant functions
on $M_\infty$; see Remark \ref{inf-holo}. For $\mathbf{f}=(f_n)_n\in
C_{r,W_\infty}(M_\infty)^{K_\infty}$ define $\cS_{\rho,\infty} (\mathbf{f})
\in
\PW_r(\fa_{\infty,\C}^*)$ by
\begin{equation}
\cS_{\rho,\infty} (\mathbf{f}):=\{\cS_{\rho,n}(f_n)\}\, .
\end{equation}
Then $\cS_{\rho,\infty} (\mathbf{f})$ is well defined by Theorem \ref{th-PWcompactII}
and we have a commutative diagram
$$\xymatrix{\cdots & C^\infty_{r,W_n}(M_n)^{K_n}\ar[d]_{\cS_{\rho,n}}&
C^\infty_{r,W_{n+1}}(M_{n+1})^{K_{n+1}}\ar[d]_{\cS_{\rho,n+1}}\ar[l]_(0.55){C_{n+1,n}}&
\ar[l]_(0.2){C_{n+2,n+1}}\qquad \cdots &
C_{r,W_\infty}(M_\infty)^{K_\infty}\ar[d]_{\cS_{\rho,\infty}}
 \\
\cdots & \PW_r(\fa^*_{n,\C})&  \PW_r(\fa^*_{n+1\C}) \ar[l]^(0.55){P_{n+1,n}}
&\ar[l]^(0.2){P_{n+2,n+1}} \qquad \cdots & \PW_r(\fa_{\infty,\C}^* )\\
}\, ,
$$
see also \cite{OW,KW2009} for the spherical Fourier transform and
direct limits.

\begin{theorem}[Infinite dimensional Paley-Wiener Theorem-II]
\label{th-ProjLimCompact}
Let the notation be as above. Then
$\PW_r(\fa_{\infty,\C}^*)\not=\{0\}$,
$C_{r,W_\infty}(M_\infty)^{K_\infty}\not=\{0\}$,
and the spherical Fourier transform
\[\cF_\infty :C_{r,W_\infty}(M_\infty)^{K_\infty}
\to \PW_r(\fa^{\infty *}_\C )\]
is injective.
\end{theorem}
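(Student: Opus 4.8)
The plan is to deduce everything formally from the finite-dimensional results already assembled into a projective system, exactly in the spirit of Theorem \ref{th-projectiveLimitPW}. The key inputs are: the transition maps $P_{k,n}$ and $C_{k,n}$ are surjective (Theorem \ref{th-PWcompactII}), each $\rho$-shifted spherical Fourier transform $\cS_{\rho,n}$ is injective, and the squares in the commutative diagram preceding the statement commute, so that $\cF_\infty=\varprojlim\cS_{\rho,n}$ is a well-defined map between the two inverse limits (for $\mathbf f=(f_n)$ compatible, $P_{k,n}(\cS_{\rho,k}(f_k))=\cS_{\rho,n}(C_{k,n}(f_k))=\cS_{\rho,n}(f_n)$). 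Granting these, the proof is a soft diagram chase with no new analytic content.

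First I would establish that the two limits are nonzero. Each finite-level space $\PW_r(\fa_{n,\C}^*)^{W_n}$ is nonzero, being the image under the injective map $\cS_{\rho,n}$ of the nonzero space $C^\infty_{r,W_n}(M_n)^{K_n}$. Fixing an index $n_0$ and a nonzero $F_{n_0}$, I would use the surjectivity of the maps $P_{k,n}$ to lift recursively, choosing $F_{k+1}\in\PW_r(\fa_{k+1,\C}^*)^{W_{k+1}}$ with $P_{k+1,k}(F_{k+1})=F_k$ for all $k\geqq n_0$, and setting $F_n:=P_{n_0,n}(F_{n_0})$ for $n<n_0$. Functoriality of the $P_{k,n}$ makes this family compatible, and it is a nonzero element of $\PW_r(\fa^{\infty *}_\C)$ since its $n_0$-component is $F_{n_0}\neq 0$. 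The identical argument using the surjectivity of the $C_{k,n}$ gives $C_{r,W_\infty}(M_\infty)^{K_\infty}\neq\{0\}$; alternatively one transports the previous family through the bijections $\cS_{\rho,n}^{-1}$.

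The injectivity of $\cF_\infty$ then requires essentially nothing beyond the injectivity of each $\cS_{\rho,n}$. Indeed, for a compact symmetric space the spherical Fourier transform $f\mapsto\widehat f$ is injective on $C^\infty(M_n)^{K_n}$, and $\cS_{\rho,n}(f_n)$ restricted to $\Lambda^+(G_n,K_n)+\rho_n$ recovers $\widehat{f_n}$; hence $\cS_{\rho,n}(f_n)=0$ forces $f_n=0$. Given $\mathbf f=(f_n)_n$ with $\cF_\infty(\mathbf f)=\{\cS_{\rho,n}(f_n)\}=0$, every component $\cS_{\rho,n}(f_n)$ vanishes, so every $f_n=0$ and $\mathbf f=0$.

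The one point that genuinely needs care — and which is already secured by the earlier sections — is the uniformity of the constant $S$ along the whole system, so that the finite-level statements (existence of the holomorphic extension, surjectivity of $P_{k,n}$ and $C_{k,n}$, and injectivity of $\cS_{\rho,n}$) all hold for one common radius $r$. This is exactly what the constancy of the injectivity radius (Theorem \ref{also-inj-radius}), the nesting $\Omega_n^*=\Omega_k^*\cap\fa_n$ (Lemma \ref{le-Omega2*}), and Lemma \ref{le-Omega*} provide, via Theorem \ref{th-PWcompactII}. I expect this uniformity to be the only real obstacle; once it is in place the theorem is purely formal. I would also \emph{remark} that, unlike the noncompact case (Theorem \ref{th-ProjLimNonCompact}), only injectivity is asserted here, because the finite-level bijectivity of Theorem \ref{t: PW}(4) may require a strictly smaller constant than the $S$ fixed for the system, so surjectivity of $\cF_\infty$ is not claimed.
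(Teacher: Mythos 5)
Your proposal is correct and follows essentially the route the paper intends: the paper leaves this theorem without an explicit proof because, exactly as you argue, it is the formal consequence of the surjectivity of the transition maps in Theorem \ref{th-PWcompactII} combined with the recursive-lifting argument of Theorem \ref{th-projectiveLimitPW} and the injectivity of each finite-level spherical Fourier transform. Your closing remark on why only injectivity (rather than bijectivity) is asserted is also consistent with the paper's own caveat in Remark \ref{explain} that the constant $S$ may differ between the parts of Theorem \ref{t: PW}.
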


\section{Comparison with the $L^2$ Theory}\label{sec8}
\noindent
The maps considered up to this point are based on $C^\infty$  and
$C^\infty_c$ spaces rather than $L^2$ spaces and
unitary representation theory.  It is standard that $L^2$ for a
compact symmetric space is just a Hilbert space completion of the
corresponding $C^\infty$ space, and it turns out \cite[Proposition 3.27]{W2009}
that the same is true for inductive limits of compact symmetric spaces.
Here we discuss those inductive limits; any consideration of the
projective limit of $L^2$ spaces follows similar lines by replacing
the the maps of the inductive limit by the corresponding orthogonal
projections, because inductive and projective limits are the
same in the Hilbert space category.

The material of this section is taken from \cite[Section 3]{W2008a}
and \cite[Section 3]{W2009}
and adapted to our setting.  We always assume without further comments that
all extensions are propagations.

There are three steps to the comparison.  First, we describe the construction
of a direct limit Hilbert space
$L^2(M_\infty) := \varinjlim \{L^2(M_n),L_{m,n}\}$ that carries a natural
multiplicity--free unitary action of $G_\infty$.  Then we describe the
ring $\cA(M_\infty) := \varinjlim \{\cA(M_n),\nu_{m,n}\}$
of regular functions on $M_\infty$ where $\cA(M_n)$ consists of the
finite linear combinations of the matrix coefficients of the $\pi_\mu$
with $\mu \in \Lambda_n^+(G_n,K_n)$ and such that $\nu_{m,n}(f)|_{M_n} = f$.
Thus $\cA(M_\infty)$ is a (rather small) $G_\infty$--submodule of the projective
limit $\varprojlim\{\cA(M_n),\text{ restriction}\}$.  Third, we describe
a $\{G_n\}$--equivariant morphism
$\{\cA(M_n),\nu_{m,n}\} \rightsquigarrow \{L^2(M_n),L_{m,n}\}$ of
direct systems that embeds $\cA(M_\infty)$ as a dense $G$--submodule of
$L^2(M_\infty)$, so that $L^2(M_\infty)$ is $G_\infty$--isomorphic to a Hilbert
space completion of the function space $\cA(M_\infty)$.

We recall first some basic facts about the vector valued Fourier transform on
$M_n$ as well as the decomposition of $L^2(M_n)$ into irreducible summands.
To simplify notation write $\Lambda^+_n$ for $\Lambda^+(G_n,K_n)$.
Let $\mu \in\Lambda_n^+$ and let $V_{n,\mu}$ denote the irreducible
$G_n$--module of highest weight $\mu$. Recursively in $n$, we
choose a highest weight vector $v_{n,\mu} \in V_{n,\mu}$ and
and a $K_n$--invariant unit vector $e_{n,\mu} \in V_\mu^{K_n}$
such that
(i) $V_{n-1,\mu} \hookrightarrow V_{n,\mu}$ is isometric and
$G_{n-1}$--equivariant and sends $v_{n-1,\mu}$ to a multiple
of $v_{n,\mu}$, (ii) orthogonal projection $V_{n,\mu} \to V_{n-1,\mu}$
sends $e_{n,\mu}$ to a non--negative real
multiple $c_{n,n-1,\mu}e_{n-1,\mu}$ of $e_{n-1,\mu}$, and (iii)
$\langle v_{n,\mu},e_{n,\mu}\rangle =1$.
(Then $0 < c_{n,n-1,\mu} \leqq 1$.)  Note that orthogonal projection
$V_{m,\mu} \to V_{n,\mu}, m \geqq n$, sends
$e_{m,\mu}$ to $c_{m,n,\mu}e_{n,\mu}$ where
$c_{m,n,\mu} = c_{m,m-1,\mu}\cdots c_{n+1,n,\mu}$.

The Hermann Weyl degree formula provides polynomial functions on $\ga_\C^*$
that map $\mu$ to $\deg(\pi_{n,\mu}) = \dim V_{n,\mu}$.
Earlier in this paper we had written
$\deg(\mu)$ for that degree when $n$ was fixed, but here it is crucial to
track the variation of $\deg(\pi_{n,\mu})$ as $n$ increases.
Define a map $v\mapsto f_{n,\mu, v}$ from $V_{n,\mu}$ into
$L^2(M_n)$ by
\begin{equation}\label{def-Coeff}
f_{n,\mu,v}(x) = \langle v,\pi_{n,\mu}(x)e_\mu \rangle \,.
\end{equation}
It follows by the Frobenius--Schur orthogonality relations that
$v\mapsto \deg(\pi_{n,\mu} )^{1/2} f_{\mu,v}$ is a unitary
$G_n$ map from $V_\mu $ onto its image in $L^2(M_n)$.

The operator valued Fourier transform
$$L^2(G_n) \to \bigoplus_{\mu\in \Lambda^+_n}\mathrm{Hom} (V_{n,\mu},V_{n,\mu})
\cong \bigoplus_{\mu\in \Lambda^+_n}V_{n,\mu}\otimes V_{n,\mu}^*$$
is defined by $f \mapsto \bigoplus_{\mu\in \Lambda^+_n}\pi_{n,\mu} (f)$ where
$\pi_{n,\mu} (f) \in \mathrm{Hom} (V_{n,\mu},V_{n,\mu})$ is given by
\begin{equation}
\pi_{n,\mu} (f) v := \int_{G_n} f(x) \pi_{n,\mu} (x)v\,\text{ for } f\in L^2(G_n)\,  .
\end{equation}
Denote by $P^{K_n}_\mu $ the orthogonal projection
$V_{n,\mu} \to V_{n,\mu} ^{K_n}$. Then
$P^{K_n}_\mu (v)=\int_{K_n}\pi_{n,\mu} (k)v\, dk$,
and if $f$ is right $K_n$--invariant, then
$$\pi_{n,\mu} (f)=\pi_{n,\mu} (f)P^{K_n}_\mu\, .$$
That gives us the vector valued Fourier transform
$f\mapsto \widehat{f}: \Lambda^+_n\to \bigoplus_{\mu\in\Lambda_n^+} V_{n,\mu}$\,,
\begin{equation}
L^2(M_n)\to \bigoplus_{\mu\in\Lambda_n^+} V_{n,\mu}
\text{ defined by } f\mapsto \widehat{f}(\mu ) :=\pi_{n,\mu} (f)e_{n,\mu}\, .
\end{equation}
Then the Plancherel formula for $L^2(M_n)$ states that
\begin{equation}\label{e-FourierExpansion}
f=\sum_{\mu\in \Lambda_n^+} \deg(\pi_{n,\mu} ) f_{\mu ,\widehat{f}(\mu )}
=\sum_{\mu\in \Lambda_n^+} \deg(\pi_{n,\mu} ) \langle
\widehat{f}(\mu ), \pi_{n,\mu} (\, \cdot \, )e_{n,\mu} \rangle
\end{equation}
in $L^2(M_n)$ and
\begin{equation}\label{e-Norm}
\|f\|^2_{L^2}=\sum_{\mu\in \Lambda^+_n} \deg(\pi_{n,\mu} )
\|\widehat{f}(\mu )\|_{HS}^2\, .
\end{equation}
If $f$ is smooth, then the series in (\ref{e-FourierExpansion}) converges
in the $C^\infty$ topology of $C^\infty (M_n)$.

For $n\leqq m$ and $\mu = \mu_{I,n}\in \Lambda^+_n$ consider the following
diagram of unitary $G_n$-maps, adapted from \cite[Equation 3.21]{W2009}:
$$
\xymatrix{V_{\mu_{I,n}}
\ar[d]_{v\mapsto \deg(\pi_{n,\mu})^{1/2} f_{\mu_{I,n},v}}\ar[r]^{v\mapsto v}
& V_{\mu_{I,m}}\ar[d]^{v\mapsto \deg(\pi_{m,\mu})^{1/2}  f_{\mu_{I,m},v}}\\
L^2(M_n) \ar[r]_{L_{m,n}}& L^2(M_m)   }
$$
where $L_{m,n}: L^2(M_n)\to L^2(M_k)$ is the $G_n$--equivariant
partial isometry defined by
\begin{equation}\label{def-psi(m,n)}
L_{k,n}: \sum_{I_n} f_{\mu_{I,n},w_{I}}\mapsto
 \sum_{I_m} c_{m,n,\mu}\sqrt{\tfrac{\deg (\pi_{m,\mu})}{\deg (\pi_{n,\mu})}}\,
f_{\mu_{I,m},w_{I}}\, ,\quad w_I\in V_{n,\mu}\, .
\end{equation}
As in \cite[Section 4]{W2009} we have
\begin{theorem}\label{fun-restriction}
The map $L_{k,n}$ of {\rm (\ref{def-psi(m,n)})}
is a $G_n$--equivariant partial isometry with image
$$
\Im (L_{m,n} )\cong {\bigoplus}_{I\in (\Z^+)^{r_k},\
k_{r_n+1}=\ldots = k_{r_k}=0} V_{\mu_{I}}\, .
$$
If
$n\leqq m \leqq k$ then
$$L_{k,n}=L_{m,n}\circ L_{k,m}$$
making $\{L^2(M_n),L_{k ,n}\}$ into
a direct system of Hilbert spaces.
\end{theorem}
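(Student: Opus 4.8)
The plan is to deduce all three assertions from the commuting square displayed just before the statement, whose two vertical maps are the coefficient--function embeddings $v\mapsto \deg(\pi_{n,\mu})^{1/2}f_{\mu_{I,n},v}$ and $v\mapsto \deg(\pi_{m,\mu})^{1/2}f_{\mu_{I,m},v}$. First I would invoke the Frobenius--Schur orthogonality relations recorded after (\ref{def-Coeff}): each of these vertical maps is a $G_n$--equivariant isometry of the (restricted) module $V_{\mu_{I,n}}$ onto the corresponding $\mu$--isotypic component of $L^2(M_n)$, respectively of $L^2(M_m)$. The horizontal top map is the isometric $G_n$--inclusion $V_{\mu_{I,n}}\hookrightarrow V_{\mu_{I,m}}$ produced by Theorem \ref{l-inductiveSystemOfRep}(2)--(3) (the intertwiner $T_{m,n}$). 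Reading the square, $L_{m,n}$ is therefore, on each $\mu_{I,n}$--isotypic subspace of $L^2(M_n)$, a composite of isometries and hence an isometry; since by (\ref{e-FourierExpansion}) these subspaces span a dense part of $L^2(M_n)$ and are pairwise orthogonal, $L_{m,n}$ extends to a $G_n$--equivariant partial isometry of $L^2(M_n)$ into $L^2(M_m)$.

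Next I would identify the image. By the very form of (\ref{def-psi(m,n)}) the component $V_{\mu_{I,n}}\subset L^2(M_n)$ is carried into the $\mu_{I,m}$--isotypic component of $L^2(M_m)$, with $I_m=(I_n,0_{r_m-r_n})$. By Theorem \ref{l-inductiveSystemOfRep}(1) together with Lemma \ref{resmult}, the highest weights that occur this way are exactly those $\mu=\sum_{j=1}^{r_n}k_j\xi_{m,j}$ built from the first $r_n$ class--one fundamental weights, equivalently the $\mu_{I,m}$ whose multi--index satisfies $k_{r_n+1}=\cdots=k_{r_m}=0$. Hence $\Im(L_{m,n})$ is precisely the Hilbert direct sum of those isotypic components, which is the stated decomposition; the irreducibility and multiplicity--one needed to see that no further component is met are furnished by Lemma \ref{resmult}.

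For the cocycle identity I would evaluate $L_{k,m}\circ L_{m,n}$ on a single coefficient function $f_{\mu_{I,n},w}$, $n\leqq m\leqq k$. Applying (\ref{def-psi(m,n)}) twice, the resulting scalar is the product of the two degree ratios $\sqrt{\deg(\pi_{m,\mu})/\deg(\pi_{n,\mu})}\cdot\sqrt{\deg(\pi_{k,\mu})/\deg(\pi_{m,\mu})}$, which telescopes to $\sqrt{\deg(\pi_{k,\mu})/\deg(\pi_{n,\mu})}$, times the product of projection constants $c_{m,n,\mu}c_{k,m,\mu}$, which equals $c_{k,n,\mu}$ by the defining relation $c_{k,n,\mu}=c_{k,k-1,\mu}\cdots c_{n+1,n,\mu}$. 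This is exactly the scalar defining $L_{k,n}$ on $f_{\mu_{I,n},w}$, so the two maps agree on a dense subspace and, being bounded, agree everywhere: $L_{k,n}=L_{k,m}\circ L_{m,n}$. Together with $L_{n,n}=\mathrm{id}$ this makes $\{L^2(M_n),L_{k,n}\}$ a direct system of Hilbert spaces.

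The step I expect to be the main obstacle is pinning down the scalar on each isotypic component, i.e. the compatibility between the degree normalization $\sqrt{\deg(\pi_{m,\mu})/\deg(\pi_{n,\mu})}$ and the projection constant $c_{m,n,\mu}$ coming from $V_{m,\mu}\to V_{n,\mu}$, $e_{m,\mu}\mapsto c_{m,n,\mu}e_{n,\mu}$; this is exactly what the commuting square encodes and must be checked against (\ref{def-psi(m,n)}) and against \cite{W2009}. Everything else --- $G_n$--equivariance, the isometry on each component via Frobenius--Schur, the identification of the image through the multiplicity--one branching of Lemma \ref{resmult}, and the telescoping in the cocycle identity --- is then formal.
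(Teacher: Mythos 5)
The paper prints no argument for this theorem beyond the citation to \cite[Section 4]{W2009}, so you are supplying a proof where none is given; your outline --- decompose into isotypic components, apply Schur orthogonality on each, telescope the scalars for the composition law --- is the natural and essentially the only route. But the step you yourself flag as ``the main obstacle'' is a genuine gap, and on checking it does not close: the commuting square and the formula (\ref{def-psi(m,n)}) are inconsistent by exactly the factor $c_{m,n,\mu}$. Schur orthogonality gives $\|f_{\mu_{I,n},w}\|_{L^2(M_n)}=\deg(\pi_{n,\mu})^{-1/2}\|w\|$ and $\|f_{\mu_{I,m},w}\|_{L^2(M_m)}=\deg(\pi_{m,\mu})^{-1/2}\|w\|$, so the map as printed in (\ref{def-psi(m,n)}) multiplies norms on the $\mu$--isotypic component by $c_{m,n,\mu}$; since $c_{m,n,\mu}=\|P_{V_{n,\mu}}e_{m,\mu}\|=1$ only when the $K_m$--fixed unit vector already lies in $V_{n,\mu}$, one has $c_{m,n,\mu}<1$ in general, the map is then a strict contraction on that component, and the square does not commute. ``Reading the square'' therefore silently replaces (\ref{def-psi(m,n)}) by the formula without the $c_{m,n,\mu}$, which is what the square forces and what makes $L_{m,n}$ a partial isometry. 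You must either adopt that corrected formula or concede that the partial-isometry assertion fails for the printed one. Note that your cocycle computation is insensitive to the choice, since $c_{k,m,\mu}\,c_{m,n,\mu}=c_{k,n,\mu}$ and the degree ratios telescope separately; it does additionally use that the recursive choices of $e_{j,\mu}$ and of the embeddings $V_{n,\mu}\subseteqq V_{m,\mu}\subseteqq V_{k,\mu}$ are compatible, which the paper arranges just before (\ref{def-Coeff}).

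A smaller imprecision: the image is not ``the Hilbert direct sum of those isotypic components'' of $L^2(M_m)$. On the $\mu_{I,m}$--isotypic component, $L_{m,n}$ reaches only the coefficient functions $f_{\mu_{I,m},w}$ with $w\in V_{n,\mu}\subseteqq V_{m,\mu}$, i.e.\ the $G_n$--cyclic span of the highest weight line, which by Lemma \ref{resmult} is a copy of $V_{\mu_{I,n}}$ and is a proper subspace whenever $\dim V_{n,\mu}<\dim V_{m,\mu}$. So $\Im(L_{m,n})$ is the orthogonal sum of copies of $V_{\mu_{I,n}}$, taken over the multi-indices supported on the first $r_n$ coordinates; with that reading, your identification of which highest weights occur, via Theorem \ref{l-inductiveSystemOfRep}(1) and the multiplicity-one statement of Lemma \ref{resmult}, is correct.
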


\noindent
Define
\begin{equation}
L^2(M_\infty ) := \varinjlim L^2(M_n),
\end{equation}
direct limit in the category of Hilbert spaces and unitary injections.

From construction of the $L_{m,n}$ we now have
\begin{theorem}[\cite{W2008a}, Theorem 13] \label{cor-symm-mfree}
The left regular representation of $G_\infty$ on $L^2(M_\infty)$
is a multiplicity free discrete direct sum of
irreducible representations.  Specifically, that left regular
representation is $\sum_{I \in \cI} \pi_I$ where
$\pi_I = \varinjlim \pi_{I,n}$ is the irreducible representation
of $G_\infty$ with highest weight $\xi_I := \sum k_r\xi_r$.  This
applies to all the direct systems of {\rm (\ref{e-infiniteDim})}.
\end{theorem}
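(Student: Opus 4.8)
The plan is to glue the finite--dimensional Peter--Weyl decompositions of the individual $L^2(M_n)$ into a single decomposition of the limit, using the explicit description of the connecting maps in Theorem \ref{fun-restriction}. First I would record, for each fixed $n$, the orthogonal decomposition
\[
L^2(M_n) = \bigoplus_{I\in (\Z^+)^{r_n}} V_{\mu_{I,n}}
\]
furnished by the Plancherel expansion (\ref{e-FourierExpansion}); the Cartan--Helgason Theorem \ref{t-CH} guarantees that each spherical class $\pi_{\mu_{I,n}}$ occurs exactly once, so the left regular representation of $G_n$ on $L^2(M_n)$ is already multiplicity free. The index set for the limit is $\cI=\varinjlim (\Z^+)^{r_n}$, the set of eventually--zero sequences $I=(k_1,k_2,\ldots)$ of nonnegative integers, where $(\Z^+)^{r_n}\hookrightarrow (\Z^+)^{r_m}$ sends $I$ to $(I,0_{r_m-r_n})$ in accordance with the labelling $I_k=(I_n,0_{r_k-r_n})$ of Theorem \ref{l-inductiveSystemOfRep}.

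Next, for each fixed $I\in\cI$ I would isolate the subsystem $\{V_{\mu_{I,n}}\}_n$, defined once $n$ is large enough that $I$ is supported in the first $r_n$ coordinates. By Theorem \ref{fun-restriction} the map $L_{m,n}$ carries $V_{\mu_{I,n}}$ injectively into $V_{\mu_{I,m}}$, and its image is precisely the sum of those summands of $L^2(M_m)$ whose new coordinates $k_{r_n+1},\ldots,k_{r_m}$ vanish; hence these restrictions are exactly the structure maps of a direct system of irreducible $G_n$--modules, whose limit by Theorem \ref{l-inductiveSystemOfRep} is the irreducible $G_\infty$--module $\pi_I=\varinjlim \pi_{I,n}$ of highest weight $\xi_I=\sum_r k_r\xi_r$. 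Since the $L_{m,n}$ respect the index $I$ summand by summand, the orthogonal decomposition is preserved along the whole system and the direct limit commutes with the orthogonal direct sum,
\[
L^2(M_\infty)=\varinjlim \bigoplus_{I} V_{\mu_{I,n}}
=\bigoplus_{I\in\cI}\varinjlim_n V_{\mu_{I,n}}
=\bigoplus_{I\in\cI}\pi_I ,
\]
as a Hilbert space direct sum of $G_\infty$--modules. Multiplicity freeness is then immediate, because distinct $I\in\cI$ carry distinct highest weights $\xi_I$ and each $\pi_I$ appears once; the compatible left regular $G_n$--actions, intertwined summand by summand by the $L_{m,n}$, assemble into the left regular action of $G_\infty=\varinjlim G_n$, and the argument applies verbatim to every series of (\ref{e-infiniteDim}).

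The step I expect to be the main obstacle is the interchange of $\varinjlim$ with the orthogonal direct sum in the Hilbert space category. This is exactly where the partial--isometry structure of Theorem \ref{fun-restriction} and the normalizing constants $c_{m,n,\mu}$ of (\ref{def-psi(m,n)}) must be controlled: one has to check that, restricted to each fixed $I$--summand, the connecting maps are compatible so that the summand limits $\pi_I$ are genuine closed $G_\infty$--invariant subspaces, that they stay mutually orthogonal inside $L^2(M_\infty)$, and that their span is dense. Once this bookkeeping is carried out --- which is precisely the content of the construction recorded in Theorem \ref{fun-restriction} and in \cite[Section 3]{W2008a} --- the decomposition and the multiplicity--one assertion follow formally.
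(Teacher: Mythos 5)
Your proposal is correct and follows essentially the same route as the paper, which simply invokes the construction of the partial isometries $L_{m,n}$ from Theorem \ref{fun-restriction} (each respecting the multiplicity--free Cartan--Helgason decomposition $L^2(M_n)=\bigoplus_I V_{\mu_{I,n}}$ summand by summand) and defers the remaining bookkeeping to \cite{W2008a}. Your expanded version --- finite-level multiplicity freeness, compatibility of the connecting maps with the $I$-grading, and the interchange of $\varinjlim$ with the orthogonal direct sum --- is exactly the argument the paper is citing.
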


The problem with the partial isometries $L_{m,n}$ is that they do not
work well with restriction of functions, because of the rescalings
and because $L_{m,n}(L^2(M_n)^{K_n}) \not\subset L^2(M_m)^{K_m}$
for $n < m$.
In particular the spherical functions $\psi_{I,n}(g) :=
\langle e_{I,n} ,\pi_{I,n}(g)e_{I,n})\rangle$ do not map
forward, in other words $L_{m,n}(\psi_{I,n})\not=\psi_{I,m}$.

We deal with this by viewing $L^2(M_\infty)$ as a Hilbert space completion
of the ring $\cA(M_\infty) := \varinjlim \cA(M_n)$ of regular functions on
$M_\infty$.  Adapting \cite[Section 3]{W2009} to our notation, we define
\begin{equation}\label{symm-quo-lim-coef}
\begin{aligned}
&\cA(\pi_{n,\mu})^{K_n} = \{\text{finite linear combinations of the }
  f_{\mu,I_n,w_I}
  \text{ where } w_I \in V_{n,\mu} \}, \\
&\nu_{m,n,\mu}:\cA(\pi_{n,\mu})^{K_n} \hookrightarrow
  \cA(\pi_{m,\mu})^{K_m} \text{ by }
        f_{\mu,I_n,w_I} \mapsto f_{\mu,I_m,w_I}\,\, .
\end{aligned}
\end{equation}
Thus \cite[Lemma 2.30]{W2009} says that if
$f \in \cA(\pi_{n,\mu})^{K_n}$ then $\nu_{m,n,\mu}(f)|_{M_n} = f$.

The ring of regular functions on $M_n$ is
$\cA(M_n) := \cA(G_n)^{K_n}
= \sum_\mu \cA(\pi_{n,\mu})$, and the $\nu_{m,n,\mu}$ sum to
define a direct system $\{\cA(M_n),\nu_{m,n}\}$.  Its limit is
\begin{equation}\label{symm-quo-lim-reg}
\cA(M_\infty) := \cA(G_\infty)^{K_\infty} =
\varinjlim \{\cA(M_n),\nu_{m,n}\}.
\end{equation}
As just noted, the maps of the direct system $\{\cA(M_n),\nu_{m,n}\}$
are inverse to restriction of functions, so $\cA(M_\infty)$ is a
$G_\infty$--submodule
of the inverse limit $\varprojlim \{\cA(M_n), \text{ restriction}\}$.

For each $n$,
$\cA(M_n)$ is a dense subspace of $L^2(M_n)$ but, because the
$\nu_{m,n}$ distort the Hilbert space structure,
$\cA(M_\infty)$ does not sit naturally as a subspace of $L^2(M_\infty)$.
Thus we use the $G_n$--equivariant maps
\begin{equation}\label{sym-rel-map-sys}
\eta_{n,\mu}: \cA(\pi_{n,\mu})^{K_n} \to
\cH_{\pi_n}\widehat{\otimes}(w_{n,\mu^*}\C) \text{ by }
f_{\mu,I_n,w_I} \mapsto c_{n,1,\mu}
\sqrt{\deg \pi_{n,\mu}}\, f_{\mu,I_n,w_I}.
\end{equation}
where $c_{m,n,\mu}$ is the length of the projection of $e_{m,\mu}$
to $V_{n,\mu}$.  Now \cite[Proposition 3.27]{W2009} says
\begin{proposition}\label{sym-quo-comparison}
The maps $L_{m,n,\mu}$ of {\rm (\ref{def-psi(m,n)})},
$\nu_{m,n,\mu}$ of {\rm (\ref{symm-quo-lim-coef})} and
$\eta_{n,\mu}$ of {\rm (\ref{sym-rel-map-sys})}
satisfy $$(\eta_{m,\mu}\circ \nu_{m,n,\mu})(f_{\mu,I_n,w_I}) =
(L_{m,n,\mu}\circ\eta_{n,\mu})(f_{\mu,I_n,w_I})$$
for $f_{u,v,n} \in \cA(\pi_{n,\mu})^{K_n}$.  Thus they inject
the direct system $\{\cA(M_n), \nu_{m,n}\}$
into the direct system $\{L^2(M_n),L_{m,n}\}$.
That map of direct systems defines a $G_\infty$--equivariant injection
$$
\widetilde{\eta}: \cA(M_\infty) \to L^2(M_\infty)
$$
with dense image.  In particular $\eta$ defines a pre Hilbert space
structure on $\cA(M_\infty)$ with completion isometric to $L^2(M_\infty)$.
\end{proposition}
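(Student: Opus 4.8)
The plan is to verify the displayed identity directly on the generators $f_{\mu,I_n,w_I}$ and then transport everything to the direct limit, the only genuine input being the multiplicativity of the projection constants $c_{m,n,\mu}$. Since this is the adaptation of \cite[Proposition 3.27]{W2009} to the present notation, the substance is the commuting-square computation; the limit statements are then formal.

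First I would evaluate both composites on a generator. Applying $\nu_{m,n,\mu}$ and then $\eta_{m,\mu}$ sends $f_{\mu,I_n,w_I}$ to $c_{m,1,\mu}\sqrt{\deg\pi_{m,\mu}}\,f_{\mu,I_m,w_I}$, while applying $\eta_{n,\mu}$ and then $L_{m,n,\mu}$ sends it to $c_{n,1,\mu}\,c_{m,n,\mu}\sqrt{\deg\pi_{m,\mu}}\,f_{\mu,I_m,w_I}$, because $L_{m,n,\mu}$ contributes the factor $c_{m,n,\mu}\sqrt{\deg\pi_{m,\mu}/\deg\pi_{n,\mu}}$ from (\ref{def-psi(m,n)}) and $\eta_{n,\mu}$ contributes $c_{n,1,\mu}\sqrt{\deg\pi_{n,\mu}}$ from (\ref{sym-rel-map-sys}). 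Thus the identity is equivalent to the single scalar relation $c_{m,1,\mu}=c_{n,1,\mu}\,c_{m,n,\mu}$, which is exactly the telescoping factorization $c_{m,n,\mu}=c_{m,m-1,\mu}\cdots c_{n+1,n,\mu}$ recorded in the construction of the vectors $e_{n,\mu}$ in Section \ref{sec8}. This settles the commuting-square claim.

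With the squares commuting, the $\eta_{n,\mu}$ sum over $\mu\in\Lambda_n^+$ to $G_n$-equivariant maps $\eta_n:\cA(M_n)\to L^2(M_n)$ that intertwine the two direct systems $\{\cA(M_n),\nu_{m,n}\}$ and $\{L^2(M_n),L_{m,n}\}$, so by the universal property of the direct limit they induce a single $G_\infty$-equivariant map $\widetilde{\eta}:\cA(M_\infty)\to L^2(M_\infty)$. For injectivity I would note that on each isotypic piece $\eta_{n,\mu}$ is the nonzero scalar $c_{n,1,\mu}\sqrt{\deg\pi_{n,\mu}}$ times the embedding $v\mapsto f_{n,\mu,v}$, which is injective by the Frobenius--Schur relations; hence each $\eta_n$ is injective, and injectivity passes to the limit since the structure maps $\nu_{m,n}$ and $L_{m,n}$ are injective.

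For density I would argue that the image of $\eta_n$ equals $\cA(M_n)$ as a subset of $L^2(M_n)$ (each isotypic component is only rescaled by a nonzero constant), hence is dense in $L^2(M_n)$ by Theorem \ref{t-CH} and Peter--Weyl. Writing $L_{\infty,n}:L^2(M_n)\to L^2(M_\infty)$ and $\nu_{\infty,n}:\cA(M_n)\to\cA(M_\infty)$ for the canonical maps, the relation $\widetilde{\eta}\circ\nu_{\infty,n}=L_{\infty,n}\circ\eta_n$ gives $\widetilde{\eta}(\cA(M_\infty))=\bigcup_n L_{\infty,n}(\eta_n(\cA(M_n)))$, and since $L_{\infty,n}$ is an isometry this set has closure containing $\bigcup_n L_{\infty,n}(L^2(M_n))$, which is dense in $L^2(M_\infty)$ by the construction of the direct limit in Theorem \ref{fun-restriction}. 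The final assertion is then immediate: pulling back the Hilbert norm along the injective $\widetilde{\eta}$ gives a pre-Hilbert structure on $\cA(M_\infty)$ whose completion is the closure of the dense image, namely $L^2(M_\infty)$. The only point requiring care is not any single step but the simultaneous compatibility of $\eta$ with both direct systems, which is precisely why all bookkeeping collapses to the one cocycle identity $c_{m,1,\mu}=c_{n,1,\mu}\,c_{m,n,\mu}$; the density then rests entirely on the defining property of the direct-limit Hilbert space rather than on any new estimate.
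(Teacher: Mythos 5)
Your proposal is correct and is essentially the intended argument: the paper itself does not prove this proposition but simply cites \cite[Proposition 3.27]{W2009}, and your direct verification on generators, reducing the commuting square to the telescoping identity $c_{m,1,\mu}=c_{n,1,\mu}\,c_{m,n,\mu}$ built into the definition of the constants $c_{m,n,\mu}$, is exactly the computation behind that cited result. The passage to the limit (injectivity from injectivity of the $\eta_n$ and of the structure maps, density from density of $\cA(M_n)$ in each $L^2(M_n)$ together with the defining property of the direct-limit Hilbert space) is handled correctly and formally, as it should be.
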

This describes $L^2(M_\infty)$ as an ordinary Hilbert space completion of
a natural function space on $M_\infty$.


\begin{thebibliography}{XXX}

\bibitem{Ar1962}
S. Araki,
On root systems and an infinitesimal classification of irreducible
symmetric spaces,
J. Math. Osaka City Univ. {\bf 13} (1962), 1--34.

\bibitem{BOP2005}
T. Branson, G. {\'O}lafsson, and A. Pasquale:
The Paley--Wiener theorem and the local Huygens' principle for compact
symmetric spaces: the even multiplicity case.
\textit{Indag. Math. (N.S.)}  \textbf{16}  (2005),  no. 3--4, 393--428.

\bibitem{cowling} M. Cowling, On the Paley--Wiener theorem, \textit{Invent. Math.}
\textbf{83} (1986), 403--404.

\bibitem{crit}
R. Crittenden,
Minimum and conjugate points in symmetric spaces,
\textit{Canadian J. Math.} \textbf{14} (1962), 320--328.

\bibitem{Ga1971} R. Gangolli, On the Plancherel formula and the Paley--Wiener theorem for spherical
functions on semisimple Lie groups.  \textit{Ann. of Math.} (2)
\textbf{93} (1971),  150--165.

\bibitem{Gon}
F. B. Gonzalez,
A Paley--Wiener theorem for central functions on compact
Lie groups, {\it
Contemp. Math.} {\bf 278} (2001), 131--136.

\bibitem{GW1998} R. Goodman, and N. R. Wallach,
\textit{Representations and invariants of the classical groups}, Encyclopedia
of Mathematics and its Applications, \textbf{68}.
Cambridge Univ. Press, Cambridge, 1998.

\bibitem{He1964} S. Helgason, Fundamental solutions of
invariant differential operators on symmetric spaces.
\textit{Amer. J. Math.}  \textbf{86}  1964 565--601.

\bibitem{He1966} \bysame , An analog of the Paley--Wiener theorem for the Fourier transform
 on certain symmetric spaces, \textit{Math. Ann.} \textbf{165} (1966), 297--308.

\bibitem{He1992} \bysame Some results on invariant differential operators on symmetric spaces.
Amer. J. Math.  \textbf{114}  (1992),  no. 4, 789--811.

\bibitem{He1978} \bysame , \textit{Differential Geometry, Lie Groups,
and Symmetric Spaces}. Academic Press, 1978.

\bibitem{He1984} \bysame , \textit{Groups and Geometric Analysis},
Academic Press, 1984.

\bibitem{He1994} \bysame , \textit{Geometric Analysis on Symmetric Spaces},
Math. Surveys Monogr. \textbf{39}, Amer. Math. Soc. Providence, RI 1994.

\bibitem{KS2005} B. Kr\"otz, and R. Stanton, R. J. ,
Holomorphic extensions of representations. II.
Geometry and harmonic analysis.  \textit{Geom. Funct. Anal.}  \textbf{15}  (2005),  no. 1, 190--245.

\bibitem{OP2004} G. {\'O}lafsson, and A. Pasquale,
Paley--Wiener theorems for the $\Theta$--spherical
transform: an overview.  \textit{Acta Appl. Math.}  \textbf{81}  (2004),  no. 1--3, 275--309.

\bibitem{OS} G. \'Olafsson and H. Schlichtkrull, A local Paley--Wiener
theorem for compact symmetric spaces,
\textit{Adv. Math.} \textbf{218} (2008) 202--215.

\bibitem{OW} G. \'Olafsson and K. Wiboonton,
The heat equation on inductive limits of compact symmetric
spaces. In preparation.

\bibitem{Opd}
E. Opdam,
Harmonic analysis for certain representations of
graded Hecke algebras, \textit{Acta Math.} \textbf{175} (1995), 75--121.

\bibitem{Rais}
M. Rais, Groupes lin\'eaires
compacts et fonctions
$C^\infty$ covariantes, {\it
Bull. Sc. Math.} {\bf 107} (1983), 93--111.

\bibitem{V1974}
V. S. Varadarajan, \textit{Lie Groups, Lie Algebras, and Their Representations}, Prentice--Hall, 1974.

\bibitem{KW2009} K. Wiboonton, Thesis.
\textit{The Segal-Bargmann Transform on Inductive Limits of Compact Symmetric 
Spaces}, LSU doctoral dissertation, 2009.

\bibitem{W2008a}
J. A. Wolf, Infinite dimensional multiplicity free spaces I:
Limits of compact commutative spaces. In ``Developments and Trends in Infinite
Dimensional Lie Theory, ed. K.-H. Neeb \& A. Pianzola, Birkh\" auser,
to appear in 2009.
\{arXiv:0801.3869 (math.RT, math.DG).\}

\bibitem{W2008b}
\bysame ,
Infinite dimensional multiplicity free spaces II:
Limits of commutative nilmanifolds, Contemporary Mathematics,
vol. 491 (2009), pp. 179--208. \{arXiv:0801.3866 (math.RT, math.DG).\}

\bibitem{W2009}
\bysame ,
Infinite dimensional multiplicity free spaces III: Matrix coefficients
and regular function, to appear.  \{arXiv:0909.1735 (math.RT, math.DG).\}

\end{thebibliography}
\end{document}